\def\Bcal{{\mathcal{B}}}
\def\Ccal{{\mathcal{C}}}
\def\Mcal{{\mathcal{M}}}
\def\Scal{{\mathcal{S}}}
\def\E{\mathbb{E}}
\def\N{\mathbb{N}}
\def\P{\mathbb{P}}
\def\R{\mathbb{R}}
\def\offdistr{q}
\def\d{\mathrm{d}}
\def\refAone{{\normalfont (A\ref{eq:cond-A1})}}
\def\refAtwo{{\normalfont (A\ref{eq:cond-A2})}}
\newcommand{\indicator}[1]{\mathbbm{1}_{\{#1\}}}
\newcommand{\bottleneck}{\wp}
\newcommand{\tv}[1]{\lVert #1 \rVert}
\theoremstyle{plain}
\newtheorem{assumption}{Assumption}
\newtheorem{Thm}{Theorem}[section]
\newtheorem{Lem}[Thm]{Lemma}
\newtheorem{Prop}[Thm]{Proposition}
\newtheorem*{condition}{Condition}
\theoremstyle{remark}
\newtheorem{Rk}{Remark}[section]
\numberwithin{equation}{section}
\title[Scaling limits of Galton Watson processes in varying environment]{On the scaling limits of Galton Watson processes in varying environment}
\date{\today}
\author[V.\ Bansaye]{Vincent Bansaye$^1$}
\email{vincent.bansaye@polytechnique.edu}
\address{$^1$CMAP, Ecole Polytechnique, Route de Saclay\\
91128 Palaiseau Cedex, France.}
\author[F.\ Simatos]{Florian Simatos$^2$}
\email{florian.simatos@inria.fr}
\address{$^2$Inria Paris-Rocquencourt and ENS Paris, 23 avenue d'Italie, CS 81321, 75214 Paris Cedex 13, France}
\thanks{This work was funded by project MANEGE ``Mod\`eles
Al\'eatoires en \'Ecologie, G\'en\'etique et \'Evolution''
09-BLAN-0215 of ANR (French national research agency), Chair Mod\'elisation Math\'ematique et Biodiversit\'e VEOLIA-Ecole Polytechnique-MNHN-F.X.\ and the professoral chair Jean Marjoulet. While most of this work was carried out, the second author was affiliated with CWI and his research was funded by an NWO-VIDI grant.}
\begin{document}

\maketitle
\begin{abstract}
	We establish a general sufficient condition for a sequence of Galton Watson branching processes in varying environment to converge weakly. This condition extends previous results by allowing offspring distributions to have infinite variance, which leads to new and subtle phenomena when the process goes through a bottleneck and also in terms of time scales.
	
	Our assumptions are stated in terms of pointwise convergence of a triplet of two real-valued functions and a measure. The limiting process is characterized by a backwards ordinary differential equation satisfied by its Laplace exponent, which generalizes the branching equation satisfied by continuous state branching processes. Several examples are discussed, namely branching processes in random environment, Feller diffusion in varying environment and branching processes with catastrophes.
\end{abstract}

\setcounter{tocdepth}{1}

\bigskip
\hrule
\vspace{-2mm}
\tableofcontents
\vspace{-8mm}
\hrule

\newpage

\section{Introduction}

In this paper we extend previous results on scaling limits of Galton Watson branching processes in varying environment. More precisely, for each $n \geq 1$ we consider a sequence of offspring distributions $({\offdistr}_{i,n}, i \geq 0)$, the \emph{environment}, and we consider the Galton Watson process $Z_n = (Z_{i,n}, i \geq 0)$ where individuals of the $i$-th generation reproduce according to ${\offdistr}_{i,n}$. We are interested in the asymptotic behavior of the sequence $(X_n, n \geq 1)$ of rescaled processes of the form $X_n(t) = n^{-1} Z_{\gamma_n(t), n}$ for some sequence of time-changes $\gamma_n$.

In the Galton Watson case where ${\offdistr}_{i,n} = {\offdistr}_{0,n}$, this problem has been first considered by Feller~\cite{Feller51:0} and later by Lamperti~\cite{Lamperti67:0, Lamperti67:2}, and was solved by Grimvall~\cite{Grimvall74:0}. In this case, the assumptions that we make for our main result are equivalent to Grimvall's necessary and sufficient ones. Moreover, in this case, the structure of the possible limit processes, called continuous state branching processes (CSBP) and first considered by Ji{\v r}ina~\cite{Jirina58:0}, is well understood thanks to a random time-change transformation exhibited by Lamperti~\cite{Lamperti67:1}, and sometimes called Lamperti transformation.

In the case of varying environment, results are to our knowledge much scarcer and the main results seem to be due to Kurtz~\cite{Kurtz78:0} and Borovkov~\cite{Borovkov02:0}, to which our main theorem will be compared in details in Section~\ref{sub:comparison}. The two main points are that: (1) on the upside, we extend these results to the case of offspring distributions with (possibly) infinite variance; (2) on the downside, we assume that a certain function has locally finite variation. This finite variation assumption may seem restrictive, but it is \emph{intrinsic to our approach}: if it does not hold, fundamentally different techniques than the ones we present here are needed in order to characterize accumulation points of the sequence $(X_n, n \geq 1)$, see the discussion in Section~\ref{sub:comparison}.
\\

In the case ${\offdistr}_{i,n} = {\offdistr}_{0,n}$ of constant environment, an elegant way to study the asymptotic behavior of a sequence of rescaled Galton Watson processes is to extend the Lamperti transformation at the discrete level and leverage results on the convergence of random walks and on the continuity of random time-change maps, see for instance Ethier and Kurtz~\cite[Chapter~$9$]{Ethier86:0} or, in the continuous-time setting, Helland~\cite{Helland78:0}. Nonetheless, this approach breaks down when offspring distributions vary from one generation to the other, as we discuss in Section~\ref{sub:comparison}.

When offspring distributions vary but have finite variance, the authors in~\cite{Borovkov02:0, Kurtz78:0} express their limit process as a simple transformation of Feller diffusion, the only CSBP with continuous sample paths. This allows them to leverage results for continuous diffusion processes. Kurtz~\cite{Kurtz78:0} for instance uses semigroup techniques developed in~\cite{Kurtz75:0}. However, these techniques become significantly more demanding in the infinite variance case considered here, where one needs to consider diffusion processes with jumps.

So we use a third approach, which, similarly as in Grimvall~\cite{Grimvall74:0}, is based on the convergence of Laplace exponents. In the Galton Watson case, CSBP's can be characterized by an ordinary differential equation, called \emph{branching equation}, satisfied by their Laplace exponent, see for instance Silverstein~\cite{Silverstein67:0} or Caballero et al.~\cite{Caballero09:0} for a recent (and complete) treatment. In the present time-inhomogeneous case, we show that accumulation points can be characterized by an ordinary differential equation that generalizes the branching equation. Interestingly, this generalized branching equation is then backwards in time.

This approach makes it possible to deal with a complication that does not exist in the finite variance case studied previously. Indeed, in the infinite variance case, the limiting processes may be non-conservative, i.e., may explode in finite time. Moreover, since offspring distributions are allowed to vary over time, nothing prevents a \emph{catastrophic} environment to occur, where the mean number of children is close to $0$. When a non-conservative process goes through such a bottleneck, we (intuitively) run into an indetermination of the kind $\infty \times 0$. To deal with this complication we introduce a new notion of \emph{bottleneck} which plays a crucial role throughout the analysis. This peculiar phenomenon is discussed in more depth in Section~\ref{sub:bottleneck}.
\\

Let us now mention some closely related results. Galton Watson processes in \emph{random} environment were first introduced and studied in Smith and Wilkinson~\cite{Smith69:0} in the case where the sequence $({\offdistr}_{i,n}, i \geq 0)$ is i.i.d., and in Athreya and Karlin~\cite{Athreya71:1, Athreya71:0} when this sequence is stationary. These models have recently attracted considerable interest in the literature, see for instance~\cite{Afanasyev10:1, Afanasyev05:0, Bansaye09:0, Birkner05:0, Boinghoff10:0, Boinghoff12:0, Geiger03:0, Guivarch01:0} for results on the long-time behavior in the critical and subcritical regimes and on large deviation. Scaling limits in the finite variance case were conjectured by Keiding~\cite{Keiding75:0} who introduced Feller diffusion in random environment. This conjecture was proved by Kurtz~\cite{Kurtz78:0} and Helland~\cite{Helland81:0}. In particular, Kurtz' results establish \emph{quenched} results since, as mentioned above, his results apply to a deterministic sequence of varying offspring distributions. In the same way, our results apply to an almost sure realization of an i.i.d.\ sequence of offspring distributions. Since these offspring distributions may have infinite variance, we end up with a process more general, in some aspects, than Feller diffusion in random environment. We describe the probabilistic structure of this process in Section~\ref{subsub:GWRE}, and we believe that this constitutes an interesting object for future work. Moreover, our results shed light on a subtle question related to the correct renormalization in time of such processes which is discussed in Section~\ref{subsub:GWRE}.

Our results are also related to some results on superprocesses. More precisely, our limit processes are closely related to the mass of superprocesses considered in El Karoui and Roelly~\cite{El-Karoui91:0}. Under some additional technical assumptions, e.g., finite first moment and no drift, these superprocesses are obtained in Dynkin~\cite{Dynkin91:0, Dynkin93:0} as the limit of suitable branching particle systems. However, in these works the emphasis is on the limiting superprocesses themselves. As such, Dynkin~\cite{Dynkin91:0, Dynkin93:0} considers branching particle systems evolving in continuous time, which, in order to establish limit theorems, are technically more convenient than the discrete time setting considered here. Our motivation here is different, since we want to understand the asymptotic behavior of Galton Watson processes, even in the non-conservative case and in the presence of a drift term.

\subsection*{Organization of the paper} Theorem~\ref{thm:main} is the main result of the paper, and is presented in Section~\ref{sub:main-result}. We compare it with earlier results in Section~\ref{sub:comparison} and discuss some applications in Section~\ref{sub:applications}, namely to Galton Watson processes in random environment, to Feller diffusion in varying environment and to CSBP with catastrophes. Section~\ref{sec:additional-notation-preliminary-results} introduces further notation that are used throughout the rest of the paper, as well as some preliminary results. Theorem~\ref{thm:main} is proved in Section~\ref{proofThm1}, with some technical proofs deferred to Appendices~\ref{appendix:constants} and~\ref{appendix:proof-lemma}. Further results that complement Theorem~\ref{thm:main} are proved in Section~\ref{sec:proof-propositions}, and Appendix~\ref{appendix:proof-GW} is devoted to checking that the assumptions of Theorem~\ref{thm:main} are necessary and sufficient in the Galton Watson case.

\subsection*{Acknowledgements.} We thank Julien Berestycki for interesting discussions at the earliest stage of this work.

\section{Notation and results} \label{sec:notation}

\subsection{General notation}

We first introduce the minimal set of notation needed in order to state our results, the rest of the notation will be introduced in Section~\ref{sub:additional-notation}. In the rest of the paper, if a function $g$ defined on $[0,\infty)$ is c\`adl\`ag, we implicitly assume that it is locally bounded and we write $\Delta g(t) = g(t) - g(t-)$ for the value of the jump of $g$ at time~$t$. If $g$ is in addition of locally finite variation, we write $\tv{g}(t)$ for the total variation of $g$ on $[0,t]$ and $\int f d g$ for the Lebesgue-Stieltjes integral of a measurable function $f$; note that $\lvert\int f dg\rvert \leq \int \lvert f\rvert d\tv{g}$. Moreover, we say that a function $f$ is increasing if $f(x) \geq f(y)$ for every $x \geq y$.
\\

For each $n \geq 1$, we consider a Galton Watson process in varying environment $Z_n = (Z_{i,n}, i \geq 0)$. We denote by ${\offdistr}_{i,n}$ the offspring distribution in generation $i$ and $\xi_{i,n}$ a random variable distributed according to ${\offdistr}_{i,n}$, so that we can construct $Z_n$ according to the following recursion:
\[ Z_{i+1,n} = \sum_{k=1}^{Z_{i,n}} \xi_{i,n}(k), \ i \geq 0, \]
where the random variables $(\xi_{i,n}(k), i, k \geq 0)$ are independent and $\xi_{i,n}(k)$ is equal in distribution to $\xi_{i,n}$. In order to find an interesting renormalization of the sequence of processes $(Z_n, n \geq 1)$, we fix the space scale equal to $n$ while the time scale is allowed to vary over time. More precisely, for $n \geq 1$, we consider an increasing, c\`adl\`ag and onto function $\gamma_n : [0,\infty) \to \N$ (here and elsewhere, $\N = \{0, 1, \ldots\}$ denotes the set of non-negative integers) and we define the renormalized process $(X_n(t), t \geq 0)$ as follows:
\[ X_n(t) = \frac{1}{n} Z_{\gamma_n(t),n}, \ t \geq 0. \]

For $i \geq 0$ and $n \geq 1$, we define $t_i^n = \inf\{ t \geq 0: \gamma_n(t) = i \}$ so that $\gamma_n(t_i^n) = i$ and $t_{\gamma_n(t)}^n \leq t < t_{\gamma_n(t)+1}^n$. Since $Z_n$ satisfies the branching property, i.e., $Z_n$ started from $Z_{0,n} = z$ is stochastically equivalent to the sum of $z$ i.i.d.\ processes distributed according to $Z_n$ started from $Z_{0,n} = 1$, we obtain after scaling
\begin{equation} \label{eq:def-u}
	\E\left[ \exp\left(-\lambda X_n(t) \right) \ \vert \ X_n(s) = x \right] = \exp(-x u_n(s,t,\lambda))
\end{equation}
for all $\lambda, x, s, t \geq 0$ with $s \leq t$ and $u_n(s,t,\lambda) \geq 0$ called the Laplace exponent. We will characterize the convergence of $X_n$ through the convergence of $u_n$. Our assumptions will be stated in terms of the convergence of the triplet $(\alpha_n, \beta_n, \nu_n)$, where $\alpha_n$ and $\beta_n$ are real-valued functions and $\nu_n$ is a measure on $\R \times [0,\infty)$, see Assumption~\ref{assumptions} below. This triplet is defined in terms of the renormalized random variables
\[ \overline \xi_{i,n} = \frac{1}{n} \left(\xi_{i,n} - 1\right), \ i \geq 0, n \geq 1, \]
in the following way: for $t \geq 0$,
\[ \alpha_n(t) = n \sum_{i=0}^{\gamma_n(t)-1} \E \left( \frac{\overline \xi_{i,n}}{1+\overline \xi_{i,n}^2} \right) \ \text{ and } \ \beta_n(t) = \frac{1}{2} n \sum_{i=0}^{\gamma_n(t)-1} \E \left( \frac{\overline \xi_{i,n}^2}{1+\overline \xi_{i,n}^2} \right), \]
and for $t \geq 0$ and $x \in \R$,
\[ \nu_n([x,\infty) \times (0,t]) = n \sum_{i=0}^{\gamma_n(t)-1} \P\big( \overline \xi_{i,n} \geq x \big). \]

Each time, we understand a sum of the form $\sum_0^{-1}$ to be equal to $0$, so that $\alpha_n(0) = \beta_n(0) = 0$. It will be convenient to introduce the numbers
\[ \alpha_{i,n} = n \E \left( \frac{\overline \xi_{i,n}}{1+\overline \xi_{i,n}^2} \right) \ \text{ and } \ \beta_{i,n} = \frac{1}{2} n \E \left( \frac{\overline \xi_{i,n}^2}{1+\overline \xi_{i,n}^2} \right) \]
and the measures $\nu_{i,n}([x,\infty)) = n \P( \overline \xi_{i,n} \geq x)$, so that we can write
\[ \alpha_n(t) = \sum_{i=0}^{\gamma_n(t)-1} \alpha_{i,n}, \ \beta_n(t) = \sum_{i=0}^{\gamma_n(t)-1} \beta_{i,n} \ \text{ and } \ \nu_n([x,\infty) \times (0,t]) = \sum_{i=0}^{\gamma_n(t)-1} \nu_{i,n}([x,\infty)). \]

\subsection{Main result} \label{sub:main-result}

The main result of the paper, Theorem~\ref{thm:main}, will hold under the following assumptions on $\alpha_n$, $\beta_n$ and $\nu_n$. As mentioned in the introduction, our approach relies on controlling the asymptotic behavior of the Laplace exponent $u_n$. We exhibit in~\eqref{eq:dynamics-u-n} a discrete dynamical system satisfied by $u_n$ where $\alpha_n$, $\beta_n$ and $\nu_n$ appear, which makes these assumptions natural in order to make this discrete dynamical system converge to a continuous one.

Moreover, these assumptions have the advantage, from a modeling perspective, to highlight the characteristics of the successive reproduction laws that play a key role with respect to scaling limits. These are, namely, the first and second truncated moments and the tail distributions.

Finally, in addition to help understanding scaling limits, these assumptions also make it possible to get fine results on the qualitative behavior of the limiting process (see namely Propositions~\ref{prop:no-bottleneck},~\ref{prop:no-explosion} and~\ref{prop:Feller}).

\begin{assumption} \label{assumptions}
	There exist a c\`adl\`ag function of locally finite variation $\alpha$, an increasing c\`adl\`ag function $\beta$, and a positive measure $\nu$ on $\R \times [0,\infty)$ with support included on $(0,\infty) \times (0,\infty)$, such that the two following conditions hold:
	
	\begin{enumerate}[\normalfont ({A}1)]
		\item \label{eq:cond-A1} For every $t \geq 0$ and every $x > 0$ such that $\nu(\{x\} \times (0,t])=0$,
		\begin{multline*}
			\alpha_n(t) \mathop{\longrightarrow}_{n \to +\infty} \alpha(t), \, \tv{\alpha_n}(t) \mathop{\longrightarrow}_{n \to +\infty} \tv{\alpha}(t), \, \beta_n(t) \mathop{\longrightarrow}_{n \to +\infty} \beta(t)\\
			\text{and } \, \nu_n([x, \infty) \times (0,t]) \mathop{\longrightarrow}_{n \to +\infty} \nu([x, \infty) \times (0,t]).
		\end{multline*}
		\item \label{eq:cond-A2} For every $t$ such that either $\Delta \alpha(t) \ne 0$, $\Delta\beta(t) \ne 0$ or $\nu((0,\infty)\times\{t\}) \ne 0$ and for every $x > 0$ such that $\nu(\{x\} \times \{t\})=0$,
		\[ \alpha_{\gamma_n(t), n} \mathop{\longrightarrow}_{n \to +\infty} \Delta \alpha(t), \, \beta_{\gamma_n(t), n} \mathop{\longrightarrow}_{n \to +\infty} \Delta \beta(t) \text{ and } \nu_{\gamma_n(t), n}([x,\infty))\mathop{\longrightarrow}_{n \to +\infty} \nu( [x,\infty)\times \{t\}). \]
	\end{enumerate}
\end{assumption}

As will be seen shortly, our limit processes can explode in finite time; this is already the case for CSBP's, see, e.g., Grey~\cite{Grey74:0}. However, because in our framework a highly subcritical offspring distribution may occur, we may run into an indetermination of the kind $\infty \times 0$ if the process passes through what will be referred to as a \emph{bottleneck}, see the example given in Section~\ref{sub:bottleneck}. In order to deal with this possible complication, our main result concerns the behavior of the process on the time interval $[\bottleneck(t), t]$, where $\bottleneck(t)$ is the time of last bottleneck before time $t$:
\begin{equation} \label{eq:def-sigma}
	\bottleneck(t) = \sup \left\{s \leq t : \lim_{\varepsilon \to 0} \liminf_{n\rightarrow\infty} \inf_{s \leq y \leq t} \P\left(X_n(t) > \varepsilon \ \big \vert \ X_n(y)=1 \right) = 0\right\}
\end{equation}
with the convention $\sup \emptyset = 0$. This definition of the bottleneck is the most technically convenient at this point: an alternative, more intuitive definition is given in Lemma~\ref{lemma:inf}.

\begin{Thm}[Behavior on $[\bottleneck(t),t\rbrack$] \label{thm:main}
	Assume that Assumption~\ref{assumptions} holds, and let $\alpha$, $\beta$ and $\nu$ the functions and measure defined there. Then, the following properties hold. 
	\begin{enumerate}[\normalfont I.]
		\item \label{property:triplet} For every $t \geq 0$, we have $\Delta \alpha(t) \geq -1$ and $\int_{(0,\infty) \times (0,t]} (1\wedge x^2) \nu(dx \, dy) < +\infty$. Moreover, the following function $\widetilde \beta$ is continuous and increasing:
		\[ \widetilde \beta(t)= \beta(t) - \int_{(0,\infty) \times (0,t]} \frac{x^2}{2(1+x^2)} \nu (dx \, dy), \ t \geq 0. \]
		
		\item \label{property:u} For every $t, \lambda > 0$ and $s \in [\bottleneck(t), t]$, there exists $u(s,t,\lambda) \in (0,\infty)$ such that 
		\[ \lim_{n \to +\infty} u_n(s,t,\lambda) = u(s,t,\lambda). \]
		
		Moreover, the function $u_{t,\lambda} : s \in [\bottleneck(t),t] \mapsto u (s,t,\lambda)$ is the unique c\`adl\`ag function that satisfies $\inf_{s \leq y \leq t} u_{t, \lambda}(y) > 0$ for every $\bottleneck(t) < s \leq t$ and
		\begin{multline} \label{eq:dynamics-u}
			u_{t,\lambda}(s) = \lambda + \int_{(s,t]} u_{t,\lambda}(y) \alpha(dy) - \int_{(s,t]} u_{t,\lambda}(y)^2 \widetilde \beta(dy) \\
			+ \int_{(0,\infty) \times (s,t]} \left( 1 - e^{-x u_{t,\lambda}(y)} - \frac{x u_{t,\lambda}(y)} {1+x^2} \right) \nu (dx \, dy)
		\end{multline}
		for every $\bottleneck(t) \leq s \leq t$.
		
		\item \label{property:fdd-convergence} Fix $t \geq 0$, $s \in [\bottleneck(t), t]$ and $x \geq 0$. Then for every sequence of initial states $(x_n)$ with $x_n \to x$, every $I \geq 1$, every $s \leq t_1 < \cdots < t_I \leq t$ and every $\lambda_1, \ldots, \lambda_I > 0$,
		\begin{multline} \label{eq:fdd}
		\lim_{n \to +\infty} \E \left[ \exp \left( -\lambda_1 X_n(t_1) - \cdots -\lambda_I X_n(t_I) \right) \mid X_n(s) = x_n \right] \\
		= \exp\bigg(-x u\Big(s,t_1,\lambda_1+u\big(t_1,t_2,\lambda_2+u(\cdots,u(t_{I-1},t_I,\lambda_I)\cdots)\big)\Big)\bigg).
		\end{multline}
		\item \label{property:tightness} Fix $t \geq 0$, $s \in [\bottleneck(t), t]$ and $x \geq 0$. Then for every sequence of initial states $(x_n)$ with $x_n \to x$, the sequence of processes $(X_n(y), s \leq y \leq t)$ under $\P(\, \cdot \mid X_n(s) = x_n)$ is tight on the space $D([s, t], [0,\infty])$ of c\`adl\`ag functions $f: [s, t] \to [0,\infty]$ endowed with the Skorohod topology, where the space $[0,\infty]$ is equipped with the metric $d(x,y) = \lvert e^{-x} - e^{-y}\rvert$. In particular, weak convergence holds in view of~\eqref{eq:fdd}.
	\end{enumerate}
\end{Thm}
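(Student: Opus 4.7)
The plan has four pieces matching Properties I--IV.

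\emph{Part I.} The bound $\Delta\alpha(t)\geq -1$ follows from $\xi_{i,n}\geq 0$, which forces $\overline\xi_{i,n}\geq -1/n$ and hence $\alpha_{i,n}\geq -1$; by \refAtwo{} this inequality passes to the jump of $\alpha$. The integrability of $(1\wedge x^2)$ against $\nu$ and the continuity and monotonicity of $\widetilde\beta$ come from the algebraic identity
$$\beta_{i,n}=\frac12\int_{(0,\infty)}\frac{x^2}{1+x^2}\,\nu_{i,n}(dx)+\frac12 n\,\E\!\left[\frac{\overline\xi_{i,n}^2}{1+\overline\xi_{i,n}^2}\,\mathbbm{1}_{\{\overline\xi_{i,n}\le 0\}}\right],$$
which splits $\beta_{i,n}$ into a positive-side part (matching the second term of $\widetilde\beta$) and a negative-side part, bounded by $1/2$, that is easily controlled in the limit and produces a continuous increasing contribution.

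\emph{Part II.} This is the heart of the proof. Iterating the branching property one generation at a time gives a discrete recursion
$$u_n(t_i^n,t_j^n,\lambda)=\mu-n\log \E\!\left[e^{-\mu\,\overline\xi_{i,n}}\right],\qquad \mu:=u_n(t_{i+1}^n,t_j^n,\lambda),$$
and a L\'evy--Khintchine type expansion of the right-hand side in terms of $\alpha_{i,n}$, $\beta_{i,n}$ and $\nu_{i,n}$ reveals the discrete analogue \eqref{eq:dynamics-u-n} of \eqref{eq:dynamics-u}. To pass to the limit on an interval $[s,t]$ with $s>\bottleneck(t)$, the plan is to exploit the very definition of $\bottleneck(t)$ together with the identity $e^{-u_n(y,t,\lambda)}=\E[e^{-\lambda X_n(t)}\mid X_n(y)=1]$ to derive a uniform lower bound $\inf_{s\leq y\leq t}u_n(y,t,\lambda)\geq c(s)>0$. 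This bound allows one to control the non-linear integrand $1-e^{-xu}-xu/(1+x^2)$ uniformly in $n$ and, combined with the pointwise convergences in \refAone/\refAtwo{} through a Helly-type weak convergence argument, to show that every accumulation point of $u_n(\cdot,t,\lambda)$ solves \eqref{eq:dynamics-u}. Uniqueness of a c\`adl\`ag solution bounded away from $0$ is then obtained by a Gronwall argument, using that the maps $u\mapsto u^2$ and $u\mapsto\int(1-e^{-xu}-xu/(1+x^2))\,\nu(dx,dy)$ are locally Lipschitz on any $[c,C]\subset(0,\infty)$. The purely technical step---bounding uniformly in $n$ the error between the discrete recursion and the integral equation---is Lemma~\ref{reste}, whose proof is postponed to Appendix~\ref{appendix:proof-lemma}.

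\emph{Part III.} Finite-dimensional convergence is a direct consequence of Part II and the Markov property of $Z_n$. Peeling the Laplace functional $\E[\exp(-\sum_j\lambda_j X_n(t_j))\mid X_n(s)=x_n]$ from $t_I$ back to $t_1$ via \eqref{eq:def-u} produces exactly the nested composition of Laplace exponents appearing on the right-hand side of \eqref{eq:fdd}, and Part II identifies each nested limit. One must only verify that the intermediate arguments $\lambda_j+u_n(t_j,t_{j+1},\cdots)$ stay in a compact subset of $(0,\infty)$ so that the convergence of $u_n$ is preserved under composition, which again relies on the uniform bounds established in Part II.

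\emph{Part IV.} Because $[0,\infty]$ is compact under $d(x,y)=|e^{-x}-e^{-y}|$, tightness in $D([s,t],[0,\infty])$ reduces to an Aldous-type modulus-of-continuity estimate, which can be read off from the integral equation for $u_n$ together with the uniform bounds established in Part II, yielding a uniform modulus of continuity for $y\mapsto\E[e^{-\lambda X_n(y)}\mid X_n(s)=x_n]$. I expect this to be the main technical obstacle, because accumulations of mass of $\beta_n$ or $\nu_n$ near a discontinuity of the triplet $(\alpha,\beta,\nu)$ force a careful splitting between ``small-jump'' and ``large-jump'' contributions of the environment. Once Aldous' criterion is verified, weak convergence on $D([s,t],[0,\infty])$ follows by combining it with the finite-dimensional convergence of Part III.
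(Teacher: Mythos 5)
Your Parts~I--III follow the paper's architecture quite closely: the discrete recursion for $u_n$ (Lemma~\ref{relun}), the uniform lower bound past the bottleneck (Lemma~\ref{lemma:inf}), a Gronwall/Lipschitz argument for both existence of the limit and uniqueness of the solution of~\eqref{eq:dynamics-u}, and Markov-property peeling for finite-dimensional convergence. Two spots you gloss over are worth flagging. In Part~I, showing that the positive-side contribution to $\beta_{\gamma_n(t),n}$ converges to $\int_{(0,\infty)\times\{t\}}\frac{x^2}{2(1+x^2)}\nu$ is not automatic from your identity: the paper has to cut at a level $d$, use \refAtwo{} for the range $[d,\infty)$, and separately bound $\int_{[-1/n,d]}\frac{x^2}{1+x^2}\nu_{\gamma_n(t),n}(dx)\leq (d+1/n)\int\frac{\lvert x\rvert}{1+x^2}\nu_{\gamma_n(t),n}(dx)$, controlling the latter via $\tv{\alpha_n}$. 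In Part~III, ``convergence of $u_n$ is preserved under composition'' does not follow from uniform boundedness alone; one needs $u_n(s,t_1,\ell_n)\to u(s,t_1,\ell)$ for a \emph{sequence} $\ell_n\to\ell$, which is why the paper proves the convergence lemma (Lemma~\ref{lemma:proof-thm-1}) directly in that generality.

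The genuine gap is in Part~IV. You propose Aldous' criterion, but Aldous' criterion is equivalent to tightness only in the quasi-left continuous case (Jacod--Shiryaev, Remark~VI.4.7), which rules out fixed times of discontinuity. Here the limit jumps exactly at the atoms of $\mu=\tv{\alpha}+\beta$, i.e., at fixed deterministic times, and there may be countably many of them accumulating. The stopping-time moment estimate you describe does hold (it is essentially the paper's Lemma~\ref{lemma:tightness}) but it does \emph{not} by itself yield tightness. If $\mu$ had only finitely many atoms one could partition $[s,t]$, apply Aldous on each open subinterval, and glue by Whitt's lemma; with accumulating atoms even this fails. The paper instead adapts Kallenberg's proof of Theorem~16.11: choose a finite $\varepsilon$-good partition of $[s,t]$ so that the remaining atoms of $\mu$ inside each open subinterval have total mass $O(\varepsilon^3)$, control intra-subinterval oscillations through chains of $\varepsilon$-escape stopping times, and account for the designated large jump at each subinterval's right endpoint by a separate explicit bound. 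Your sketch correctly senses a difficulty near discontinuities, but the resolution you name would not go through.
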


In property~\ref{property:tightness} we consider $X^n$ as a process with range $[0,\infty]$. Although $X^n$ for fixed $n$ cannot explode, for technical reasons we need to specify its behavior started at $\infty$: in the sequel we assume that $+\infty$ is an absorbing state, so that if $X^n(s) = \infty$ for some $s$, then $X^n(t) = +\infty$ for all $t \geq s$. Let us make a couple of further remarks before proceeding.

\begin{Rk}
	The convergence $u_n(s,t,\lambda) \to u(s,t,\lambda)$ in property~\ref{property:u} actually holds in a functional sense, see Remark~\ref{rk:relative-compactness}.
\end{Rk}

\begin{Rk} \label{remark:skorohod}
	Property~\ref{property:fdd-convergence} is stronger than the usual notion of finite-dimensional convergence, where~\eqref{eq:fdd} typically only holds for times $t_i$ such that $\P(\Delta X(t_i) = 0) = 1$. We get such a result because~\refAone\ holds for all $t \geq 0$, and not only for those for which the functions $(\alpha(t), t \geq 0)$, $(\beta(t), t \geq 0)$ and $(\nu(\, \cdot \, \times (0,t]), t \geq 0)$ are continuous.
	
	To assume~\refAone\ even for fixed times of discontinuity may seem unusual when compared to previous results where typically one would assume convergence in the usual Skorohod topology (such as in~\cite{Borovkov02:0, Kurtz78:0}), which only implies~\refAone\ outside these times. However, if for instance $\alpha_n$ converges to $\alpha$ in a functional sense, then we can find $\gamma'_n$ such that~\refAone\ and~\refAtwo\ hold when considering $\gamma'_n$ instead of $\gamma_n$, see Section~\ref{subsub:GWRE} where this argument is detailed.
\end{Rk}

\begin{Rk}
	The proof of property~\ref{property:tightness} will actually show that $(X_n(y), s \leq y \leq t)$ under $\P(\, \cdot \mid X_n(s) = x_n)$ is tight for any $0 \leq s \leq t$, not only $\bottleneck(t) \leq s \leq t$.
\end{Rk}

\subsection{Around the bottleneck} \label{sub:bottleneck}

We now discuss in more details the notion of bottleneck. Intuitively, we expect the process to be sent to $0$ when going through a bottleneck, which, technically, would mean that $u_n(s,t,\lambda) \to 0$ if $s < \bottleneck(t)$. We now consider an example which illustrates several things that can go wrong, thus justifying our framework, in particular the introduction of the bottleneck $\bottleneck(t)$ and its role in Theorem~\ref{thm:main}.

Consider a critical offspring distribution ${\offdistr}$ and $Y_n = (Y_{i,n}, i \geq 0)$ the Galton Watson process with offspring distribution ${\offdistr}$, started from $n$ individuals. Assume that ${\offdistr}$ and $\Gamma_n$ are such that the sequence $(\widehat Y_n)$ with $\widehat Y_n(t) = Y_{\lfloor \Gamma_n t \rfloor, n} / n$ converges toward a non-conservative CSBP $\widehat Y$. For each $n \geq 1$, we define $Z_n$ by $Z_{0,n} = n$ and ($\delta_k$ denotes the Dirac mass at $k \in \N$):
\[ {\offdistr}_{i,n} = \begin{cases}
	{\offdistr} & \text{ if } 0 < i < \Gamma_n,\\
	\delta_1 & \text{ if } \Gamma_n \leq i < 2 \Gamma_n,\\
	(1-p_n) \delta_0 + p_n \delta_1 & \text{ if } i = 2 \Gamma_n,\\
	{\offdistr} & \text{ if } i > 2 \Gamma_n
\end{cases} \]
for some vanishing sequence $p_n \in [0,1]$. Defining $\gamma_n(t) = \lfloor \Gamma_n t \rfloor$ and $X_n(t) = Z_{\gamma_n(t), n}/n$, we see that $X_n$ coincides (in distribution) with $Y_n$ on $[0,1)$, stays constant on $[1,2)$, undergoes a catastrophe (i.e., a highly subcritical offspring distribution, with mean $p_n$) at time $2$, and then resumes evolving according to ${\offdistr}$ after time $2$. The catastrophe at time $2$ is meant to correspond to a bottleneck, and indeed one can check that $\bottleneck(t) = 0$ if $t < 2$ and $\bottleneck(t) = 2$ if $t \geq 2$. Moreover, since $X_n$ shifted at time $2$ is a rescaled Galton Watson process, the discussion on Galton Watson processes in the next section will show that Assumption~\ref{assumptions} is satisfied.

Let us now see on this example various things that can go wrong around the bottleneck (here at time $2$), although Assumption~\ref{assumptions}
is satisfied. Fix some $t > 2$ and consider $u_n(s,t,\lambda) \to 0$ for $s < 2$. First of all, $u_n(s,t,\lambda)$ may not converge to $0$ and the uniqueness of \eqref{eq:dynamics-u} fails at the left of the bottleneck. Indeed, since $\widehat Y_n$ converges weakly to $\widehat Y$ and $\widehat Y$ is not conservative, there exist $\rho > 0$ and a sequence $y_n \to +\infty$ such that $\P(\widehat Y_n(2-) \geq y_n) \geq \rho$. In particular, just before the catastrophe $X_n$ is, with probability at least $\rho$, at least of the order of $y_n$. In this event, the catastrophe brings $X_n$ to level $p_n y_n$ by the law of large numbers, which diverges if $p_n \gg 1/y_n$, i.e., if $p_n$ vanishes slowly enough. This argument could be made rigorous to show that $u_n(s,t,\lambda)$ does not go to $0$ for $s < 1$. Secondly, even if $s < \bottleneck(t)$ the limit of $u_n(s,t,\lambda)$ depends on $s$: for $s < 1$ we have seen that the limit was $> 0$, while for $1 \leq s < 2$ the limit is $= 0$. Finally, $u_n(s,t,\lambda)$ may even fail to converge: to see this, one may for instance consider two sequences $p_n^{(1)}$ and $p_n^{(2)}$ with $y_n p_n^{(1)} \to +\infty$ and $y_n p_n^{(2)} \to 0$, $X_n^{(1)}$ and $X_n^{(2)}$ the two processes obtained by the above construction using $p_n^{(1)}$ and $p_n^{(2)}$ instead of $p_n$, respectively, and finally intertwine them by considering $X_{2n} = X_n^{(1)}$ and $X_{2n+1} = X_n^{(2)}$.
\\

This example therefore shows that a wide variety of behavior can happen before the bottleneck. We now give a sufficient condition that ensures $\bottleneck(t) = 0$, i.e., that there is no bottleneck before time $t$. Note that under this condition, Theorem~\ref{thm:main} then describes the behavior of $X_n$ on $[0,t]$. Intuitively, the following assumption ensures that $\xi_{i,n}$ is not too close to $0$, which avoids the almost sure absorption in one generation. For instance, it prevents the catastrophe of the previous example at time $2$.

\begin{Prop}[No bottleneck] \label{prop:no-bottleneck} 
	Let $t>0$. If for every $C > 0$
	\begin{equation} \label{eq:no-bottleneck}
		\liminf_{n \to +\infty} \left( \inf_{0 \leq i \leq \gamma_n(t)} \E\left(\xi_{i,n} ; \xi_{i,n} \leq Cn \right) \right) > 0,
	\end{equation}
	then $\bottleneck(t) = 0$.
\end{Prop}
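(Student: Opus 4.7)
The plan is to find $\varepsilon,\delta>0$ such that, for every $n$ large enough and every $y\in[0,t]$,
\[ \P\bigl(X_n(t)>\varepsilon \,\big|\, X_n(y)=1\bigr) \geq \delta. \]
Since the inner infimum in~\eqref{eq:def-sigma} is taken over a larger set of $y$ when $s$ decreases, this will imply that no $s>0$ satisfies the condition defining $\bottleneck(t)$, hence $\bottleneck(t)=0$.

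To this end, fix $C>0$, put $\eta_{i,n}=\xi_{i,n}\wedge\lfloor Cn\rfloor$, and couple a truncated Galton Watson process $\widetilde Z_n$ (with offspring distributions those of the $\eta_{i,n}$) to $Z_n$ so that $\widetilde Z_{i,n}\leq Z_{i,n}$ pointwise from generation $k_0=\gamma_n(y)$ onward, both starting with $n$ individuals. Writing $m_{i,n}=\E(\eta_{i,n})$, $s_{i,n}^2=\mathrm{Var}(\eta_{i,n})$ and $\overline{\eta}_{i,n}=(\eta_{i,n}-1)/n$, hypothesis~\eqref{eq:no-bottleneck} gives $m_{i,n}\geq\E(\xi_{i,n};\xi_{i,n}\leq Cn)\geq c_C>0$ uniformly in $i\leq\gamma_n(t)$ and $n$ large, and $\widetilde Z$ has finite moments of all orders. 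It then suffices to prove the displayed estimate with $X_n(t)$ replaced by $\widetilde Z_{\gamma_n(t),n}/n$.

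The heart of the argument is to deduce from Assumption~\ref{assumptions} the two sum estimates
\[ \sum_{i=0}^{\gamma_n(t)-1}|m_{i,n}-1| \leq K_1 \qquad \text{and} \qquad \sum_{i=0}^{\gamma_n(t)-1}\E(\overline{\eta}_{i,n}^{\,2}) \leq K_2/n, \]
uniformly in $n$; each follows by expressing the truncated first and second moments of $\overline{\eta}_{i,n}$ against the L\'evy reference functions $x/(1+x^2)$ and $x^2/(1+x^2)$ and absorbing the error terms by either $\beta_{i,n}$ (using the pointwise estimates $|x-x/(1+x^2)|\leq x^2\wedge|x|$ and $x^2\leq 2x^2/(1+x^2)$ for $|x|\leq 1$) or by tail integrals of $\nu_n$. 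The first sum estimate combined with $m_{i,n}\geq c_C$ and a case split of $\log(1+x)$ according to whether $|m_{i,n}-1|\gtrless 1/2$ (only $O(K_1)$ indices fall in the ``large'' case, where $|\log m_{i,n}|\leq|\log c_C|$) then yields a uniform lower bound on partial products:
\[ M_{k_0,k} := \prod_{i=k_0}^{k-1} m_{i,n} \geq C_1 > 0 \qquad (0\leq k_0\leq k\leq \gamma_n(t)). \]

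With $k_1=\gamma_n(t)$, the classical second-moment recursion for time-inhomogeneous branching gives
\[ \frac{\mathrm{Var}(\widetilde Z_{k_1,n})}{\E(\widetilde Z_{k_1,n})^2} = \frac{1}{n}\sum_{j=k_0}^{k_1-1} \frac{s_{j,n}^2}{m_{j,n}^2\,M_{k_0,j}}, \]
which, upon using $s_{j,n}^2\leq n^2\E(\overline{\eta}_{j,n}^{\,2})$ together with the lower bounds $m_{j,n}\geq c_C$, $M_{k_0,j}\geq C_1$, and the second sum estimate, is bounded by some constant $K_4$ independent of $n$ and $y$. The Paley--Zygmund inequality at level $1/2$ then yields $\P(\widetilde Z_{k_1,n}\geq nC_1/2\mid \widetilde Z_{k_0,n}=n)\geq 1/(4(1+K_4))=:\delta>0$, which, combined with the stochastic comparison, is the sought estimate with $\varepsilon=C_1/2$. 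The main obstacle is establishing the two sum estimates above: the bookkeeping requires trading the truncation $\mathbbm{1}_{|x|\leq C}$ simultaneously against the L\'evy functions governing $\alpha_n$ and $\beta_n$ while keeping every resulting error term absorbed by either $\beta_n$ or a tail of $\nu_n$; once they are in hand, the rest is a textbook second-moment argument.
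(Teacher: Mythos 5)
Your argument is correct and follows a genuinely different route from the paper's. The paper reformulates the claim via the Laplace exponent: by Lemma~\ref{lemma:inf} it suffices to show $\liminf_n \inf_{0\leq y\leq t} u_n(y,t,1)>0$, and this is obtained by verifying that $w_{i,n}=u_n(t_i^n,t,1)$ satisfies a quadratic (Riccati-type) backward recursion $w_{i}\geq a_{i,n}w_{i+1}-b_{i,n}w_{i+1}^2$ with $a_{i,n},b_{i,n}$ built from $\nu_{i,n}$ truncated at $C=1/(2\overline c^u_{t,1})$, and then applying the technical Lemma~\ref{lemmesuiterec} to bound the solution from below; the bound hinges on controlling partial products $\pi_{i,j,n}$ via $\sup_n\tv{\alpha_n}(t)$ and $\sup_n\beta_n(t)$. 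You instead work directly with the event probability in the definition~\eqref{eq:def-sigma}, couple $Z_n$ from below by the truncated process $\widetilde Z_n$, establish the two sum estimates $\sum_i|m_{i,n}-1|\leq K_1$ and $\sum_i\E(\overline\eta_{i,n}^2)\leq K_2/n$ (both of which do follow from the boundedness of $\tv{\alpha_n}(t)$ and $\beta_n(t)$: the key pointwise inequalities are $\overline\eta_{i,n}^2\wedge C^2\leq(1+C^2)\overline\xi_{i,n}^2/(1+\overline\xi_{i,n}^2)$ and $|\overline\eta_{i,n}^3/(1+\overline\eta_{i,n}^2)|\leq C\,\overline\eta_{i,n}^2/(1+\overline\eta_{i,n}^2)$, together with the tail bound $\P(\overline\xi_{i,n}>C/2)\lesssim\beta_{i,n}/n$), and then run a textbook Paley--Zygmund argument on $\widetilde Z_n$. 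At bottom the two proofs are encoding the same first- and second-moment information -- the quadratic inequality in Lemma~\ref{lemmesuiterec} is the Laplace-transform shadow of your variance recursion $V_{k+1}=V_k+s_k^2/(m_k^2\E\widetilde Z_k)$ -- but yours bypasses the $u_n$ machinery and Lemma~\ref{lemmesuiterec} entirely, at the cost of introducing the auxiliary coupling. Your approach is arguably more transparent probabilistically, while the paper's stays within the Laplace-exponent framework it has already set up and gives a quantitative lower bound on $u_n$ itself, which is occasionally more directly useful.
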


We now conclude this discussion by giving a condition under which $u_n(s,t,\lambda) \to 0$ along a subsequence, uniformly in $s < \bottleneck(t)$ and $\lambda \geq 0$. Then the process started before the bottleneck  goes  as expected to
zero (along a subsequence) when going through the bottleneck. Note that the example given at the beginning of this section shows that this is not always the case. Roughly speaking, this condition means that the limiting process is conservative. Because of our definition of the bottleneck via a $\liminf$ in $n$, this is a challenging result to get ride of the subsequence.

\begin{Prop}[No explosion] \label{prop:no-explosion}
	Fix some $t>0$. If the two sequences $(\tv{\alpha_n}(t), n \geq 1)$ and $(\beta_n(t), n \geq 1)$ are bounded and
	\begin{equation} \label{eq:tightness}
		\lim_{A \rightarrow\infty} \ \sup_{n \geq 1, \, 0\leq s \leq y \leq t} \P(X_n(y) \geq A \ \vert \ X_n(s)=1) = 0, 
	\end{equation}
	then there exists an increasing sequence of integers $n(k)$ such that $u_{n(k)}(s,t,\lambda) \to 0$ as $k \to +\infty$, for all $s < \bottleneck(t)$ and $\lambda\geq 0$.
	
	Moreover, these assumptions are satisfied, i.e., $(\tv{\alpha_n}(t), n \geq 1)$ and $(\beta_n(t), n \geq 1)$ are bounded and~\eqref{eq:tightness} holds, if the following first moment condition is satisfied:
	\begin{equation} \label{eq:condition-first-moment}
		\sup_{n \geq 1} \left( n \sum_{i=0}^{\gamma_n(t)-1} \E\big(\lvert \overline \xi_{i,n}\rvert\big) \right) < +\infty.
	\end{equation}
\end{Prop}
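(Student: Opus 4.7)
The plan is to derive $u_n(s, t, \lambda) \to 0$ from two independent bounds on $u_n$, and then to produce a single subsequence valid for all $(s, \lambda)$ by a diagonal extraction exploiting the monotonicity of $u_n$ in $\lambda$.

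First I would establish a \emph{propagation inequality}. By the Markov and branching properties, for every $0 \leq s \leq y \leq t$,
$$e^{-u_n(s, t, \lambda)} = \E\bigl[e^{-X_n(y) u_n(y, t, \lambda)} \mid X_n(s) = 1\bigr].$$
Restricting the expectation to $\{X_n(y) \leq A\}$ and using~\eqref{eq:tightness} to choose, for each $\rho > 0$, a constant $A = A(\rho)$ with $\P(X_n(y) \leq A \mid X_n(s) = 1) \geq 1 - \rho$ uniformly in $n$ and $0 \leq s \leq y \leq t$, one obtains
$$u_n(s, t, \lambda) \leq A(\rho)\, u_n(y, t, \lambda) + \log\!\bigl(1/(1-\rho)\bigr), \qquad 0 \leq s \leq y \leq t,\ \lambda \geq 0.$$

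The second bound uses the definition of the bottleneck. The elementary estimate $\E[e^{-\lambda X_n(t)} \mid X_n(y) = 1] \geq e^{-\lambda \varepsilon}\P(X_n(t) \leq \varepsilon \mid X_n(y) = 1)$, after taking the infimum over $y \in [s, t]$, yields
$$\inf_{s \leq y \leq t} u_n(y, t, \lambda) \leq \lambda \varepsilon - \log\!\Bigl(1 - \inf_{s \leq y \leq t} \P(X_n(t) > \varepsilon \mid X_n(y) = 1)\Bigr).$$
A key observation is that $\varepsilon \mapsto \inf_y \P(X_n(t) > \varepsilon \mid X_n(y) = 1)$ is nonincreasing, so the condition $\lim_{\varepsilon \to 0}\liminf_n = 0$ in~\eqref{eq:def-sigma} in fact forces $\liminf_n \inf_y \P(X_n(t) > \varepsilon \mid X_n(y) = 1) = 0$ for every $\varepsilon > 0$; for $s < \bottleneck(t)$ this holds with the infimum taken over $[s, t]$. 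Combining with the propagation inequality gives $\liminf_n u_n(s, t, \lambda) \leq \log(1/(1-\rho))$ for every $\rho > 0$, hence $\liminf_n u_n(s, t, \lambda) = 0$.

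To obtain a single subsequence $n(k)$ valid for all $(s, \lambda)$, I would diagonalize over the countable set of rational pairs $(s', \lambda') \in [0, \bottleneck(t)) \times [0, \infty)$ to extract $n(k)$ with $u_{n(k)}(s', t, \lambda') \to 0$ for every such pair. For arbitrary $s < \bottleneck(t)$ and $\lambda \geq 0$, pick rational $\lambda' \geq \lambda$ and rational $s' \in (s, \bottleneck(t))$: monotonicity of $u_n(s, t, \cdot)$ gives $u_{n(k)}(s, t, \lambda) \leq u_{n(k)}(s, t, \lambda')$, and the propagation inequality applied at $y = s'$ gives $u_{n(k)}(s, t, \lambda') \leq A(\rho) u_{n(k)}(s', t, \lambda') + \log(1/(1-\rho))$; letting $k \to \infty$ and then $\rho \to 0$ gives $u_{n(k)}(s, t, \lambda) \to 0$. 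Finally, for the second assertion, condition~\eqref{eq:condition-first-moment} together with the elementary inequalities $|\overline \xi_{i,n}|/(1+\overline \xi_{i,n}^2) \leq |\overline \xi_{i,n}|$ and $\overline \xi_{i,n}^2/(1+\overline \xi_{i,n}^2) \leq |\overline \xi_{i,n}|$ bounds $|\alpha_{i,n}|$ and $\beta_{i,n}$ in terms of $\E(|\xi_{i,n} - 1|)$, and Markov's inequality combined with the branching identity $\E[X_n(y) \mid X_n(s) = 1] = \prod_{i=\gamma_n(s)}^{\gamma_n(y)-1} \E(\xi_{i,n}) \leq \exp\bigl(\sum_i \E|\xi_{i,n} - 1|\bigr)$ yields~\eqref{eq:tightness}. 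The main obstacle of the argument is the uniform subsequence extraction: the pointwise statement $\liminf_n u_n(s, t, \lambda) = 0$ is clean, but welding these into a single subsequence simultaneously valid for all $s$ and $\lambda$ forces one to combine diagonalization, monotonicity in $\lambda$, and the propagation inequality.
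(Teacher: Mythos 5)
Your overall strategy mirrors the paper's (characterize the bottleneck via $u_n$, propagate the decay of $u_n$ backwards in time, reduce the moment condition to~\eqref{eq:tightness}), and the propagation inequality you derive is a perfectly serviceable elementary substitute for the paper's pair of tools~\eqref{limitecomp} (uniform smallness of $u_n(s,y,\lambda)$ for small $\lambda$) plus the composition rule~\eqref{eq:composition-rule}. Your treatment of the second part, bounding $\E[X_n(y)\mid X_n(s)=1]$ by a product of offspring means and invoking Markov's inequality, is also correct and is a somewhat more direct route than the paper's, which instead runs Gronwall's lemma on the Laplace exponent.

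However, the diagonalization step is a genuine gap. From the pointwise statement $\liminf_n u_n(s',t,\lambda')=0$ for each rational pair $(s',\lambda')$ one \emph{cannot} in general extract a single subsequence $n(k)$ along which all of these tend to $0$: the sets of indices realizing the respective liminfs need not be compatible (take $a_n^{(1)}=\indicator{n \text{ even}}$, $a_n^{(2)}=\indicator{n \text{ odd}}$). Nothing you wrote rules this out, and nothing in the monotonicity of $u_n$ in $\lambda$ prevents incompatible liminf-realizing subsequences across different $s$. The paper sidesteps this entirely by exploiting the \emph{uniform-in-$y$} form of the bottleneck characterization (Lemma~\ref{lemma:inf}): for $s<\bottleneck(t)$ one has $\liminf_n\inf_{s\leq y\leq t}u_n(y,t,\lambda)=0$, which lets one choose a single pair of sequences $(n(k))$ and $(y_k)$ with $n(k)\to\infty$, $\liminf_k y_k\geq\bottleneck(t)$ and $u_{n(k)}(y_k,t,\lambda)\to 0$; then, for every $s<\bottleneck(t)$, eventually $s\leq y_k$ and the composition rule plus~\eqref{limitecomp} (equivalently, your propagation inequality followed by $\rho\to 0$) gives $u_{n(k)}(s,t,\lambda)\to 0$. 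That this one subsequence, built for a single $\lambda>0$, actually works for every $\lambda\geq 0$ then follows automatically: $u_{n(k)}(y_k,t,\lambda)\to 0$ means $X_{n(k)}(t)$ conditionally on $X_{n(k)}(y_k)=1$ converges in distribution to $0$, hence $u_{n(k)}(y_k,t,\lambda')\to 0$ for every $\lambda'$. In short, you had all the ingredients, but you should replace the pointwise-liminf diagonalization by the direct construction of one sequence $(y_k,n(k))$ from the inf-over-$y$ version of the bottleneck condition, and then invoke your propagation inequality to cover all $s<\bottleneck(t)$ at once.
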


\subsection{Comparison with earlier work} \label{sub:comparison}

In the Galton Watson case where ${\offdistr}_{i,n} = {\offdistr}_{0,n}$ and $\gamma_n(t) = \lfloor \Gamma_n t \rfloor$ for some sequence $\Gamma_n \to +\infty$, necessary and sufficient conditions for the finite-dimensional convergence of $(X_n, n \geq 1)$ are known since Grimvall~\cite{Grimvall74:0}. In this case, the next result shows that our Assumption~\ref{assumptions} is sharp.

\begin{Lem} \label{lemma:equivalence-GW}
	In the Galton Watson case, i.e., $\gamma_n(t) = \lfloor \Gamma_n t \rfloor$ and ${\offdistr}_{i,n} = {\offdistr}_{0,n}$, the sequence $(X_n, n \geq 1)$ converges in the sense of finite-dimensional distributions if and only if Assumption~\ref{assumptions} holds.
\end{Lem}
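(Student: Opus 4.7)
The $(\Leftarrow)$ direction is immediate: under Assumption~\ref{assumptions}, Theorem~\ref{thm:main} property~\ref{property:fdd-convergence} already delivers the convergence of the Laplace transform of $(X_n(t_1),\ldots,X_n(t_I))$ for arbitrary times and starting points, which is finite-dimensional convergence. So all the content is in the $(\Rightarrow)$ direction, where the plan is to reduce the problem to Grimvall's classical characterization~\cite{Grimvall74:0} of scaling limits of homogeneous Galton Watson processes and then translate his triplet to ours.

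The key observation that makes the reduction clean is that, in the setting of the lemma, the triplet degenerates to a piecewise linear object. Since $\xi_{i,n}$ is distributed as $\xi_{0,n}$ and $\gamma_n(t)=\lfloor \Gamma_n t\rfloor$,
\[
\alpha_n(t) = \lfloor \Gamma_n t\rfloor\,\alpha_{0,n}, \quad \beta_n(t) = \lfloor \Gamma_n t\rfloor\,\beta_{0,n}, \quad \nu_n([x,\infty)\times(0,t]) = \lfloor \Gamma_n t\rfloor\cdot n\P(\overline{\xi}_{0,n}\geq x),
\]
and $\tv{\alpha_n}(t) = \lfloor \Gamma_n t\rfloor\,|\alpha_{0,n}|$, the latter because $\alpha_n$ is a staircase function with jumps of constant sign. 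Consequently,~\refAone\ at an arbitrary $t>0$ reduces to~\refAone\ at $t=1$, with limit functions automatically linear in $t$, and~\refAtwo\ is vacuous since $\alpha$, $\beta$ and $\nu(\cdot\times(0,t])$ are then continuous in $t$.

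It then remains to prove that f.d.d.-convergence of $(X_n)$ forces the single-generation triplet
\[
\Big(\Gamma_n\alpha_{0,n},\;\Gamma_n\beta_{0,n},\;\Gamma_n\cdot n\P(\overline{\xi}_{0,n}\geq\cdot)\Big)
\]
to converge. This is where I would invoke Grimvall: by the branching property, $X_n(1)$ started from $X_n(0)=1$ is the normalised sum of $n$ i.i.d.\ copies of $Z_{\lfloor \Gamma_n\rfloor,n}$ started from one individual, so its weak limit (coming from f.d.d.-convergence) is infinitely divisible, and the Gnedenko triangular array theorem produces the Lévy--Khintchine characteristics of the limit. By the iteration structure of the offspring generating function (this is Grimvall's contribution), convergence of these characteristics is equivalent to convergence of the single-generation triplet above to the Lévy--Khintchine triplet of the branching mechanism $\psi$ of the limiting CSBP. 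Matching the truncation $x/(1+x^2)$ used in our definition of $\alpha_{0,n}$ and $\beta_{0,n}$ with Grimvall's centering function is a routine identification.

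The only delicate point I anticipate is that, a priori, f.d.d.-convergence should only yield convergence of $\alpha_n(t)$ and not of $\tv{\alpha_n}(t)$; but here this is automatic because $\alpha_{0,n}$ is a single real number, so $\tv{\alpha_n}(t) = t\Gamma_n|\alpha_{0,n}|$, and convergence of $\Gamma_n\alpha_{0,n}$ immediately gives convergence of $\Gamma_n|\alpha_{0,n}|$ to the corresponding absolute value. Thus the extra total-variation clause in~\refAone, while essential in the general varying environment framework, is a harmless addendum in the homogeneous Galton Watson case.
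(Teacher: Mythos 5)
Your proposal follows essentially the same route as the paper: both directions hinge on Grimvall's Theorem~3.1, which states that finite-dimensional convergence of $(X_n)$ is equivalent to weak convergence of the rowwise i.i.d.\ sum $\sum_{k=1}^{n\Gamma_n}\overline\xi_n(k)$, and the hard direction then reduces to the Gnedenko--Kolmogorov characterization of convergence for triangular arrays. Your observation that the total-variation clause in \refAone\ is automatic in the GW case (because $\alpha_n$ is a staircase with jumps of constant sign, so $\tv{\alpha_n}(t)=\lfloor\Gamma_n t\rfloor\lvert\alpha_{0,n}\rvert$) is exactly the implicit reason the paper can state Assumption~\ref{assumptions} in the GW case without a separate total-variation condition. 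Two small cautions. First, your narrative of the hard direction slightly scrambles the roles of Grimvall and Gnedenko: you speak of applying Gnedenko to the sum of $n$ independent copies of $Z_{\lfloor\Gamma_n\rfloor,n}$, but that is not a triangular array whose characteristics are those of $\overline\xi_n$; the clean route (and the paper's) is to invoke Grimvall~Theorem~3.1 first to pass to the array $\sum_{k=1}^{n\Gamma_n}\overline\xi_n(k)$ and only then apply Gnedenko to that array. Second, the ``routine identification'' between Gnedenko's $\indicator{\lvert x\rvert\le\varepsilon}$-style centering and the paper's $x/(1+x^2)$ truncation is the bulk of the paper's Appendix~\ref{appendix:proof-GW}: it requires establishing $\sup_n n\Gamma_n\E(m_2(\overline\xi_n))<\infty$ and a dominated-convergence argument as $\varepsilon\to 0$, so it deserves more than a wave of the hand, even though the approach is sound.
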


In the case of Galton Watson processes in varying environment, the first results seem to have been proved by Kurtz~\cite{Kurtz78:0} (see also Keiding~\cite{Keiding75:0} and Helland~\cite{Helland81:0} for the case of i.i.d.\ environment). Kurtz~\cite{Kurtz78:0} used semigroup techniques to study the case where offspring distributions have uniformly bounded third moments, which was later weakened by Borovkov~\cite{Borovkov02:0} to a second moment condition. There are three main differences between the assumptions made in~\cite{Borovkov02:0, Kurtz78:0} and our Assumption~\ref{assumptions}.

First, we do not need to assume uniformly bounded second moments. To our knowledge, this is the first result in that context and, as discussed above, this leads to new bottleneck phenomena. This also makes it possible to understand some subtle questions related to time scales that do not play a role in the finite variance case, see the discussion on Galton Watson processes in random environment in Section~\ref{subsub:GWRE}.

Second, as we already mentioned in the introduction, the function that in~\cite{Borovkov02:0, Kurtz78:0} essentially plays the role of our $\alpha_n$ is not assumed to have finite variations in~\cite{Borovkov02:0, Kurtz78:0}. It is important to note, however, that we use fundamentally different techniques that make it possible to go beyond the finite variance case: in particular, we end up with scaling limits outside the family obtained by Kurtz and Borovkov. Note also that this finite variation assumption is natural in our approach: otherwise it is not clear what meaning should be given to the term $\int_{(s,t]} u_{t, \lambda}(y) \alpha(dy)$ in~\eqref{eq:dynamics-u}. An enticing approach would be to consider $\alpha$ with finite quadratic variations, which would for instance make it possible to use a pathwise construction of It\^o's integral such as in F\"ollmer~\cite{Follmer81:0}, see also Wong and Zakai~\cite{Wong65:0}.

Finally, the functions that in~\cite{Borovkov02:0, Kurtz78:0} essentially play the role of our $\alpha_n$ and $\beta_n$ are assumed in~\cite{Borovkov02:0, Kurtz78:0} to converge in the $J_1$ topology, whereas here we only assume pointwise convergence (cf.\ Remark~\ref{remark:skorohod}).
\\

Let us finally mention an interesting and potentially fruitful connection with the convergence of processes with independent increments. In the proof of Lemma~\ref{lemma:equivalence-GW} (cf.\ Appendix~\ref{appendix:proof-GW}) we will prove that in the Galton Watson case, Assumption~\ref{assumptions} is equivalent to the convergence of some arrays of rowwise i.i.d.\ random variables. Actually, a more general result holds: indeed, starting from Theorem~VII.$4.4$ in~\cite{Jacod03:0}, it can be seen that Assumption~\ref{assumptions} is almost equivalent to the convergence of the sum
\begin{equation} \label{eq:triangular-array}
	\sum_{i = 1}^{\gamma_n(t)-1} \sum_{k=1}^n \overline \xi_{i,n}(k)
\end{equation}
where the $\overline \xi_{i,n}(k)$'s are independent and $\overline \xi_{i,n}(k)$ is distributed according to $\overline \xi_{i,n}$.

In the case of constant environment, this relation between triangular arrays and branching processes has a simple explanation: indeed, the Lamperti transformation transforms a branching process into a random walk and thus lies in-between these two objects. However, the Lamperti transformation breaks down for time-inhomogeneous Markov processes: the pre-image of a branching process (the so-called Lukasiewicz path, or breadth-first exploration) is no longer a random walk, and does not seem to have a simple probabilistic structure. It was therefore utterly surprising to us to end up with a condition which suggests that some process with independent increments plays a key role.

\subsection{Applications} \label{sub:applications}

We discuss in this section new results that stem from of Theorem~\ref{thm:main}. We keep the discussion at a high level and reserve rigorous results for future work (with the exception of Proposition~\ref{prop:Feller}).

\subsubsection{Scaling limits of Galton Watson processes in random (i.i.d.) environment} \label{subsub:GWRE}

Consider the case where for each $n \geq 1$, the sequence $({\offdistr}_{i,n}, i \geq 0)$ is i.i.d., distributed according to a random offspring distribution $Q_n$. Then the sequence $((\alpha_{i,n}, \beta_{i,n}, \nu_{i,n}), i \geq 0)$ is an i.i.d.\ sequence of $\R \times [0,\infty) \times \Mcal$-valued random variables, with $\Mcal$ the space of measures on $\R$. Because of the law of large numbers, it is therefore natural to choose $\gamma_n$ linear in $t$, i.e., $\gamma_n(t) = \lfloor \Gamma_n t \rfloor$ for some sequence $\Gamma_n \to +\infty$. We now discuss conditions under which Assumption~\ref{assumptions} holds.
\\

We are interested in the convergence of the process $Y_n(t) = (\alpha_n(t), \beta_n(t), M_n(t))$, where $M_n(t) = \sum_{0 \leq i < \gamma_n(t)} \nu_{i,n}$ defines a measure-valued process. The process $Y_n$ has i.i.d.\ increments, and so leveraging classical results on the convergence of measure-valued processes and on the convergence of random walks, we can get an explicit condition for its convergence. A function $h: \R^3 \to \R^3$ is called truncation function if it is continuous, bounded and satisfies $h(x) = x$ in a neighborhood of $0$. Let $\Ccal$ be a set of functions dense in the set of bounded, continuous functions, and for $\varphi \in \Ccal$ let $y_n^\varphi = (\alpha_{0,n}, \beta_{0,n}, \int \varphi d \nu_{0,n} )$.

\begin{condition}
	There exist a truncation function $h$, $F^\varphi$ a measure on $\R^3$ integrating $1 \wedge \lvert x\rvert^2$ and scalars $b^\varphi \in \R^3$, $c^\varphi_{ij} \geq 0$ such that for every $\varphi \in \Ccal$,
	\begin{multline} \label{eq:condition}
		\Gamma_n \E(h(y_n^\varphi)) \mathop{\longrightarrow}_{n \to +\infty} b^\varphi, \ \Gamma_n \left\{ \E\big[h_i(y_n^\varphi) h_j(y_n^\varphi)\big] - \E(h_i(y_n^\varphi)) \E(h_j(y_n^\varphi)) \right\} \mathop{\longrightarrow}_{n \to +\infty} c^\varphi_{ij}\\
		\text{and } \ \Gamma_n \E(g(y_n^\varphi)) \mathop{\longrightarrow}_{n \to +\infty} \int g(x) F^\varphi(dx).
	\end{multline}
	
	In the above, the second convergence holds for all $i,j = 1,2,3$ and the last convergence holds for all bounded, continuous functions $g$ that are equal to $0$ in a neighborhood of $0$.
\end{condition}

Assuming that this condition holds, it can be proved\footnote{Further details can be found in an earlier version of this paper at http://arxiv.org/pdf/1112.2547v3.pdf.} that $Y_n$ converges to the process $Y(t) = (\alpha(t), \beta(t), M(t))$ such that for every $\varphi$ continuous and bounded, the process $Y^\varphi = (\alpha, \beta, M^\varphi)$ with $M^\varphi = (\int \varphi(x) M(t)(dx), t \geq 0)$ is the L\'evy process with L\'evy exponent
\[ \psi^\varphi(v) = i v b^\varphi - \frac{1}{2} v c^\varphi v + \int \left( e^{i v x} - 1 - i v h(x) \right) F^\varphi(dx), \ v \in \R^3. \]

Further, using Skorohod's embedding theorem, we can assume that the convergence $Y_n \to Y$ holds almost surely. By definition, there exists a sequence of increasing bijections $(\lambda_n, n \geq 1)$ from $[0,\infty)$ to $[0,\infty)$ such that $\sup_{0 \leq s \leq t} \lvert \lambda_n(s) - s \rvert \to 0$ for every $t \geq 0$, and such that assumptions~\refAone\ (except for the convergence of $\tv{\alpha_n}$) and \refAtwo\ are satisfied for $\gamma'_n = \gamma_n \circ \lambda_n$ (see, e.g., Proposition VI.$2$.$1$ in Jacod and Shiryaev~\cite{Jacod03:0}).

Assuming now that $\alpha$ is of finite variations, $\alpha$ being a L\'evy process must be of the form $\alpha(t) = \d_\alpha t + S_+(t) - S_-(t)$ where $\d_\alpha \in \R$ and $S_+$ and $S_-$ are two independent pure-jump subordinators (see, e.g., Bertoin~\cite{Bertoin96:0}). With this special structure, it is possible to prove that $\tv{\alpha_n}(t) \to \tv{\alpha}(t)$ so that Assumption~\ref{assumptions} is fully satisfied and all the conclusions of Theorem~\ref{thm:main} hold. It would be interesting to delve deeper into the probabilistic structure of the process $(\alpha, \beta, \nu)$, and to understand how it relates to the properties of the limiting process $X$ such as the extinction probability or the speed of extinction. In the literature, only the case of Feller diffusion in random environment where $\nu = 0$ and $\alpha$ is a Brownian motion has begun to be looked at, see, e.g., B\"oinghoff and Hutzenthaler~\cite{Boinghoff12:0}.
\\

We conclude this section by commenting on a question that actually motivated us in the first place: given a sequence of Galton Watson processes in random environment, how can we find the right renormalization in time, i.e., the right sequence $(\Gamma_n)$?

For the sake of the discussion, we consider one of the simplest possible case where in each generation we choose at random among one of two possible offspring distributions, i.e., we can write $Q_n = p_n^{(1)} \delta_{q^{(1)}} + p_n^{(2)} \delta_{q^{(2)}}$ where $p_n^{(j)} \in [0,1]$, $p_n^{(1)} + p_n^{(2)} = 1$ and $q^{(1)}$, $q^{(2)}$ are two offspring distributions. In this discussion, we will call a CSBP with characteristic $(b,c,F)$ the CSBP whose branching mechanism is given by $\psi(\lambda) = \lambda b - \frac{1}{2} c \lambda^2 + \int(e^{-\lambda x} - 1 - \lambda x \indicator{x \leq 1}) F(dx)$. For each $j = 1,2$ let $Z_n^{(j)} = (Z_n^{(j)}(i), i \geq 0)$ be a Galton Watson process with offspring distribution $q^{(j)}$ and consider $(\Gamma^{(j)}_n)$ a sequence such that $(X_n^{(j)}, n \geq 1)$ converges weakly to the CSBP with characteristic $(b^{(j)}, c^{(j)}, F^{(j)})$, where $X_n^{(j)}(t) = n^{-1} Z_n^{(j)}(\lfloor \Gamma_n^{(j)}t \rfloor)$.

If both $q^{(1)}$ and $q^{(2)}$ have finite variance, then it is well-known that in order to renormalize the Galton Watson process with offspring distribution $q^{(i)}$ when the space scale is $n$, one needs to speed up time with $n$ also, i.e., $\Gamma^{(1)}_n = \Gamma^{(2)}_n = n$. Thus when ``mixing'' these two processes, it is natural to speed up the resulting process by the common time scale and thus take $\Gamma_n = n$. To our knowledge, only such cases have been considered in the literature so far. When offspring distributions have infinite variance however, the situation becomes more delicate. Indeed, if for instance $q^{(1)}([x,\infty)) \sim x^{-a}$ as $x \to +\infty$ for some $a \in (1,2)$, then one needs to consider $\Gamma^{(1)}_n = n^{a-1}$. Thus there are now two ``natural'' time scales, namely $\Gamma^{(1)}_n = n^{a-1}$ and $\Gamma^{(2)}_n = n$.

To understand how one should choose $\Gamma_n$ and what should be the limit, it is useful to have the following interpretation in mind: $\Gamma_n^{(j)}$ is the number of generations needed in order for $Z_n^{(j)}$ to evolve by $n$. Consider now the Galton Watson process in random environment that mixes $q^{(1)}$ and $q^{(2)}$ via $Q_n$ as above: then over $\Gamma_n$ generations, the law of large numbers implies that $q^{(j)}$ has been used $p_n^{(j)} \Gamma_n$ times. Thus, if $p_n^{(j)} \Gamma_n \ll \Gamma_n^{(j)}$, the offspring distribution $q^{(j)}$ has not been picked sufficiently often in order to have any effect (on the space scale $n$). This suggests that the correct time scale is $\Gamma_n = \min_j (\Gamma_n^{(j)}/p_n^{(j)})$ and indeed, the following result can be proved using Theorem~\ref{thm:main}:
\begin{itemize}
	\item if $\Gamma_n^{(1)} / p_n^{(1)} \ll \Gamma_n^{(2)}/p_n^{(2)}$ and $\Gamma_n = \Gamma_n^{(1)}/p_n^{(1)}$, then $X_n$ converges toward the CSBP with branching mechanism $(b^{(1)}, c^{(1)}, F^{(1)})$;
	\item if $\Gamma_n^{(2)} / p_n^{(2)} \ll \Gamma_n^{(1)}/p_n^{(1)}$ and $\Gamma_n = \Gamma_n^{(2)}/p_n^{(2)}$, then $X_n$ converges toward the CSBP with branching mechanism $(b^{(2)}, c^{(2)}, F^{(2)})$;
	\item if $\Gamma_n^{(1)} p_n^{(2)} / (\Gamma_n^{(2)} p_n^{(1)}) \to \ell \in (0,\infty)$ and $\Gamma_n = \Gamma_n^{(1)}/p_n^{(1)}$, then $X_n$ converges toward the CSBP with characteristic $(b^{(1)} + \ell b^{(2)}, c^{(1)} + \ell c^{(2)}, F^{(1)} + \ell F^{(2)})$.
\end{itemize}

This discussion can be easily extended to the case of a finite number of offspring distributions that also vary with $n$, and it would be very interesting to understand the implications of Theorem~\ref{thm:main} in more general settings, e.g., when we can choose among uncountably many offspring distributions.

\subsubsection{Feller diffusion} Going back to the case of varying environment, the finite variance case is of particular interest. This is the only one that has been studied so far, see in particular~\cite{Borovkov02:0, Kurtz78:0}. In this case, our approach via the generalized branching equation~\eqref{eq:dynamics-u} makes it possible to derive a necessary and sufficient condition for the extinction probability of a Feller diffusion in varying environment, see~\eqref{eq:proba-extinction} below. This extends results already known for linear birth and death branching processes in varying environment from~\cite{Kendall48:0} and for particular classes of CSBP in random environment from~\cite{BPMS, Boinghoff12:0}.

\begin{Prop} \label{prop:Feller}
	Assume that Assumption~\ref{assumptions} holds with $\nu = 0$. Then $\beta$ is continuous and for all $t\geq 0$, we have
	\begin{equation} \label{eq:solution-Feller}
		u(s,t,\lambda) = \frac{\exp(-\overline \alpha(s))}{\lambda^{-1} \exp(-\overline\alpha(t)) + \int_{(s,t]} \exp(-\overline\alpha(y)) \beta(dy)}, \ 0 \leq s \leq t, \lambda \geq 0,
	\end{equation}
	where $\overline{\alpha}(t)=\alpha (t)+\sum_{0 \leq s \leq t} [\log(1+ \Delta \alpha(s))-\Delta \alpha(s)]$. In particular, if $\bottleneck(t) = 0$ for every $t \geq 0$ (for instance, if~\eqref{eq:no-bottleneck} holds), then for any $s \geq 0$ and $x \geq 0$
	\begin{equation} \label{eq:proba-extinction}
		\lim_{t \to +\infty} \P(X(t) = 0 \mid X(s) = x) = \exp \left(-\frac{x\exp(-\overline \alpha(s))}{\int_{(s,\infty)} \exp(-\overline \alpha(y)) \beta(dy) }\right)
	\end{equation}
	where $X$ is the weak limit of the sequence of processes $(X_n(y), y \geq s)$ given by properties~\ref{property:fdd-convergence} and~\ref{property:tightness} of Theorem~\ref{thm:main}.
\end{Prop}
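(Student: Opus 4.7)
The approach is to reduce~\eqref{eq:dynamics-u} to a Riccati-type equation in the pure diffusion regime, solve it explicitly via the change of variable $w = 1/u$, and then pass to the limit in the Laplace formula to deduce the extinction probability.

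First, by Property~\ref{property:triplet} of Theorem~\ref{thm:main} the function $\widetilde \beta$ is continuous; since $\nu = 0$ forces $\widetilde \beta = \beta$, the function $\beta$ is continuous. The integral against $\nu$ in~\eqref{eq:dynamics-u} vanishes, so Property~\ref{property:u} says that $u_{t,\lambda}$ is the unique c\`adl\`ag function on $[\bottleneck(t),t]$ with $\inf_{s\leq y\leq t}u_{t,\lambda}(y) > 0$ for $\bottleneck(t) < s \leq t$, and satisfying
\begin{equation*}
u_{t,\lambda}(s) = \lambda + \int_{(s,t]} u_{t,\lambda}(y)\,\alpha(dy) - \int_{(s,t]} u_{t,\lambda}(y)^2\,\beta(dy).
\end{equation*}

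Next I would verify that $u^{\ast}(s) := e^{-\overline\alpha(s)}/v(s)$, with $v(s) := \lambda^{-1}e^{-\overline\alpha(t)} + \int_{(s,t]} e^{-\overline\alpha(y)}\,\beta(dy)$, solves this equation. Since $\beta$ is continuous and $\overline\alpha$ is bounded on $[0,t]$, the function $v$ is continuous, strictly positive and decreasing in $s$, with $dv(s) = -e^{-\overline\alpha(s)}\beta(ds)$; in particular $\inf u^{\ast} > 0$ on any compact subinterval. At a discontinuity $s_0$ of $\alpha$, the identity $\Delta\overline\alpha(s_0) = \log(1 + \Delta\alpha(s_0))$ combined with the continuity of $v$ yields $u^{\ast}(s_0-) = (1+\Delta\alpha(s_0))\,u^{\ast}(s_0)$, which matches the jump relation imposed by the Riccati equation. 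Away from the jump times of $\alpha$, a direct Stieltjes chain-rule computation applied to $u^{\ast} = e^{-\overline\alpha}/v$ gives $du^{\ast}(s) = -u^{\ast}(s)\,\alpha^c(ds) + u^{\ast}(s)^2\,\beta(ds)$, which is exactly the continuous part of the Riccati equation (using $dv(s)/v(s) = -u^{\ast}(s)\,\beta(ds)$). Uniqueness in Property~\ref{property:u} then gives~\eqref{eq:solution-Feller}.

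Finally, assuming $\bottleneck(t) = 0$ for every $t$, Theorem~\ref{thm:main} produces a weak limit $X$ on all of $[0,\infty)$, and since $0$ is absorbing the event $\{X(t)=0\}$ is non-decreasing in $t$. Dominated convergence in $\lambda$ yields $\P(X(t)=0 \mid X(s)=x) = \exp(-x\lim_{\lambda\to\infty} u(s,t,\lambda))$, and from the explicit formula the $\lambda^{-1}$ term drops out to give $\lim_{\lambda\to\infty} u(s,t,\lambda) = e^{-\overline\alpha(s)}/\int_{(s,t]} e^{-\overline\alpha(y)}\,\beta(dy)$. Monotone convergence as $t \to \infty$ then delivers~\eqref{eq:proba-extinction}. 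The main obstacle is the correct handling of the jumps of $\alpha$ in the Stieltjes calculus: the definition of $\overline\alpha$ is tailored precisely so that $e^{\Delta\overline\alpha(s)} = 1 + \Delta\alpha(s)$, which absorbs the multiplicative jumps the Riccati equation imposes on $u$.
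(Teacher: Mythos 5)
Your proof is correct and follows essentially the same route as the paper: identify $\beta=\widetilde\beta$ from Theorem~\ref{thm:main} to get continuity, then verify that the proposed formula satisfies the backwards Riccati equation~\eqref{eq:dynamics-u} by checking the jumps (via $e^{\Delta\overline\alpha}=1+\Delta\alpha$, using that $v$ is continuous) and the continuous part (via the Stieltjes chain rule) separately, and finally invoke the uniqueness in property~\ref{property:u} before letting $\lambda\to\infty$ and $t\to\infty$. The minor packaging differences — introducing the auxiliary denominator $v$ and its logarithmic derivative, and explicitly noting the strict positivity and the boundary value $u^{\ast}(t)=\lambda$ — are cosmetic and do not change the argument.
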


\begin{proof}
	Since Assumption~\ref{assumptions} holds, all the conclusions of Theorem~\ref{thm:main} hold. In particular, $\widetilde \beta$ is continuous and since $\nu = 0$ by assumption, $\beta = \widetilde \beta$ and $\beta$ itself is continuous.
	
	Let us now prove~\eqref{eq:solution-Feller}. Fix $t, \lambda > 0$: according to Theorem~\ref{thm:main}, it is enough to check that $G(s) = H(s)$, where $G(s)$ is equal to the right-hand side of~\eqref{eq:solution-Feller} and $H(s) = \lambda + \int_{(s,t]} G d\alpha - \int_{(s,t]} G^2 d\beta$. Observe that $G$ and $H$ may only jump when $\alpha$ does. We first compare the jumps: since $\beta$ is continuous, $s \mapsto \int_{(s,t]} \exp(-\overline \alpha(y)) d\beta (y)$ is continuous and so
	\[ \Delta G(s) = \frac{\exp(-\overline \alpha(s)) - \exp(-\overline \alpha(s-))}{ \lambda^{-1} \exp(-\overline\alpha(t)) + \int_{(s,t]} \exp(-\overline\alpha(y)) \beta(dy)} = G(s) \left( 1 - e^{\Delta \overline \alpha(s)} \right). \]
	
	Since by definition $\Delta \overline \alpha(s) = \log(1 + \Delta \alpha(s))$ we obtain $\Delta G(s) = -G(s) \Delta \alpha(s)$ which coincides with $\Delta H(s)$ (since $s \mapsto \int_{(s,t]} G^2 d\beta$  is continuous). Let us now compare $G$ and $H$ outside the jumps of $\alpha$, so that $\overline \alpha(ds) = \alpha(ds)$: starting from the right-hand side of~\eqref{eq:solution-Feller}, the chain rule for functions of bounded variations gives
	\begin{multline*}
		dG(s) = \frac{- \alpha(ds) \exp(-\overline \alpha(s))}{\lambda^{-1} \exp(-\overline\alpha(t)) + \int_{(s,t]} \exp(-\overline\alpha(y)) \beta(dy)}\\
		+ \frac{\exp(-2\overline \alpha(s)) \beta(ds)}{\left(\lambda^{-1} \exp(-\overline\alpha(t)) + \int_{(s,t]} \exp(-\overline\alpha(y)) \beta(dy)\right)^2},
	\end{multline*}
	i.e., $dG(s) = - G(s) \alpha(ds) + G(s)^2 \beta(ds) = dH(s)$. This proves~\eqref{eq:solution-Feller} from which~\eqref{eq:proba-extinction} follows from the facts that $\P(X(t) = 0 \mid X(s) = x) = \lim_{\lambda \to +\infty} \E(e^{-\lambda X(t)} \mid X(s) = x)$ and that $\E(e^{-\lambda X(t)} \mid X(s) = x) = \exp(-xu(s,t,\lambda))$.
\end{proof}

\subsubsection{Remarks on CSBP with catastrophes}

Theorem~\ref{thm:main} makes it possible to study Galton Watson processes where only few offspring distributions are not near-critical. The simplest example is given by taking $\gamma_n(t) = \lfloor \Gamma_n t \rfloor$ and ${\offdistr}_{i,n} = {\offdistr}_{0,n}$, in such a way that the corresponding sequence of renormalized Galton Watson processes converges to a CSBP. Then, for some $t_0 \geq 0$, one can change ${\offdistr}_{\gamma_n(t_0), n}$ and take its mean equal to $1 + a$. Then $(X_n)$ converges to a process $X$ which is a CSBP on $[0,t_0)$ and on $[t_0,\infty)$ and such that $X(t_0) = (1+a) X(t_0-)$. Such processes with catastrophes have been studied in~\cite{BPMS} with motivations for cell division models. More precisely, CSBP's are then multiplied by some random number described by a Poisson point process, whose associated L\'evy process has finite variations. Theorem~\ref{thm:main} thus yields an alternative way to construct the process and characterize its Laplace exponent (whereas~\cite{BPMS} leverages results on stochastic differential equations with jumps).

Another way to create a discontinuity at a fixed time is to take ${\offdistr}_{\gamma_n(t_0), n} = (1-1/n)\delta_0+(1/n)\delta_n$ as in the example considered in the beginning of Section~\ref{sub:bottleneck}. Again, $(X_n)$ converges to a process $X$ which is a CSBP on $[0,t_0)$ and on $[t_0,\infty)$ and such that $X(t_0) = S(X(t_0-))$ with $(S(x), x \geq 0)$ a Poisson process. Theorem~\ref{thm:main} allows accumulation of such fixed jumps; note that in both cases these jumps may be negative, whereas CSBP's only have positive jumps.

Building on these two simple examples, we expect in general that if $X$ is a time-inhomogeneous Markov process satisfying the branching property, then for each fixed time of discontinuity $t$, there should exist a subordinator $S_t = (S_t(x), x \geq 0)$ such that $X(t) = S_t(X(t-))$. Indeed, preliminary results suggest that the Markov property should imply the existence of such a process $S_t$, while the branching property of $X$ would force $S_t$ to be a subordinator.

\section{Additional notation and preliminary results} \label{sec:additional-notation-preliminary-results}

In this section we gather some notation used throughout the rest of the paper. Of particular importance are the constants defined in Section~\ref{sub:constants}, which will be used repeatedly in the proofs.

\subsection{Additional notation} \label{sub:additional-notation}

First, note that $\alpha_{i,n}$ and $\beta_{i,n}$ can be rewritten in terms of $\nu_{i,n}$ as follows:
\begin{equation} \label{eq:alternative-def}
	\alpha_{i,n} = \int \frac{x}{1+x^2} \nu_{i,n}(dx) \ \text{ and } \ \beta_{i,n} = \frac{1}{2} \int \frac{x^2}{1+x^2} \nu_{i,n}(dx).
\end{equation}

From now on we identify any c\`adl\`ag function of locally finite variation $f$ with its corresponding signed measure, see for instance Chapter~$3$ in Kallenberg~\cite{Kallenberg02:0}. For instance, we will write indifferently $f((s,t])$, $f(s,t]$ or $f(t) - f(s)$ for $0 \leq s \leq t$, as well as $\Delta f(t)$ or $f\{t\}$. Let $g$ and $h$ be defined as follows:
\begin{equation} \label{eq:def-g-h}
	g(x, \lambda) = 1 - e^{-\lambda x} - \frac{\lambda x}{1+x^2} \ \text{ and } \ h(x,\lambda) = g(x,\lambda) + \frac{(\lambda x)^2}{2(1+x^2)}, \ x \in \R, \lambda \geq 0.
\end{equation}

Defining for $x \in \R$
\begin{equation} \label{eq:def-Phi}
	\Phi_1(x) = \frac{e^{-x}-1+x}{x^2} \ \text{ and } \ \Phi_2(x) = \frac{-e^{-x}+1-x+x^2/2}{x^2},
\end{equation}
with $\Phi_1(0) = 1/2$ and $\Phi_2(0) = 0$, it will sometimes be convenient to write
\begin{equation} \label{eq:g-h-Phi}
	g(x,\lambda) = \frac{x^2}{1+x^2} \left(1-e^{-\lambda x} - \lambda^2 \Phi_1(\lambda x)\right) \text{ and } h(x,\lambda) = \frac{x^2}{1+x^2} \left(1-e^{-\lambda x} + \lambda^2 \Phi_2(\lambda x)\right).
\end{equation}

\medskip

For $n \geq 1$ let in the sequel $\mu_n = \tv{\alpha_n} + \beta_n$, i.e.,
\begin{equation} \label{eq:def-mu-n}
	\mu_n(t) = \tv{\alpha_n}(t) + \beta_n(t), \ t \geq 0,
\end{equation}
and for $i \geq 0$ and $n \geq 1$, let
\begin{equation} \label{eq:def-psi-in}
	\psi_{i,n}(\lambda) = u_n(t^n_i, t^n_{i+1}, \lambda) - \lambda = -n\log \left(1 - \frac{1}{n} \int \left(1 - e^{-\lambda x} \right) \nu_{i,n}(d x) \right), \ \lambda \geq 0
\end{equation}
(the second equality is derived after some algebra by starting from the definition of $u_n$ as Laplace exponent of $X_n$). In order to use the approximation $\psi_{i,n}(\lambda) \approx \int(1-e^{-\lambda x}) \nu_{i,n} (dx)$, we introduce the function $\epsilon_{i,n}$ such that
\begin{equation} \label{eq:def-epsilon}
	\psi_{i,n}(\lambda) = \left( 1+\epsilon_{i,n}(\lambda) \right) \int \left( 1 - e^{-\lambda x} \right) \nu_{i,n}(d x),
\end{equation}
with $\epsilon_{i,n}(\lambda) = 0$ when $\int(1-e^{-\lambda x}) \nu_{i,n}(dx) = 0$.
\\

For every $n \geq 1$ and every measurable, positive function $f: [0,\infty) \to (0,\infty)$, define the two measures $\Psi(f)$ and $\Psi_n(f)$ as follows:
\[ \Psi(f)(A) = \int_{A} f(y) \alpha(dy) - \int_{A} f(y)^2 \beta(dy) + \int_{(0,\infty) \times A} h(x, f(y)) \nu (dx \, dy), \ A \in \Bcal, \]
with $\Bcal$ the Borel subsets of $\R$, and
\[ \Psi_n(f)(A) = \sum_{i \geq 1} \indicator{t_{i}^n \in A} \psi_{i-1,n}(f(t_{i}^n)), \ A \in \Bcal. \]

With a slight abuse of notation, we will also consider $\Psi(f)$ and $\Psi_n(f)$ for functions $f$ only defined on a subset of $[0,\infty)$, typically $[\bottleneck(t), t]$. Then we will only consider $\Psi(f)(A)$ or $\Psi_n(f)(A)$ for Borel sets $A$ which are subset of the domain of definition of $f$.

\subsection{Heuristic derivation of~\eqref{eq:dynamics-u}}

The rational for introducing the measure-valued operators~$\Psi_n$ and~$\Psi$ is the following. On the one hand, it follows readily from the various definitions made that~\eqref{eq:dynamics-u} can be rewritten as
\begin{equation} \label{eq:dynamics-u-Psi}
	u(s, t, \lambda) = \lambda + \Psi(u(\, \cdot \, , t, \lambda))((s,t]).
\end{equation}

On the other hand, $\Psi_n$ has been chosen so that $u_n$ satisfies a similar dynamics. To see this, note that from the definition of $u_n$ and the Markov property of $X_n$, we get the following composition rule:
\begin{equation} \label{eq:composition-rule}
	u_n(t_1, t_3, \lambda) = u_n(t_1, t_2, u_n(t_2, t_3, \lambda)), \quad 0 \leq t_1 \leq t_2 \leq t_3, \ \lambda \geq 0.
\end{equation}

\begin{Lem}\label{relun}
	For any $n \geq 1$, $\lambda \geq 0$ and $0 \leq s \leq t$, it holds that
	\begin{equation} \label{eq:dynamics-u-n}
		u_n(s,t,\lambda) = \lambda + \sum_{i=\gamma_n(s)+1}^{ \gamma_n(t)} \psi_{i-1,n}(u_n(t_{i}^n,t,\lambda)) = \lambda + \Psi_n(u_n(\, \cdot \, , t, \lambda))((s,t]).
	\end{equation}
\end{Lem}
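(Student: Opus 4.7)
The plan is to prove the two equalities separately, with the second being essentially a matter of definitions and the first following from iterating the composition rule~\eqref{eq:composition-rule}.

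For the second equality, unpacking the definition of $\Psi_n$ gives
\[ \Psi_n(u_n(\,\cdot\,,t,\lambda))((s,t]) = \sum_{i \geq 1} \indicator{t_i^n \in (s,t]} \, \psi_{i-1,n}(u_n(t_i^n, t, \lambda)). \]
Because $\gamma_n$ is c\`adl\`ag, increasing and onto $\N$, and satisfies $\gamma_n(t_i^n) = i$ together with $t_{\gamma_n(u)}^n \leq u < t_{\gamma_n(u)+1}^n$ for all $u \geq 0$, one checks that $t_i^n \in (s,t]$ if and only if $\gamma_n(s) < i \leq \gamma_n(t)$, which turns the indicator sum into the desired range.

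For the first equality I would use two ingredients. The first is that, by construction, $X_n$ is constant on each interval $[t_i^n, t_{i+1}^n)$, so for every $\mu \geq 0$ the identity $u_n(s, t_{\gamma_n(s)+1}^n, \mu) = u_n(t_{\gamma_n(s)}^n, t_{\gamma_n(s)+1}^n, \mu) = \mu + \psi_{\gamma_n(s),n}(\mu)$ holds, where the last equality is just the definition~\eqref{eq:def-psi-in} of $\psi_{i,n}$. The second is the composition rule~\eqref{eq:composition-rule}. If $\gamma_n(s) = \gamma_n(t)$, both sides reduce to $\lambda$ (the sum is empty and $X_n(s) = X_n(t)$). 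Otherwise, setting $N = \gamma_n(t) - \gamma_n(s)$ and $s_k = t_{\gamma_n(s)+k}^n$ for $1 \leq k \leq N$ (with the convention $s_0 = s$), applying~\eqref{eq:composition-rule} at each $s_k$ and combining with the one-generation identity gives
\[ u_n(s_k, t, \lambda) - u_n(s_{k+1}, t, \lambda) = \psi_{\gamma_n(s)+k,n}(u_n(s_{k+1}, t, \lambda)), \quad 0 \leq k \leq N-1. \]
Summing this telescoping identity and using $u_n(s_N, t, \lambda) = u_n(t_{\gamma_n(t)}^n, t, \lambda) = \lambda$ (again because $X_n$ is constant on $[t_{\gamma_n(t)}^n, t_{\gamma_n(t)+1}^n) \ni t$), then reindexing via $i = \gamma_n(s)+k+1$, produces exactly the claimed sum.

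The whole argument is essentially bookkeeping around the Markov/branching structure of $X_n$: there is no genuine obstacle, the only points requiring some care being the correspondence $\{t_i^n\} \cap (s,t] = \{t_i^n : \gamma_n(s) < i \leq \gamma_n(t)\}$ mentioned above and the consistent handling of the boundary terms $s$ and $t$ that may fall strictly inside generation intervals.
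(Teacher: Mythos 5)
Your proof is correct and follows essentially the same route as the paper's: a telescoping decomposition powered by the composition rule~\eqref{eq:composition-rule} and the identity $u_n(t_i^n, t_{i+1}^n, \mu) = \mu + \psi_{i,n}(\mu)$ from the definition~\eqref{eq:def-psi-in}, together with the observation that $u_n(s,t,\lambda) = u_n(t_{\gamma_n(s)}^n,t,\lambda)$ and $u_n(t_{\gamma_n(t)}^n,t,\lambda)=\lambda$ by constancy of $X_n$ on each generation interval. You simply spell out more carefully the bookkeeping (the equivalence $t_i^n \in (s,t] \Leftrightarrow \gamma_n(s) < i \leq \gamma_n(t)$) that the paper leaves implicit.
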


\begin{proof}
	The second equality follows readily from the definition of $\Psi_n$, while the first one can be derived as follows:
	\begin{align*}
		u_n(s,t,\lambda) = u_n(t^n_{\gamma_n(s)}, t, \lambda) & = \lambda + \sum_{i=\gamma_n(s)}^{\gamma_n(t)-1} \left( u_n(t^n_i, t, \lambda) - u_n(t^n_{i+1}, t, \lambda) \right)\\
		& \stackrel{\text{(i)}}{=} \lambda + \sum_{i=\gamma_n(s)}^{\gamma_n(t)-1} \left( u_n(t^n_i, t^n_{i+1}, u_n(t^n_{i+1}, t, \lambda)) - u_n(t^n_{i+1}, t, \lambda) \right)\\
		& \stackrel{\text{(ii)}}{=} \lambda + \sum_{i=\gamma_n(s)}^{\gamma_n(t)-1} \psi_{i,n}(u_n(t^n_{i+1}, t, \lambda))
	\end{align*}
	where~(i) comes from the composition rule~\eqref{eq:composition-rule} and~(ii) comes from the first equality in~\eqref{eq:def-psi-in}.
\end{proof}

From~\eqref{eq:dynamics-u-n} we can intuitively recover~\eqref{eq:dynamics-u-Psi} (i.e.,~\eqref{eq:dynamics-u}). Indeed, in view of the second equality in~\eqref{eq:def-psi-in} and of the approximation $\log(1-x) \approx -x$, it is reasonable to expect
\[ \psi_{i,n}(\lambda) \approx \int \left(1 - e^{-\lambda x} \right) \nu_{i,n}(d x) = \lambda \alpha_{i,n} - \lambda^2 \beta_{i,n} + \int_{(0,\infty)} h(x, \lambda) \nu_{i,n} (dx) \]
(recall~\eqref{eq:alternative-def} and the definition~\eqref{eq:def-g-h} of $h$ for the last equality) and so summing over $i = 0, \ldots, \gamma_n(t)-1$ yields through~\eqref{eq:dynamics-u-n} the approximation
\begin{multline*}
	u_n(s,t,\lambda) \approx \lambda + \int_{(s,t]} u_n(y, t, \lambda) \alpha_{n}(dy) - \int_{(s,t]} (u_n(y, t, \lambda))^2 \beta_{n}(dy)\\
	+ \int_{(0,\infty) \times (s,t]} h(x, u_n(y, t, \lambda)) \nu_{n} (dx \, dy).
\end{multline*}

Since $(\alpha_n, \beta_n, \nu_n)$ is assumed to converge toward $(\alpha, \beta, \nu)$, this last approximation suggests that any limit $u(s,t,\lambda)$ of the sequence $(u_n(s,t,\lambda))$ should indeed satisfy~\eqref{eq:dynamics-u-Psi}.

\subsection{Key constants} \label{sub:constants}

For any $n \geq 1$, $t, \lambda, C \geq 0$, $s \leq t$, $N \geq 1$, $0 < \eta < T$, let:
\begin{equation} \label{eq:c_1}
	c_1(C) = C + c'_1(C) \ \text{ with } \ c_1'(C) = \sup \left\{ \frac{2 \lvert g(x, \lambda)\rvert (1+x^2)}{x^2} : x \geq -1, 0 \leq \lambda \leq C \right\},
\end{equation}
\begin{equation} \label{eq:c2+c3}
	c_2(\eta, T) = \sup_{\substack{\eta \leq y,y' \leq T \\ 0 \leq x}} \left\lvert \frac{h(x,y) - h(x,y')}{(y-y') x^2 / (1+x^2)} \right\rvert \text{ and } c_3(\eta, T) = 1 + T + c_2(\eta, T),
\end{equation}
\begin{equation} \label{eq:def-c^epsilon}
	\overline c_{n,t}^\epsilon(C) = \sup \left \{ \vert \epsilon_{i,n}(\lambda) \vert : 0 \leq i < \gamma_n(t), 0 \leq \lambda \leq C \right\},
\end{equation}
\begin{equation} \label{eq:def-overline-c^u}
	\overline c_{t, \lambda}^u = \sup \left\{ u_n(s,t,\lambda) : n \geq 1, 0 \leq s \leq t \right\},
\end{equation}
\begin{equation} \label{eq:def-Delta}
	\Delta_{t,\lambda}^u = \left( 1 + \sup_{n \geq 1} \left\{ \overline c_{n,t}^\epsilon \big( \overline c_{t, \lambda}^u \big) \right\} \right) c_1\big(\overline c_{t,\lambda}^u\big),
\end{equation}
\begin{equation} \label{eq:def-underline-c^u}
	\underline c_{s, t, \lambda}^u(N) = \inf \left\{ u_n(y, t, \lambda): s \leq y \leq t, n \geq N \right\}
\end{equation}
and $N_{s,t,\lambda} = \inf\big\{ N \geq 1: \underline c_{s,t,\lambda}^u(N) > 0 \big\}$. When $N_{s,t,\lambda}$ is finite, we also define
\begin{equation} \label{eq:def-underline-c^u-2}
	\underline c^u_{s,t,\lambda} = \underline c^u_{s,t,\lambda}(N_{s,t,\lambda})
\end{equation}
in which case $\underline c_{s,t,\lambda}^u > 0$. We defer the proofs that these constants are finite to Appendix~\ref{appendix:constants} and now use these constants to prove key results. Of particular importance are Lemma~\ref{lemma:inequality-delta}, which controls fluctuations of $u_n(s,t,\lambda)$ in $s$, and Lemma~\ref{lemma:inf} which allows to rewrite the time of last bottleneck $\bottleneck(t)$ in a more convenient form.

\begin{Lem} \label{lemma:inequality-c1}
	For any $C \geq 0$, $n \geq 1$ and $i \geq 0$,
	\begin{equation} \label{eq:ineq-g-2}
		\sup_{0 \leq \lambda \leq C} \left\lvert \int \left( 1 - e^{-\lambda x} \right) \nu_{i,n}(dx) \right\rvert \leq c_1(C) \mu_n(t_{i}^n, t_{i+1}^n].
	\end{equation}
\end{Lem}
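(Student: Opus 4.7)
The plan is to decompose $1-e^{-\lambda x}$ into a linear piece that pairs with $\alpha_{i,n}$ and a remainder controlled by $g$, and then match these to $\|\alpha_n\|$ and $\beta_n$ on the interval $(t_i^n,t_{i+1}^n]$.

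More precisely, I would start from the algebraic identity
\[ 1 - e^{-\lambda x} = \frac{\lambda x}{1+x^2} + g(x,\lambda), \]
which is just the definition~\eqref{eq:def-g-h} of $g$. Integrating against $\nu_{i,n}$ and using the representation~\eqref{eq:alternative-def} of $\alpha_{i,n}$ then gives
\[ \int (1-e^{-\lambda x}) \nu_{i,n}(dx) = \lambda \alpha_{i,n} + \int g(x,\lambda) \nu_{i,n}(dx). \]
Since $\xi_{i,n}$ is a non-negative integer, the random variable $\overline\xi_{i,n}$ takes values in $[-1/n,\infty) \subset [-1,\infty)$, so the measure $\nu_{i,n}$ is supported in $[-1,\infty)$. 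Hence on the support of $\nu_{i,n}$, the definition~\eqref{eq:c_1} of $c_1'(C)$ yields the pointwise bound $|g(x,\lambda)| \leq \tfrac{1}{2} c_1'(C)\, x^2/(1+x^2)$ uniformly in $\lambda \in [0,C]$. Integrating and using the representation~\eqref{eq:alternative-def} of $\beta_{i,n}$ gives
\[ \left\lvert \int g(x,\lambda)\nu_{i,n}(dx) \right\rvert \leq c_1'(C)\, \beta_{i,n}. \]
Combining with the trivial bound $|\lambda \alpha_{i,n}| \leq C|\alpha_{i,n}|$ produces the estimate $|\int (1-e^{-\lambda x})\nu_{i,n}(dx)| \leq C|\alpha_{i,n}| + c_1'(C)\beta_{i,n}$, uniformly in $\lambda \in [0,C]$.

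It then remains to convert $|\alpha_{i,n}|$ and $\beta_{i,n}$ into contributions to $\mu_n$ on the interval $(t_i^n,t_{i+1}^n]$. From the definitions $\alpha_n(t) = \sum_{j<\gamma_n(t)} \alpha_{j,n}$ and the fact that $\gamma_n$ jumps by one, $\alpha_n$ has a single jump of size $\alpha_{i,n}$ on $(t_i^n,t_{i+1}^n]$ and is otherwise constant there, so $\|\alpha_n\|(t_i^n,t_{i+1}^n] = |\alpha_{i,n}|$. Similarly, because $\beta_n$ is increasing, $\beta_n(t_i^n,t_{i+1}^n] = \beta_{i,n}$. Plugging these equalities into the previous bound and using $\mu_n = \|\alpha_n\| + \beta_n$ together with $c_1(C) = C + c_1'(C) \geq \max(C,c_1'(C))$, we conclude
\[ \left\lvert \int (1-e^{-\lambda x})\nu_{i,n}(dx) \right\rvert \leq c_1(C)\, \mu_n(t_i^n,t_{i+1}^n], \]
as desired. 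The only substantive point is verifying that $c_1'(C) < \infty$ — i.e., that $|g(x,\lambda)|(1+x^2)/x^2$ is bounded on $[-1,\infty)\times[0,C]$ — which is routine by Taylor expansion at $x=0$ (where $g \sim -\lambda^2 x^2/2$) and by the trivial bound $|g| \leq 2$ for $|x|$ bounded away from $0$, together with continuity on the compact set $\{-1\} \cup \{|x|\leq 1\}$ or $\{|x|\geq 1\}$.
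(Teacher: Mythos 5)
Your proof is correct and follows essentially the same route as the paper's: decompose $1-e^{-\lambda x}$ into $\lambda x/(1+x^2)+g(x,\lambda)$, identify the first term with $\lambda\alpha_{i,n}$, bound $|g|$ by $c_1'(C)\,x^2/(2(1+x^2))$ on the support $[-1,\infty)$, and integrate to get $\leq C|\alpha_{i,n}|+c_1'(C)\beta_{i,n}\leq c_1(C)\,\mu_n(t_i^n,t_{i+1}^n]$. You spell out explicitly the identification $\tv{\alpha_n}(t_i^n,t_{i+1}^n]=|\alpha_{i,n}|$ and $\beta_n(t_i^n,t_{i+1}^n]=\beta_{i,n}$, which the paper leaves implicit, and you add a (slightly informal, but unneeded here) verification that $c_1'(C)<\infty$, which the paper instead defers to Lemma~\ref{lemma:c_1+c_2+c_2} via the representation~\eqref{eq:g-h-Phi}.
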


\begin{proof}
	By definition~\eqref{eq:def-g-h} of $g$, we have
	\[ \int \left( 1 - e^{-\lambda x} \right) \nu_{i,n}(dx) = \lambda \alpha_{i,n} + \int g(x, \lambda) \nu_{i,n}(dx) \]
	so that $\lvert\int(1 - e^{-\lambda x})\nu_{i,n}(dx)\rvert \leq \lambda \lvert\alpha_{i,n}\rvert + \int \lvert g(x, \lambda)\rvert \nu_{i,n}(dx)$. Since
	\[ \lvert g(x,\lambda)\rvert \leq c_1'(C) \frac{x^2}{2(1+x^2)} \]
	for all $x \geq -1$ and $0 \leq \lambda \leq C$ by definition of $c_1'(C)$, we get~\eqref{eq:ineq-g-2}.
\end{proof}

\begin{Lem} \label{lemma:inequality-delta}
	For any $n \geq 1$, $\lambda, t > 0$ and $0 \leq s \leq s' \leq t$,
	\begin{equation} \label{eq:variation}
		\left\lvert u_n(s,t,\lambda) - u_n(s',t,\lambda) \right\rvert \leq \Delta_{t,\lambda}^u \mu_n(s,s'].
	\end{equation}
\end{Lem}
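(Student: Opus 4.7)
The plan is to combine the discrete dynamical system of Lemma~\ref{relun} with Lemma~\ref{lemma:inequality-c1} and the representation~\eqref{eq:def-epsilon} of $\psi_{i,n}$.

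First I would apply Lemma~\ref{relun} at both $s$ and $s'$ and subtract; since $\gamma_n(s) \leq \gamma_n(s')$, the two sums telescope into
\[
u_n(s,t,\lambda) - u_n(s',t,\lambda) = \sum_{i = \gamma_n(s)+1}^{\gamma_n(s')} \psi_{i-1,n}\bigl(u_n(t_i^n, t, \lambda)\bigr),
\]
so by the triangle inequality it suffices to produce an upper bound on each term $\lvert \psi_{i-1,n}(u_n(t_i^n,t,\lambda))\rvert$.

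Next I would use~\eqref{eq:def-epsilon} to write $\psi_{i-1,n}(\rho) = (1 + \epsilon_{i-1,n}(\rho)) \int (1 - e^{-\rho x}) \nu_{i-1,n}(dx)$, applied at $\rho = u_n(t_i^n,t,\lambda)$. By~\eqref{eq:def-overline-c^u} this argument lies in $[0, \overline c_{t,\lambda}^u]$, and since $i-1 < \gamma_n(t)$ the definition~\eqref{eq:def-c^epsilon} bounds the $\epsilon$-factor by $\overline c_{n,t}^\epsilon(\overline c_{t,\lambda}^u)$, while Lemma~\ref{lemma:inequality-c1} bounds the integral in absolute value by $c_1(\overline c_{t,\lambda}^u)\,\mu_n(t_{i-1}^n, t_i^n]$. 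Taking the supremum over $n$ of the $\epsilon$-factor and recalling the definition~\eqref{eq:def-Delta} of $\Delta_{t,\lambda}^u$ then yields the per-term estimate
\[
\bigl\lvert \psi_{i-1,n}\bigl(u_n(t_i^n,t,\lambda)\bigr)\bigr\rvert \leq \Delta_{t,\lambda}^u\,\mu_n(t_{i-1}^n, t_i^n].
\]

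Summing over $i$ in the telescoped expression then reduces the claim to the identity
\[
\sum_{i=\gamma_n(s)+1}^{\gamma_n(s')} \mu_n(t_{i-1}^n, t_i^n] = \mu_n(s, s'].
\]
This follows from the fact that, by definition of $\mu_n = \tv{\alpha_n} + \beta_n$, the measure $\mu_n$ is carried by the grid $\{t_i^n\}_{i \geq 1}$ and puts mass $\lvert \alpha_{i-1,n}\rvert + \beta_{i-1,n}$ at $t_i^n$, combined with the sandwich $t_{\gamma_n(s)}^n \leq s < t_{\gamma_n(s)+1}^n$ and its analogue at $s'$, which together show that the grid points falling in $(s,s']$ are precisely those indexed by $\gamma_n(s)+1 \leq i \leq \gamma_n(s')$. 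There is no genuine obstacle here: the proof is a direct telescoping computation, and the only step requiring any care is this last bookkeeping that matches the sum of the local masses to $\mu_n(s,s']$.
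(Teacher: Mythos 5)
Your proposal is correct and follows essentially the same route as the paper's proof: expand via Lemma~\ref{relun}, factor $\psi_{i-1,n}$ through~\eqref{eq:def-epsilon}, bound the argument by $\overline c_{t,\lambda}^u$, control the $\epsilon$-factor by $\overline c_{n,t}^\epsilon(\overline c_{t,\lambda}^u)$ and the integral by Lemma~\ref{lemma:inequality-c1}, then sum. The only cosmetic difference is that you are slightly more explicit than the paper about taking the supremum over $n$ when passing to $\Delta_{t,\lambda}^u$ and about the bookkeeping that collapses $\sum_i \mu_n(t_{i-1}^n,t_i^n]$ to $\mu_n(s,s']$; both points are implicit in the paper's one-line display.
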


\begin{proof}
	Lemma~\ref{relun} and the definition of $\epsilon_{i,n}$ give
	\begin{multline*}
		\left\lvert u_n(s,t,\lambda) - u_n(s',t,\lambda) \right\rvert\\
		\leq \sum_{i=\gamma_n(s)+1}^{\gamma_n(s')} \left( 1 + \left\lvert \epsilon_{i-1,n}(u_n(t_i^n, t, \lambda)) \right\rvert \right) \left\lvert \int (1-e^{-x u_n(t_i^n, t, \lambda)}) \nu_{i-1,n}(dx) \right\rvert.
	\end{multline*}

	Since $0 \leq t_i^n \leq t$ for any $0 \leq i \leq \gamma_n(t)$, we have $u_n(t_i^n, t, \lambda) \leq \overline c_{t,\lambda}^u$ and in particular $\lvert\epsilon_{i-1,n}(u_n(t_i^n, t, \lambda))\rvert \leq \overline c_{n,t}^\epsilon ( \overline c_{t, \lambda}^u )$ for all $\gamma_n(s) < i \leq \gamma_n(s')$. Using in addition~\eqref{eq:ineq-g-2} with $C = \overline c_{t,\lambda}^u$, we obtain
	\[
		\left\lvert u_n(s,t,\lambda) - u_n(s',t,\lambda) \right\rvert \leq \sum_{i = \gamma_n(s)+1}^{\gamma_n(s')} \left( 1 + \overline c_{n,t}^\epsilon ( \overline c_{t, \lambda}^u ) \right) c_1(\overline c_{t,\lambda}^u) \mu_n(t_{i-1}^n, t_{i}^n] = \Delta_{t, \lambda}^u \mu_n(s, s']
	\]
	which gives~\eqref{eq:variation}.
\end{proof}

\begin{Rk} \label{rk:relative-compactness}
	When $\Delta^u_{t, \lambda}$ is finite and $\mu_n \to \mu$ (in Skorohod topology),~\eqref{eq:variation} implies that the sequence of functions $(u_n(\, \cdot \,,t,\lambda), n \geq 1)$ on $[0,t]$ is relatively compact in view of the Arzel\`a-Ascoli theorem. These assumptions are satisfied when Assumption~\ref{assumptions} holds, so that the convergence $u_n(s,t,\lambda) \to u(s,t,\lambda)$ of Theorem~\ref{thm:main} actually holds in a functional sense. However, we will not make use of this result.
\end{Rk}

In the next lemma we provide an alternative expression for $\bottleneck(t)$, defined so far as $\sup \Scal(t)$ with
\[ \Scal(t) = \left\{s \leq t : \lim_{\varepsilon \to 0} \liminf_{n\rightarrow\infty} \inf_{s \leq y \leq t} \P\left(X_n(t) > \varepsilon \ \big \vert \ X_n(y)=1 \right) = 0 \right\}. \]

More precisely, we show that $\bottleneck(t) = \sup \Scal(t,\lambda)$ where
\[ \Scal(t, \lambda) = \left\{s \leq t : \liminf_{n \rightarrow \infty} \inf_{s \leq y \leq t} u_n(y,t,\lambda)=0\right\}. \]

In particular, we deduce that $u_n$ past $\bottleneck(t)$ is uniformly bounded away from $0$ (i.e., $N_{s,t,\lambda}$ is finite and $\underline c^u_{s,t,\lambda} > 0$ for $s > \bottleneck(t)$), which is in line with the intuition behind $\bottleneck(t)$ being the last bottleneck before time $t$.

\begin{Lem} \label{lemma:inf}
	Fix $t > 0$ and assume that the sequences $(\tv{\alpha_n}(t), n \geq 1)$ and $(\beta_n(t), n \geq 1)$ are bounded. Then $\bottleneck (t) = \sup \Scal(t, \lambda)$ for every $\lambda > 0$ and $N_{s,t,\lambda}$ is finite for every $s \in (\bottleneck(t), t]$.
	
	In particular, if $(\tv{\alpha_n}(t), n \geq 1)$ and $(\beta_n(t), n \geq 1)$ are bounded for every $t \geq 0$, then the function $t \mapsto \bottleneck(t)$ is increasing.
\end{Lem}

\begin{proof}
	Fix in the rest of the proof $t, \lambda > 0$ and let $s \leq t$: the following statements are equivalent, which proves that $\Scal(t) = \Scal(t, \lambda)$ and implies $\bottleneck(t) = \sup \Scal(t, \lambda)$:
	\begin{enumerate}[(i)]
		\item $\liminf_{n \rightarrow \infty} \inf_{s \leq y \leq t} u_n(y,t,\lambda)=0$;
		\item there exist sequences $(n(k))$ and $(y_k)$ such that $y_k \in [s,t]$ for each $k \geq 1$ and
		\[ \lim_{k \to +\infty} n(k) = +\infty \ \text{ and } \lim_{k \to +\infty} u_{n(k)}(y_k, t, \lambda) = 0; \]
		\item \label{item:cond-laplace} there exist sequences $(n(k))$ and $(y_k)$ such that $y_k \in [s,t]$ for each $k \geq 1$ and
		\[ \lim_{k \to +\infty} n(k) = +\infty \ \text{ and for every } v > 0, \lim_{k \to +\infty} \E\left(e^{-v X_{n(k)}(t)} \, \lvert \, X_{n(k)}(y_k) = 1\right) = 1; \]
		\item \label{item:cond-conv-distrib} there exist sequences $(n(k))$ and $(y_k)$ such that $y_k \in [s,t]$ for each $k \geq 1$ and for any $\varepsilon > 0$,
		\[ \lim_{k \to +\infty} n(k) = +\infty \text{ and } \lim_{k \to +\infty} \P \left( X_{n(k)}(t) > \varepsilon \, \rvert \, X_{n(k)}(y_k) = 1 \right) = 0; \]
		\item \label{item:cond-last} $\lim_{\varepsilon \to 0} \liminf_{n\rightarrow\infty} \inf_{s \leq y \leq t} \P\big(X_n(t) > \varepsilon \ \vert \ X_n(y) = 1 \big)=0.$
	\end{enumerate}
	
	The equivalence between~\eqref{item:cond-laplace} and~\eqref{item:cond-conv-distrib} relies on the fact that both conditions are equivalent to the following one: the sequence of random variables $(X_{n(k)}(t), k \geq 1)$ under $\P(\, \cdot \mid X_{n(k)}(y_k) = 1)$ converges in distribution to $0$. Let us also explain the last equivalence. The condition~\eqref{item:cond-conv-distrib} implies that
	\[ \liminf_{n\rightarrow\infty} \inf_{s \leq y \leq t} \P\big(X_n(t) > \varepsilon \ \vert \ X_n(y) = 1 \big)=0 \]
	for every $\varepsilon > 0$, which is stronger than~\eqref{item:cond-last}. Now, assuming that~\eqref{item:cond-last} holds, one can find sequences $(\varepsilon_k)$, $(n(k))$ and $(y_k)$ such that $y_k \in [s,t]$ and
	\[ \lim_{k \to +\infty} \varepsilon_k = 0, \lim_{k \to +\infty} n(k) = +\infty \ \text{ and } \ \lim_{k \to +\infty} \P \left( X_{n(k)}(t) > \varepsilon_k \mid X_{n(k)}(y_k) = 1 \right) = 0. \]
	
	Then the sequences $(n(k))$ and $(y_k)$ satisfy~\eqref{item:cond-conv-distrib} since for any $\varepsilon > 0$,
	\[ \P \left( X_{n(k)}(t) > \varepsilon \mid X_{n(k)}(y_k) = 1 \right) \leq \P \left( X_{n(k)}(t) > \varepsilon_k \mid X_{n(k)}(y_k) = 1 \right) \]
	for $k$ large enough, since $\varepsilon_k \to 0$. This proves $\bottleneck(t) = \sup \Scal(t, \lambda)$, which implies that $N_{s,t,\lambda}$ is finite when $\bottleneck(t) < s \leq t$ since from the definition~\eqref{eq:def-underline-c^u} of $\underline c^u_{s,t,\lambda}(N)$,
	\[ \lim_{N \to +\infty} \underline c_{s,t,\lambda}^u(N) = \liminf_{n \rightarrow \infty} \inf_{s \leq y \leq t} u_n(y,t,\lambda). \]
	
	We now assume that $(\tv{\alpha_n}(t))$ and $(\beta_n(t))$ are bounded for every $t \geq 0$, and prove that $\bottleneck(\, \cdot \,)$ is an increasing function. Let $t' > t$: we will show that $\Scal(t, \overline c_{t',\lambda}^u) \subset \Scal(t', \lambda)$, which proves that $\bottleneck(t) \leq \bottleneck(t')$. So consider $s \in \Scal(t, \overline c_{t',\lambda}^u)$, i.e., $s \leq t$ with
	\[ \liminf_{n \to +\infty} \inf_{s \leq y \leq t} u_n(y,t,\overline c_{t',\lambda}^u) = 0. \]
	
	Then $s \leq t'$, and the composition rule~\eqref{eq:composition-rule} together with the monotonicity of $u_n$ in $\lambda$ give for any $s \leq y \leq t$
	\[ u_n(y,t',\lambda) = u_n(y,t,u_n(t,t',\lambda)) \leq u_n(y,t,\overline c^u_{t',\lambda}) \]
	which entails
	\[ \liminf_{n \to +\infty} \inf_{s \leq y \leq t'} u_n(y,t', \lambda) \leq \liminf_{n \to +\infty} \inf_{s \leq y \leq t} u_n(y,t,\overline c_{t',\lambda}^u). \]
	
	Since this last quantity is equal to $0$ this proves that $s \in \Scal(t', \lambda)$ and gives the result.
\end{proof}

\section{Proof of Theorem~\ref{thm:main}} \label{proofThm1}

In the rest of this section, we assume that Assumption~\ref{assumptions} holds and we consider the measures $\alpha$, $\beta$ and $\nu$ given there. Recall that $\mu_n = \tv{\alpha_n} + \beta_n$, and define analogously $\mu = \tv{\alpha} + \beta$, in particular we have
\begin{equation} \label{eq:conv-mu}
	\lim_{n \to +\infty} \mu_n(t) = \mu(t).
\end{equation}

Define also the measure $\widetilde \mu$ by
\[ \widetilde \mu(A) = \mu(A) + \int_{(0,\infty) \times A} \frac{x^2}{1+x^2} \nu(dx \, dy), \ A \in \Bcal. \]

We prove the four properties~\ref{property:triplet}--\ref{property:tightness} in Sections~\ref{sub:property-triplet} to~\ref{sub:property-tightness}.

\subsection{Proof of property~\ref{property:triplet}} \label{sub:property-triplet}

That $\Delta \alpha(t) \geq -1$ is a direct consequence of~\refAtwo\ since $\alpha_{i,n} \geq -1$ for every $i \geq 0$ and $n \geq 1$. Moreover, note that
\begin{equation} \label{eq:bound-proof-thm}
	\int_{(0,\infty) \times I_{s,t}} \frac{x^2}{2(1+x^2)} \nu(dx \, dy) \leq \beta(I_{s,t}), 0 \leq s \leq t,
\end{equation}
where $I_{s,t} = (s,t]$ or $I_{s,t} = (s,t)$. Indeed, for $I_{s,t} = (s,t]$
\begin{align*}
	\int_{(0,\infty) \times (s,t]} \frac{x^2}{2(1+x^2)} \nu(dx \, dy) & \mathop{=}^{\text{(i)}} \int_{(0,\infty)} \frac{x}{(1+x^2)^2} \nu([x, \infty) \times (s,t]) dx\\
	& \mathop{=}^{\text{(ii)}} \int_{(0,\infty)} \frac{x}{(1+x^2)^2} \liminf_{n \to +\infty} \nu_n([x, \infty) \times (s,t]) dx\\
	& \mathop{\leq}^{\text{(iii)}} \liminf_{n \to +\infty} \int_{(0,\infty)} \frac{x}{(1+x^2)^2} \nu_n([x, \infty) \times (s,t]) dx\\
	& \mathop{=}^{\text{(iv)}} \liminf_{n \to +\infty} \int_{(0,\infty) \times (s,t]} \frac{x^2}{2(1+x^2)} \nu_n(dx \, dy) \mathop{\leq}^{\text{(v)}} \liminf_{n \to +\infty} \left( \beta_n((s,t]) \right)
\end{align*}
using Fubini's theorem for~(i) and~(iv), the assumption~\refAone\ for~(ii) (using also that the set $\{ x: \nu(\{x\} \times (s,t]) > 0 \}$ has zero Lebesgue measure), Fatou's lemma for~(iii) and finally the definition of $\nu_n$ and $\beta_n$ for~(v). To get the result for $I_{s,t} = (s,t)$ apply~\eqref{eq:bound-proof-thm} to $(s,t']$ and let $t' \uparrow t$. The inequality~\eqref{eq:bound-proof-thm} has two direct consequences: $\int_{(0,\infty) \times (0,t]} (1 \wedge x^2) \nu(dx \, dy)$ is finite and the function $\widetilde \beta$ is increasing. Thus to conclude the proof of property~\ref{property:triplet}, it remains to show that $\widetilde \beta$ is continuous, i.e., $\Delta \widetilde \beta(t) = 0$.

First, by letting $s \uparrow t$ in~\eqref{eq:bound-proof-thm} with $I_{s,t} = (s,t]$ we see that $\Delta \widetilde \beta(t) = 0$ when $\Delta \beta(t) = 0$, so we only have to consider the case where $\Delta \beta(t) > 0$. In this case, the assumption~\refAtwo\ implies that $\beta_{\gamma_n(t), n} \to \Delta \beta(t)$ and that $\nu_{\gamma_n(t),n}([x,\infty)) \to \nu( [x, \infty) \times \{t\})$ for every $x>0$ such that $\nu(\{x \} \times \{t\})=0$. Then for any $d>0$ with $\nu(\{d\} \times \{t\}) = 0$, we get by weak convergence of probability measures (since all the measures restricted to $[d,\infty)$ have finite mass) and the dominated convergence theorem
\begin{equation} \label{eq:d}
	\lim_{n \to +\infty} \int_{(d, \infty)} \frac{x^2}{2(1+x^2)} \nu_{\gamma_n(t),n}(d x) = \int_{(d, \infty) \times \{t\}} \frac{x^2}{2(1+x^2)} \nu(dx \, dy).
\end{equation}

On the other hand, we have
\[ \int_{[-1/n, d]} \frac{x^2}{1+x^2} \nu_{\gamma_n(t),n}(d x) \leq \left( d + \frac{1}{n} \right) \int_{[-1/n, d]} \frac{\vert x\rvert}{1+x^2} \nu_{\gamma_n(t),n}(d x) \]
and from the definition of $\alpha_{\gamma_n(t), n}$ we see that
\begin{align*}
	\int_{[-1/n, d]} \frac{\lvert x\rvert}{1+x^2} \nu_{\gamma_n(t),n}(d x) & = \int_{[-1/n, d]} \frac{x}{1+x^2} \nu_{\gamma_n(t),n}(d x) + \frac{2/n}{1+(1/n)^2} \nu_{\gamma_n(t), n}\{-1/n\}\\
	& \leq \alpha_{\gamma_n(t),n} + \frac{2 n^2}{1+n^2}
\end{align*}
using that $\nu_{\gamma_n(t), n}\{-1/n\} \leq \nu_{\gamma_n(t), n}(\R) = n$ for the last inequality. Since $\lvert\alpha_{\gamma_n(t), n}\rvert \leq \tv{\alpha_n}(t)$ and $\tv{\alpha_n}(t) \to \tv{\alpha}(t)$, we obtain from the two last displays
\[
	\lim_{d \to 0} \limsup_{n \to +\infty} \int_{[-1/n, d]} \frac{x^2}{1+x^2} \nu_{\gamma_n(t),n}(d x) = 0.
\]

Combined with~\eqref{eq:d}, this gives
\begin{align*}
	\lim_{n \to +\infty} \int \frac{x^2}{2(1+x^2)} \nu_{\gamma_n(t),n}(d x) & = \lim_{d \to 0} \int_{(d, \infty) \times \{t\}} \frac{x^2}{2(1+x^2)} \nu(dx \, dy)\\
	& = \int_{(0, \infty) \times \{t\}} \frac{x^2}{2(1+x^2)} \nu(dx \, dy).
\end{align*}

Since $2 \beta_{\gamma_n(t), n} = \int \frac{x^2}{1+x^2} \nu_{\gamma_n(t),n}(d x)$ and $\beta_{\gamma_n(t), n} \to \Delta \beta(t)$ this concludes the proof of property~\ref{property:triplet}.

\subsection{Proof of property~\ref{property:u}} \label{sub:property-u}

The (classical) idea is to use a Lipschitz property satisfied by $\Psi$, combined with Gronwall's lemma. The Lipschitz property of $\Psi$ takes the following form (remember the constant $c_3$ defined in~\eqref{eq:c2+c3}): for any measurable and positive functions $f_1$ and $f_2$ and any $A \in \Bcal$, we have
\begin{equation} \label{eq:lipschitz}
	\left\lvert \Psi(f_1)(A) - \Psi(f_2)(A) \right\rvert \leq c_3\left( \inf_A f_1 \wedge \inf_A f_2 , \sup_A f_1 + \sup_A f_2 \right) \int_A \lvert f_1 - f_2\rvert d \widetilde \mu.
\end{equation}

Indeed, let $\eta = \inf_A f_1 \wedge \inf_A f_2$ and $T = \sup_A f_1 + \sup_A f_2$: then by definition of $\Psi$ we have
\begin{multline*}
	\left\lvert \Psi(f_1)(A) - \Psi(f_2)(A) \right\rvert \leq \int_A \lvert f_1-f_2\rvert d\tv{\alpha} + \int_A \left\lvert f_1^2-f_2^2\right\rvert d\beta\\
	+ \int_{(0,\infty) \times A} \left\lvert h(x, f_1(y)) - h(x, f_2(y)) \right\rvert \nu(dx\,dy).
\end{multline*}

Using $\lvert f_1^2 - f_2^2\rvert = \lvert f_1 - f_2\rvert (f_1 + f_2)$ and plugging in the constant $c_2$, we obtain
\begin{multline*}
	\left\lvert \Psi(f_1)(A) - \Psi(f_2)(A) \right\rvert \leq \int_A \lvert f_1-f_2\rvert d\tv{\alpha} + T \int_A \lvert f_1-f_2\rvert d\beta\\
	+ c_2(\eta, T) \int_{(0,\infty) \times A} \left\lvert f_1(y) - f_2(y) \right\rvert \frac{x^2}{1+x^2} \nu(dx\,dy) \leq c_3(\eta, T) \int_{A} \lvert f_1 - f_2\rvert d \widetilde \mu,
\end{multline*}
which establishes~\eqref{eq:lipschitz}. We will leverage this property using the following backwards version of Gronwall's lemma. The proof is standard and omitted.

\begin{Lem} \label{lemma:gronwall}
	Let $u$ and $R$ be non-negative, c\`adl\`ag functions and let $\pi$ be a locally finite and positive measure. If
	\[ u(s) \leq R(s) + \int_{(s,t]} u(x)\pi(dx) \]
	holds for all $0 \leq s \leq t$, then for all $0 \leq s \leq t$ we have
	\[ u(s) \leq R(s) + e^{\pi(s,t]} \int_{(s,t]} R(x)\pi(dx). \]
\end{Lem}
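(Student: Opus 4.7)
The plan is to follow the classical iteration (Picard) argument, but carried out backwards in the interval $(s,t]$ instead of forwards. Fix $t \geq 0$ and write $\pi_s = \pi(s,t]$ in what follows; note that since $u$ is c\`adl\`ag it is bounded on $[0,t]$, so $M := \sup_{0 \leq y \leq t} u(y) < +\infty$, and $\pi_s < +\infty$ because $\pi$ is locally finite.

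First, I would iterate the hypothesis: substituting the bound for $u(x_1)$ back into the integral yields
\[ u(s) \leq R(s) + \int_{(s,t]} R(x_1)\pi(dx_1) + \int_{(s,t]}\int_{(x_1,t]} u(x_2)\pi(dx_2)\pi(dx_1), \]
and iterating $n$ times produces
\[ u(s) \leq \sum_{k=0}^{n-1} I_k(s) + J_n(s), \]
where $I_0(s) = R(s)$, and for $k \geq 1$,
\[ I_k(s) = \int_{\Delta_k(s,t)} R(x_k)\,\pi(dx_1)\cdots\pi(dx_k), \quad J_n(s) = \int_{\Delta_n(s,t)} u(x_n)\,\pi(dx_1)\cdots\pi(dx_n), \]
with $\Delta_k(s,t) = \{(x_1,\ldots,x_k) : s < x_1 < x_2 < \cdots < x_k \leq t\}$.

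Next, I would control the simplex volume by symmetrization. Since $\Delta_k(s,t)$ is one of $k!$ disjoint (up to $\pi^{\otimes k}$-null sets arising from atoms) permuted copies that together cover $(s,t]^k$, Fubini gives
\[ \int_{\Delta_k(s,t)} \pi(dx_1)\cdots\pi(dx_k) \leq \frac{\pi_s^{\,k}}{k!}. \]
Applied to $J_n$, together with the uniform bound $u \leq M$ on $[0,t]$, this yields $J_n(s) \leq M\,\pi_s^{\,n}/n! \to 0$ as $n \to +\infty$. Applied to $I_k$ for $k \geq 1$, using Fubini to integrate out $x_1,\ldots,x_{k-1}$ first and bounding $\pi(s, x_k)^{k-1} \leq \pi_s^{\,k-1}$, I obtain
\[ I_k(s) \leq \frac{\pi_s^{\,k-1}}{(k-1)!} \int_{(s,t]} R(x)\,\pi(dx). \]

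Finally, letting $n \to +\infty$ in the iterated inequality and summing the geometric series gives
\[ u(s) \leq R(s) + \left(\sum_{k=1}^{\infty} \frac{\pi_s^{\,k-1}}{(k-1)!}\right) \int_{(s,t]} R(x)\,\pi(dx) = R(s) + e^{\pi(s,t]} \int_{(s,t]} R(x)\,\pi(dx), \]
which is the claimed bound. The only delicate point is the symmetrization step in the presence of atoms of $\pi$; but since we only need an upper bound, writing $(s,t]^k = \bigcup_\sigma \Delta_k^\sigma(s,t) \cup N$ with $N$ contained in the diagonals and estimating the measure of each of the $k!$ simplices identically by permutation-invariance of $\pi^{\otimes k}$ is enough, so atoms pose no real difficulty. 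This is the one step that needs the most care; everything else is routine Fubini and dominated convergence.
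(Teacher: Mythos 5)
Your iterated Picard argument is correct, and since the paper explicitly omits the proof as ``standard,'' there is no internal proof to compare against; what you wrote is precisely the canonical backward Gronwall argument for a Stieltjes measure. Two small points you handled well and that do deserve the care you gave them: (i) the symmetrization bound $\pi^{\otimes k}(\Delta_k(s,t)) \le \pi(s,t]^k/k!$ is only an inequality when $\pi$ has atoms (the $k!$ permuted open simplices are disjoint but do not cover $(s,t]^k$), and since only an upper bound is needed this is harmless; and (ii) the remainder $J_n$ is controlled by the finite sup of the c\`adl\`ag function $u$ on $[0,t]$ together with local finiteness of $\pi$, so the series converges and the tail vanishes. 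No gaps.
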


The property~\ref{property:u} of Theorem~\ref{thm:main} follows readily from Lemma~\ref{lemma:proof-thm-1} below. The proof of this lemma uses the following result, whose long proof is postponed to Appendix~\ref{appendix:proof-lemma}. Note that by considering a sequence $\ell_n \to \lambda$, Lemma~\ref{lemma:proof-thm-1} establishes more than what is necessary for property~\ref{property:u}. However, this generalization will be needed for the proof of property~\ref{property:fdd-convergence} in the next subsection.

\begin{Lem} \label{reste}
Fix $t, \lambda > 0$ and consider any sequence $(\ell_n)$ with $\ell_n \to \lambda$. For $n \geq 1$, let $R_n$ be the function
\[ R_n(s) = \left\vert \Psi_{n}(u_n(\, \cdot \,,t,\ell_n)) ((s,t]) - \Psi(u_n(\,\cdot\,,t,\ell_n))((s,t]) \right\vert, \ 0 \leq s \leq t. \]

Then $R_n(s) \to 0$ for any $\bottleneck(t) < s \leq t$ and $\sup \left\{ R_n(s): 0 \leq s \leq t, n \geq 1 \right\}$ is finite.
\end{Lem}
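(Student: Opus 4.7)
Using \eqref{eq:def-epsilon} and \eqref{eq:alternative-def}, I would expand $\psi_{i-1,n}(\lambda) = (1+\epsilon_{i-1,n}(\lambda))[\lambda\alpha_{i-1,n} - \lambda^2\beta_{i-1,n} + \int h(x,\lambda)\nu_{i-1,n}(dx)]$. Abbreviating $u_n = u_n(\cdot,t,\ell_n)$ and exploiting that $u_n$ is constant on $[t_i^n, t_{i+1}^n)$ while $\alpha_n, \beta_n, \nu_n$ carry atoms $\alpha_{i-1,n}, \beta_{i-1,n}, \nu_{i-1,n}(\cdot)$ at $t_i^n$, summing over $i$ with $t_i^n \in (s,t]$ yields
\[
\Psi_n(u_n)((s,t]) - \Psi(u_n)((s,t]) = E_n(s) + D_n^\alpha(s) - D_n^\beta(s) + D_n^\nu(s),
\]
where $E_n(s)$ gathers the $\epsilon_{i-1,n}$-corrections and $D_n^\alpha(s) = \int_{(s,t]} u_n\, d(\alpha_n-\alpha)$, $D_n^\beta(s) = \int_{(s,t]} u_n^2\, d(\beta_n-\beta)$, $D_n^\nu(s) = \int_{(0,\infty)\times(s,t]} h(x,u_n(y))\, d(\nu_n-\nu)$. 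Setting $\lambda_* \geq \sup_n \ell_n$ and $C = \overline c_{t,\lambda_*}^u$, finite by Lemma \ref{bounded}, Lemma \ref{lemma:inequality-c1} gives $|\int(1-e^{-u_n(t_i^n)x})\nu_{i-1,n}(dx)| \leq c_1(C)\mu_n(t_{i-1}^n,t_i^n]$ and $|\epsilon_{i-1,n}(u_n(t_i^n))| \leq \overline c_{n,t}^\epsilon(C)$, so $|E_n(s)| \leq \overline c_{n,t}^\epsilon(C)\, c_1(C)\, \mu_n(t)$, which vanishes uniformly in $s$ by Lemma \ref{aspsin} together with $\mu_n(t) \to \mu(t)$ from \refAone. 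Analogous estimates on $|D_n^\alpha|, |D_n^\beta|, |D_n^\nu|$ (expanding $|h(x,\lambda)| = O(x^2/(1+x^2))$ for $\lambda \leq C$ and using $\int x^2/(1+x^2) \nu_n \leq 2\beta_n(t)$) give $\sup_{n,s} R_n(s) < \infty$.

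\textbf{Convergence for $s > \bottleneck(t)$ via Helly extraction.} Argue by contradiction: if a subsequence $(n_k)$ satisfies $|D_{n_k}^\alpha(s) - D_{n_k}^\beta(s) + D_{n_k}^\nu(s)| \geq \delta > 0$, note that Lemma \ref{lemma:inequality-delta} combined with $\mu_n(t) \to \mu(t)$ implies $\tv{u_n}([0,t]) \leq \Delta_{t,\lambda_*}^u\, \mu_n(t)$ is uniformly bounded, while Lemma \ref{bounded} gives a uniform sup bound. Helly's selection theorem then produces a further subsequence $u_{n_{k_l}} \to u^*$ pointwise on $[0,t]$, with $u^*$ bounded and of bounded variation. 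Because $s > \bottleneck(t)$, Lemma \ref{lemma:inf} yields $u^* \geq \underline c_{s,t,\lambda}^u > 0$ on $[s,t]$. One then passes to the limit along this sub-subsequence in each of the three terms: for $D_{n_{k_l}}^\alpha$, decompose via Jordan parts and use that \refAone\ (via $\tv{\alpha_n}(y) \to \tv{\alpha}(y)$ and $\alpha_n(y) \to \alpha(y)$ at every $y$) gives setwise convergence $\alpha_{n_{k_l}}^\pm \to \alpha^\pm$ as finite positive measures; combining this with the pointwise convergence $u_{n_{k_l}} \to u^*$ shows $\int u_{n_{k_l}} d\alpha_{n_{k_l}} \to \int u^* d\alpha$ and $\int u_{n_{k_l}} d\alpha \to \int u^* d\alpha$, so $D_{n_{k_l}}^\alpha(s) \to 0$. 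The same scheme handles $D_{n_{k_l}}^\beta$. For $D_{n_{k_l}}^\nu$, first truncate at a continuity level $d>0$ of $\nu(\cdot \times (0,t])$ using $|h(x,\lambda)| \leq \kappa_C\, x^2/(1+x^2)$ together with property \ref{property:triplet} to make the small-$x$ contribution arbitrarily small uniformly in $n$, and on $\{x > d\}$ invoke the tail convergence in \refAone\ together with $u_{n_{k_l}} \to u^*$. This contradicts $|D_{n_k}^\alpha(s) - D_{n_k}^\beta(s) + D_{n_k}^\nu(s)| \geq \delta$.

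\textbf{Main obstacle.} The critical step is the passage to the limit in $\int u_{n_{k_l}} d\alpha_{n_{k_l}} \to \int u^* d\alpha$ (and its analogues for $\beta, \nu$), where both integrand and integrating measure vary with $n$. Since pointwise convergence of monotone distribution functions does not in general upgrade to uniform convergence, one cannot simply interchange limit and integral. The resolution requires a careful diagonal argument that combines $u_{n_{k_l}} \to u^*$ pointwise (and boundedness) with setwise convergence of $\alpha_{n_{k_l}}^\pm \to \alpha^\pm$ at every $y$, exploiting the continuity of $u^*$ and of $\alpha^\pm$ away from countable sets. The hypothesis $s > \bottleneck(t)$ enters crucially through Lemma \ref{lemma:inf}, supplying the positive lower bound $\underline c_{s,t,\lambda}^u$ that makes the Lipschitz constants $c_2(\underline c_{s,t,\lambda}^u, C), c_3(\underline c_{s,t,\lambda}^u, C)$ from \eqref{eq:c2+c3} finite and thereby controls $h(x,u_n(y))$ in the $\nu$-integral.
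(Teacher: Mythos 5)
Your decomposition into the $\epsilon$-error term $E_n$ and the three difference terms $D_n^\alpha, D_n^\beta, D_n^\nu$ matches the paper's decomposition into $B_n^\epsilon, B_n^\alpha, B_n^\beta, B_n^\nu$, and your bound on $E_n$ via Lemmas~\ref{lemma:inequality-c1} and~\ref{aspsin} is exactly the paper's argument, as is your proof of the uniform boundedness of $R_n$. You also correctly identify the role of $s > \bottleneck(t)$: it supplies, via Lemma~\ref{lemma:inf}, the positive lower bound $\underline c^u_{s,t,\lambda}$ that makes $c_2(\underline c^u_{s,t,\lambda},\cdot)$ finite, which is needed to control $h(x,u_n(y))$.

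However, the convergence argument via Helly extraction and contradiction, while genuinely different from the paper's direct approach, contains a gap that you yourself flag as the ``main obstacle'' and then resolve incorrectly. You assert that \refAone\ --- i.e.\ $\alpha_n(y)\to\alpha(y)$ and $\tv{\alpha_n}(y)\to\tv{\alpha}(y)$ at every $y$ --- gives \emph{setwise} convergence $\alpha_n^\pm \to \alpha^\pm$ of the Jordan parts. This is false: take $\alpha_n = \frac{1}{n}\sum_{k=1}^n \delta_{k/n}$ and $\alpha$ Lebesgue measure on $[0,1]$; then $\alpha_n(y)\to\alpha(y)$ and $\tv{\alpha_n}(y)\to\tv{\alpha}(y)$ at every $y$, yet $\alpha_n(\mathbb{Q}\cap[0,1])=1\neq 0=\alpha(\mathbb{Q}\cap[0,1])$. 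Pointwise convergence of distribution functions, even with convergence of total variations, gives only weak convergence of measures, which cannot be combined with merely pointwise convergence $u_{n_{k_l}}\to u^*$ of the integrands to conclude $\int u_{n_{k_l}}\,d\alpha_{n_{k_l}}\to\int u^*\,d\alpha$: the jumps of $u_n$ occur precisely at the atoms of $\alpha_n$, and weak convergence of measures does not track how these atoms align. Moreover, your argument never uses Assumption~\refAtwo, which is indispensable for handling fixed times of discontinuity.

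The paper instead works directly, without Helly extraction. It constructs (Lemma~\ref{decompmesure}) a finite partition of $(s,t]$ into intervals $(a_j,b_j]$ of small $\mu$-mass plus intervals $(a'_k,b'_k]$ whose right endpoints $b'_k$ carry the large atoms of $\mu$. On each $(a_j,b_j]$ it replaces $u_n(y)$ by $u_n(b_j)$ using the Lipschitz estimate~\eqref{eq:variation}, paying an error of order $(\mu_n(a_j,b_j])^2$, and then uses \refAone\ for the (fixed) quantity $\alpha_n(a_j,b_j]\to\alpha(a_j,b_j]$. The atom contributions at $\{b'_k\}$ are handled by \refAtwo\ via $\alpha_{\gamma_n(b'_k),n}\to\Delta\alpha(b'_k)$. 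This avoids any appeal to setwise convergence and exploits precisely the coordination between the jumps of $u_n$ and those of the driving measures that your Helly argument loses.
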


\begin{Lem} \label{lemma:proof-thm-1}
	Fix $t, \lambda > 0$ and a sequence $(\ell_n)$ with $\ell_n \to \lambda$. Then for any $s \in [\bottleneck(t), t]$, the sequence $(u_n(s,t,\ell_n), n \geq 1)$ converges and the function
	\[ u: s \in [\bottleneck(t), t] \mapsto \lim_{n \to +\infty} u_n(s,t,\ell_n) \]
	is the unique function satisfying the following properties:
	\begin{enumerate}
		\item $u(s) = \lambda + \Psi(u)((s,t])$ for all $\bottleneck(t) \leq s \leq t$;
		\item $u$ is c\`adl\`ag;
		\item $\inf_{[s, t]} u > 0$ for any $\bottleneck(t) < s \leq t$.
	\end{enumerate}
	
	In particular, $u(s,t,\lambda)$ does not depend on the sequence $\ell_n$.
\end{Lem}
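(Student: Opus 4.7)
The plan is to combine the discrete identity of Lemma~\ref{relun} with the remainder estimate of Lemma~\ref{reste}. Rewriting
\[ u_n(s,t,\ell_n) = \ell_n + \Psi(u_n(\,\cdot\,,t,\ell_n))((s,t]) + R_n(s), \]
Lemma~\ref{reste} says precisely that the error term $R_n(s)$ vanishes for every $s \in (\bottleneck(t),t]$ and is uniformly bounded. The strategy is then to (i) extract a subsequential limit $u^*$ of the family $(u_n(\,\cdot\,,t,\ell_n))$, (ii) pass to the limit in the identity above to obtain $u^*(s) = \lambda + \Psi(u^*)((s,t])$, (iii) prove uniqueness of solutions of this continuous fixed-point equation via Gronwall, and (iv) conclude convergence of the full sequence.

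For compactness, Assumption~\ref{assumptions} ensures that $(\tv{\alpha_n}(t))$ and $(\beta_n(t))$ are bounded, so Lemma~\ref{bounded} applied with the uniform ceiling $C = 2\lambda$ (valid for $n$ large since $\ell_n \to \lambda$) and Lemma~\ref{aspsin} together bound $\overline c^u_{t,\ell_n}$ and $\Delta^u_{t,\ell_n}$ uniformly in $n$. Lemma~\ref{lemma:inequality-delta} combined with $\mu_n(t) \to \mu(t)$ then gives a uniform modulus of continuity for $(u_n(\,\cdot\,,t,\ell_n))$ against $\mu$. A Helly-type diagonal extraction yields a subsequence $n_k$ along which $u_{n_k}(\,\cdot\,,t,\ell_{n_k})$ converges to a càdlàg limit $u^*$ at every point outside the (countable) set of atoms of $\mu$, and then, using the modulus, everywhere on $[0,t]$.

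Passing to the limit in $\Psi(u_n(\,\cdot\,,t,\ell_n))((s,t])$ for $s > \bottleneck(t)$ is obtained by dominated convergence in each of the three terms defining $\Psi$. Since $u_n$ is increasing in its last argument and $\ell_n \geq \lambda/2$ eventually, Lemma~\ref{lemma:inf} applied at $\lambda/2$ yields a uniform positive lower bound $u_n(y,t,\ell_n) \geq \underline c^u_{s,t,\lambda/2} > 0$ on $[s,t]$; this forces $\inf_{[s,t]} u^* > 0$, and the $\nu$-integrand is controlled via $0 \leq h(x,\cdot) \leq C\,x^2/(1+x^2)$ on bounded ranges of its second argument. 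Combined with $R_n(s) \to 0$ this shows $u^*(s) = \lambda + \Psi(u^*)((s,t])$ for every $s \in (\bottleneck(t),t]$, and right-continuity of both sides extends the identity to $s = \bottleneck(t)$.

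Uniqueness is then handled by applying the Lipschitz estimate~\eqref{eq:lipschitz} to two solutions $u_1,u_2$: for any $s > \bottleneck(t)$ the positivity property makes the Lipschitz constant $c_3$ finite on $[s,t]$, giving $|u_1 - u_2|(s) \leq c_3 \int_{(s,t]} |u_1 - u_2|\, d\widetilde\mu$, so Lemma~\ref{lemma:gronwall} forces $u_1 \equiv u_2$ on $(\bottleneck(t),t]$, and right-continuity extends equality to $\bottleneck(t)$. Since every subsequential limit of $(u_n(\,\cdot\,,t,\ell_n))$ therefore coincides with the unique $u$, the whole sequence converges. The main obstacle lies in the boundary behavior at $\bottleneck(t)$: both the positive lower bound on the $u_n$'s and the Lipschitz constant $c_3$ degenerate there, so convergence and uniqueness must first be established on every strict subinterval $[s,t]$ with $s > \bottleneck(t)$, and only then extended to $\bottleneck(t)$ itself via the càdlàg structure.
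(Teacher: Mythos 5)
Your proposal is correct, but it follows a genuinely different route from the paper. The paper proves directly that $(u_n(s,t,\ell_n))_{n}$ is a Cauchy sequence: writing the difference and applying the Lipschitz bound~\eqref{eq:lipschitz} followed by the backwards Gronwall lemma~\ref{lemma:gronwall}, it obtains
\[
|u_n(s)-u_m(s)| \leq R_n(s)+R_m(s)+ C e^{C\widetilde\mu(s,t]}\int_{(s,t]}(R_n+R_m)\,d\widetilde\mu,
\]
and then invokes Lemma~\ref{reste} plus dominated convergence. The advantage of that approach is that it never needs to extract a subsequence or identify a candidate limit in advance; the Gronwall inequality does both jobs at once. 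You instead go through compactness (Helly on the uniformly bounded-variation family, using~\eqref{eq:variation} together with $\mu_n\to\mu$), identify every subsequential limit as a solution of the fixed-point equation $u^*=\lambda+\Psi(u^*)((\cdot,t])$ by passing $n\to\infty$ term by term in $\Psi(u_n)$ via dominated convergence, and then use the Gronwall-based uniqueness to force the whole sequence to converge. That is a standard and perfectly valid pattern, and each ingredient you invoke is available: the uniform upper bound $\overline c^u_{t,L}$, the uniform lower bound $\underline c^u_{s,t,\lambda/2}$ on $[s,t]$ for $s>\bottleneck(t)$ (via Lemma~\ref{lemma:inf} and monotonicity in the last argument), the domination $0\le h(x,y)\le c_5(T)\,x^2/(1+x^2)$ for $y\le T$ combined with $\int(1\wedge x^2)\,\nu<\infty$, and the extension to $s=\bottleneck(t)$ by the modulus $\Delta^u_{t,L}\mu_n(\cdot,\cdot]$ exactly as in the paper's second step. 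Two small points of care: your displayed identity uses $R_n(s)$ where the actual signed remainder is $\Psi_n(u_n)((s,t])-\Psi(u_n)((s,t])$ whose modulus is $R_n(s)$; and the Helly extraction only gives convergence away from the atoms of $\mu$ a priori, so the extension-to-atoms step you mention is essential, not cosmetic. Compared with the paper, your route buys a more modular argument (compactness separated from identification and uniqueness) at the cost of one extra extraction; the paper's direct Cauchy computation is shorter and avoids having to first argue that the family of functions $u_n(\cdot,t,\ell_n)$ is relatively compact.
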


\begin{proof}
	In the rest of the proof fix $t, \lambda > 0$ and $(\ell_n)$ a sequence converging to $\lambda$. Let $\ell = \inf_{n \geq 1} \ell_n$ and $L = \sup_{n \geq 1} \ell_n$ and assume without loss of generality, since $\ell_n \to \lambda > 0$, that $\ell > 0$. To ease the notation, we write in the rest of the proof $\bottleneck = \bottleneck(t)$ and $u_n(s) = u_n(s,t,\ell_n)$ for $0 \leq s \leq t$. We decompose the proof in four steps: first we prove that the sequence $(u_n(s), n \geq 1)$ is Cauchy for any $s \in (\bottleneck, t]$, then that it is Cauchy for $s = \bottleneck$, then that $u$ satisfies the claimed properties and finally that it is the only such function.
	
	Before beginning, note that everything is trivial if $\bottleneck = t$, because then $u_n(s) = \ell_n$ and $\Psi(u)((s,t]) = 0$ for any $s \in [\bottleneck, t]$. Hence in the sequel we assume that $\bottleneck < t$.
	\\
	
	\noindent \textit{First step: $(u_n(s))$ is Cauchy for $s \in (\bottleneck, t]$.} In the rest of this step fix $s \in (\bottleneck, t]$ and for $s \leq y \leq t$ we define as in Lemma~\ref{reste} $R_n(y) = \vert \Psi_n(u_n)((y,t]) - \Psi(u_n)((y,t]) \vert$. Then the second equality in~\eqref{eq:dynamics-u-n} gives for any $s \leq y \leq t$ and any $m,n \geq 1$
	\[ \left\lvert u_n(y) - u_m(y) \right\rvert \leq R_n(y) + R_m(y) + \left\lvert \Psi(u_n)((y,t]) - \Psi(u_m)((y,t]) \right\rvert. \]
	
	We get from~\eqref{eq:lipschitz}
	\[ \left \lvert \Psi(u_n)((y,t]) - \Psi(u_m)((y,t]) \right \rvert \leq c_3 \left( \inf_{(y,t]} u_n \wedge \inf_{(y,t]} u_m, \, \sup_{(y,t]} u_n + \sup_{(y,t]} u_m \right) \int_{(y,t]} \left \lvert u_n - u_m \right \rvert d \widetilde \mu. \]

	Since the function $u_n(s,t,\lambda)$ is increasing in $\lambda$, we have for any $y \in [s,t]$ and $n \geq N_{s,t,\ell}$ (recall that $N_{s,t,\ell}$ is defined in~\eqref{eq:def-underline-c^u} and is finite by Lemma~\ref{lemma:inf})
	\[ u_n(y) = u_n(y,t,\ell_n) \geq u_n(y,t,\ell) \geq \inf_{s \leq y' \leq t} u_n(y',t,\ell) \geq \underline c_{s,t,\ell}^u > 0. \]
	
	Similar monotonicity arguments lead to $u_n(y) \leq \overline c_{t, L}^u$ for any $y \leq t$ and $n \geq 1$, so that monotonicity properties of $c_3(\eta, T)$ in $\eta$ and $T$ give for $n,m \geq N_{s,t,\lambda}$
	\[ \left \lvert \Psi(u_n)((y,t]) - \Psi(u_m)((y,t]) \right \rvert \leq c_3 \left( \underline c_{s,t,\ell}^u, 2 \overline c_{t,L}^u \right) \int_{(y,t]} \left \lvert u_n - u_m \right \rvert d \widetilde \mu. \]
	
	We finally get the bound
	\[ \left \lvert u_n(y) - u_m(y) \right \rvert \leq R_n(y) + R_m(y) + C \int_{(y,t]} \left \lvert u_n - u_m \right \rvert d \widetilde \mu \]
	with $C = C_{s,t,\ell,L} = c_3(\underline c_{s,t,\ell}^u, 2\overline c_{t,L}^u)$, which holds for all $s \leq y \leq t$ and all $n, m \geq N_{s,t,\ell}$. Thus Lemma~\ref{lemma:gronwall} implies for those $n,m$
	\[ \left \lvert u_n(s) - u_m(s) \right \rvert \leq R_n(s) + R_m(s) + C e^{C \widetilde \mu(s,t]} \int_{(s,t]} \left( R_n + R_m \right) d\widetilde \mu \]
	so that for any $n_0 \geq N_{s,t,\ell}$,
	\[ \sup_{n,m \geq n_0} \left \lvert u_n(s) - u_m(s) \right \rvert \leq 2 \sup_{n \geq n_0} \left( R_n(s) \right) + 2 C e^{C \widetilde \mu(s,t]} \sup_{n \geq n_0} \left( \int_{(s,t]} R_n d\widetilde \mu \right). \]
	
	Lemma~\ref{reste} combined with the dominated convergence theorem shows that the right hand side of the above inequality goes to $0$ as $n_0 \to +\infty$ which proves that the sequence $(u_n(s), n \geq 1)$ is Cauchy and completes the proof of this first step.
	\\
	
	\noindent \textit{Second step: $(u_n(\bottleneck))$ is Cauchy.} For any $\bottleneck < s' \leq t$,~\eqref{eq:variation} entails
	\begin{align*}
		\left\lvert u_n(\bottleneck) - u_m(\bottleneck) \right \rvert & \leq \left\lvert u_n(\bottleneck) - u_n(s') \right \rvert + \left\lvert u_m(\bottleneck) - u_m(s') \right \rvert + \left\lvert u_n(s') - u_m(s') \right \rvert\\
		& \leq 2 \Delta_{t,\ell_n}^u \mu_n(\bottleneck, s'] + \left\lvert u_n(s') - u_m(s') \right \rvert\\
		& \leq 2 \Delta_{t,L}^u \mu_n(\bottleneck, s'] + \left\lvert u_n(s') - u_m(s') \right \rvert
	\end{align*}
	using for the last inequality that $\ell_n \leq L$ and that $\Delta_{t,y}^u$ is increasing in $y$, as can be seen directly from its definition~\eqref{eq:def-Delta}. Hence for any $n_0 \geq 1$,
	\[ \sup_{m,n \geq n_0} \left \lvert u_n(\bottleneck) - u_m(\bottleneck) \right \rvert \leq 2 \Delta_{t,L}^u \sup_{n \geq n_0} \mu_n(\bottleneck, s'] + \sup_{m,n \geq n_0} \left \lvert u_n(s') - u_m(s') \right \rvert. \]

	By~\eqref{eq:conv-mu} and the fact that $(u_n(s'))$ is Cauchy by the first step since $\bottleneck < s' \leq t$, the right hand side of the above inequality goes to $2 \Delta_{t,L}^u \mu(\bottleneck, s']$ as $n_0$ goes to infinity. Since $\mu(\bottleneck, s'] \to 0$ as $s' \downarrow \bottleneck$, letting then $s' \downarrow \bottleneck$ shows that $(u_n(\bottleneck))$ is Cauchy.
	\\
	
	\noindent \textit{Third step: properties of $u$.} Let from now on $u$ denote the function of the statement and consider $s \in [\bottleneck, t]$. First note that the second property follows from the first one, so we only have to prove the first and third ones. Assume first that $s > \bottleneck$. We have seen in the first step that for any $s \leq y \leq t$ and $n \geq N_{s,t,\ell}$
	\[ 0 < \underline c_{s,t,\ell}^u \leq u_n(y) \leq \overline c_{t,L}^u < +\infty. \]
	
	Since $u_n(y) \to u(y)$ for $s \leq y \leq t$ by definition of $u$, $u$ also satisfies $\underline c_{s,t,\lambda}^u \leq u(y) \leq \overline c_{t,L}^u$ for $s \leq y \leq t$. In particular the third property $\inf_{[s,t]} u > 0$ is satisfied. Let us now show the first property, still in the case $s > \bottleneck$. Plugging in~\eqref{eq:dynamics-u-n}, we get
	\begin{multline*}
		\left\lvert u(s) - \lambda - \Psi(u)((s,t]) \right \rvert \leq \lvert u(s) - u_n(s)\lvert + \left\lvert \Psi_n(u_n)((s,t]) - \Psi(u_n)((s,t]) \right \rvert\\
		+ \left\lvert \Psi(u_n)((s,t]) - \Psi(u)((s,t]) \right \rvert.
	\end{multline*}
	
	Since both $u_n$ and $u$ are bounded uniformly on $[s,t]$ by $\underline c_{s,t,\ell}^u$ and $\overline c_{t,L}^u$, we get with similar arguments as in the first step
	\[ \left \lvert \Psi(u_n)((s,t]) - \Psi(u)((s,t]) \right \rvert \leq c_3 \left( \underline c_{s,t,\ell}^u, 2 \overline c_{t,L}^u \right) \int_{(s,t]} \lvert u_n - u \lvert d \widetilde \mu \]
	and finally, we have for $n \geq N_{s,t,\ell}$
	\begin{multline*}
		\left\lvert u(s) - \lambda - \Psi(u)((s,t]) \right \rvert \leq \lvert u(s) - u_n(s) \lvert + \left\lvert \Psi_n(u_n)((s,t]) - \Psi(u_n)((s,t]) \right \rvert\\
		+ c_3 \left( \underline c_{s,t,\ell}^u, 2 \overline c_{t, L}^u \right) \int_{(s,t]} \lvert u_n - u \lvert d \widetilde \mu.
	\end{multline*}
	
	Let now $n$ go to infinity. The first term of the above upper bound goes to $0$ by definition of $u(s)$; the second term goes to $0$ by Lemma~\ref{reste}. Finally, the last term also goes to $0$ using the dominated convergence theorem. Thus $u$ satisfies the first property for $s > \bottleneck$.
	
	To extend this for $s = \bottleneck$, we proceed as in the second step and consider any $\bottleneck < s' \leq t$: then $\lvert u_n(\bottleneck) - u_n(s') \lvert \leq \Delta_{t,L}^u \mu_n(\bottleneck, s']$ and taking the limit $n \to +\infty$ gives $\lvert u(\bottleneck) - u(s') \lvert \leq \Delta_{t,L}^u \mu(\bottleneck, s']$.
	Letting $s' \downarrow \bottleneck$ shows that $u(s') \to u(\bottleneck)$. On the other hand, it is plain that $\lambda + \Psi(u)((s',t]) \to \lambda + \Psi(u)((\bottleneck, t])$ as $s' \downarrow \bottleneck$ and so $u$ satisfies the first property for all $s \in [\bottleneck, t]$. It remains to show uniqueness in order to complete the proof.
	\\
	
	\noindent \textit{Fourth step: uniqueness.} Let $\widetilde u$ be a function with the same properties than $u$. Then~\eqref{eq:lipschitz} gives
	\[ \lvert u(s) - \widetilde u(s) \lvert = \left\lvert \Psi(u)((s,t]) - \Psi(\widetilde u)((s,t]) \right \rvert \leq c_3 \left( \underline c_{s,t,\ell} \wedge \inf_{[s,t]} \widetilde u , \, \overline c_{t,L}^u + \sup_{[s,t]} \widetilde u \right) \int_{(s,t]} \lvert u - \widetilde u \lvert d \widetilde \mu \]
	and we conclude that $u = \widetilde u$ using Lemma~\ref{lemma:gronwall}.
\end{proof}

\subsection{Proof of property~\ref{property:fdd-convergence}} \label{sub:property-weak-convergence}

Fix in the rest of the proof $t \geq 0$, $s \in [\bottleneck(t), t]$, $x_n \to x \in [0,\infty)$, $I \geq 1$, $\lambda_1, \ldots, \lambda_I > 0$ and $s \leq t_1 < \cdots < t_I \leq t$. Consider first the case $I = 2$, so that we must show that
\begin{equation} \label{eq:conv-fdd}
	\lim_{n \to +\infty} \E \left( e^{-\lambda_1 X_n(t_1)-\lambda_2 X_n(t_2)} \mid X_n(s) = x_n \right) = \exp \left( - x u(s, t_1, \lambda_1 + u(t_1, t_2, \lambda_2)) \right).
\end{equation}

Using the Markov property of $X_n$ and the definition~\eqref{eq:def-u} of~$u_n$, we get
\begin{align*}
	\E \left( e^{-\lambda_1 X_n(t_1)-\lambda_2 X_n(t_2)} \mid X_n(s) = x_n \right) & = \E \left[ e^{-\lambda_1 X_n(t_1)} \E \left( e^{-\lambda_2 X_n(t_2)} \mid X_n(t_1) \right) \mid X_n(s) = x_n \right]\\
	 & = \E \left[ e^{-\lambda_1 X_n(t_1)} e^{-X_n(t_1) u_n(t_1, t_2, \lambda_2)} \mid X_n(s) = x_n \right]
\end{align*}
and so
\begin{equation} \label{eq:fd}
	\E \left( e^{-\lambda_1 X_n(t_1)-\lambda_2 X_n(t_2)} \mid X_n(s) = x_n \right) = \exp \left( - x_n u_n(s,t_1, \lambda_1 + u_n(t_1, t_2, \lambda_2)) \right).
\end{equation}

Since $\bottleneck$ is an increasing function by Lemma~\ref{lemma:inf} and $\bottleneck(t) \leq s \leq t_1 \leq t_2 \leq t$, it holds that $\bottleneck(t_2) \leq t_1 \leq t_2$ and so Lemma~\ref{lemma:proof-thm-1} implies that $u_n(t_1, t_2, \lambda_2) \to u(t_1, t_2, \lambda_2)$. Also, $\bottleneck(t_1) \leq s \leq t_1$ so Lemma~\ref{lemma:proof-thm-1} implies that
\[ \lim_{n \to +\infty} u_n(s, t_1, \lambda_1 + u_n(t_1, t_2, \lambda_2)) = u(s, t_1, \lambda_1 + u(t_1, t_2, \lambda_2)) \]
which proves~\eqref{eq:conv-fdd} using~\eqref{eq:fd}. The general case $I \geq 3$ follows in a similar way by induction.

\subsection{Proof of property~\ref{property:tightness}} \label{sub:property-tightness}

Fix $t \geq s \geq 0$, $x \geq 0$ and $x_n \to x$: the goal is to show that the sequence $(X_n(y), s \leq y \leq t)$ under $\P(\, \cdot \mid X_n(s) = x_n)$ is tight, and in order to do so we use Theorem~$1'$ in Bansaye and Simatos~\cite{Bansaye:1}. There are two assumptions, A1 and A2', to check.

Assumption A1 is a compact containment condition, and since $[0,\infty]$ endowed with the metric $d(x,y) = \lvert e^{-x} - e^{-y}\rvert$ is compact (in addition to being complete and separable), it is automatically checked. Thus we only have to check A2', i.e., we have to show that for each $n \geq 1$, each $s \leq y_0 \leq y \leq t$ with $\mu_n(y_0, y] \leq \Delta^u_{t,2} / 2$ and each $x_0 \in [0,\infty]$,
\begin{equation} \label{eq:goal-tightness}
	\E \left[ d(x_0, X_n(y))^2 \mid X_n(y_0) = x_0 \right] \leq 2 \Delta^u_{t,2} \mu_n(y_0, y].
\end{equation}

Indeed, in this case assumption~A2' is satisfied with $\eta_0 = (\Delta^u_{t,2})^2$, $F^n = 2 \Delta^u_{t,2} \mu_n$ and $F = 2 \Delta^u_{t,2} \mu$. So let us now prove~\eqref{eq:goal-tightness}. Since $\infty$ is, by convention, an absorbing state, we only need to prove this inequality for finite $x_0$. Starting from the left-hand side of~\eqref{eq:goal-tightness}, we get by expanding the square
\begin{align*}
	\E \left[ d(x_0, X_n(y))^2 \mid X_n(y_0) = x_0 \right] & = e^{-2x_0} + e^{-x_0 u_n(y_0, y, 2)} - 2 e^{- x_0 - x_0 u_n(y_0, y,1)}\\
	& = e^{-2x_0} \left[ 2 \left(1 - e^{x_0(1-u_n(y_0, y,1))} \right) - \left( 1 - e^{x_0(2-u_n(y_0, y,2))} \right) \right].
\end{align*}

Since $\lvert 1-e^z\rvert \leq e^{\lvert z \rvert}-1 \leq 2 (e^{\lvert z \rvert} - 1)$ for any $z \in \R$, we obtain further
\begin{align*}
	\E \left[ d(x_0, X_n(y))^2 \mid X_n(y_0) = x_0 \right] & \leq 2 e^{-2x_0} \left[ e^{x_0 \lvert 1 - u_n(y_0, y,1)\rvert} + e^{x_0 \lvert 2 - u_n(y_0, y,2)\rvert} - 2 \right].
\end{align*}

Writing $\lambda$ as $\lambda = u_n(y,y,\lambda)$,~\eqref{eq:variation} gives
\begin{align*}
	\E \left[ d(x_0, X_n(y))^2 \mid X_n(y_0) = x_0 \right] & \leq 2 e^{-2x_0} \left( e^{x_0 \Delta_{y,1}^u \mu_n(y_0, y]} + e^{x_0 \Delta_{y,2}^u \mu_n(y_0, y]} - 2 \right)\\
	& \leq 4 e^{-2x_0} \left( e^{x_0 \Delta_{t,2}^u \mu_n(y_0,y]} - 1 \right)
\end{align*}
using for the last inequality that $y \leq t$ and that both maps $z \mapsto \Delta^u_{z, \lambda}$ and $\lambda \mapsto \Delta^u_{z, \lambda}$ are increasing. Further, elementary analysis shows that for any $0 \leq y' \leq 1$
\[ \sup_{x' \geq 0} \left( e^{-x'} (e^{x' y'} - 1) \right) = y' \left( 1-y' \right)^{1/y'-1} \leq \frac{1}{e} y' e^{y'} \leq y'. \]

Combining the two last displays with $x' = 2 x_0$ and $y' = \Delta^u_{t,2} \mu_n(y_0,y]/2 \leq 1$, we finally get~\eqref{eq:goal-tightness} which achieves the proof of property~\ref{property:tightness}.

\section{Proof of Propositions~\ref{prop:no-bottleneck} and~\ref{prop:no-explosion}} \label{sec:proof-propositions}

\subsection{Proof of Proposition~\ref{prop:no-bottleneck}}

Fix some $t > 0$ and assume that $(\tv{\alpha_n}(t), n \geq 1)$ and $(\beta_n(t), n \geq 1)$ are bounded, and that~\eqref{eq:no-bottleneck} holds. In view of Lemma~\ref{lemma:inf}, in order to prove that $\bottleneck(t) = 0$ it is enough to prove that
\[ \liminf_{n \to +\infty} \inf_{0 \leq y \leq t} u_n(y, t, 1) > 0. \]

The goal is to apply the following lemma to derive a lower bound on $\inf_{0 \leq y \leq t} u_n(y, t, 1)$.

\begin{Lem} \label{lemmesuiterec}
	Let $\varepsilon > 0$, $M \geq 0$ and for each $i \geq 0$, $a_i, b_i \geq 0$ such that $a_i^2 - a_i b_i M \geq \varepsilon$. If $(w_i, 0 \leq i \leq I)$ satisfies $w_I > 0$, $0 \leq w_i \leq M$ for $0 \leq i \leq I$ and $w_i \geq w_{i+1} a_i - w_{i+1}^2 b_i$ for $0 \leq i \leq I-1$, then
	\[ w_i \geq \left(\frac{1}{w_I} \pi_{i, I-1} + \sum_{k=i}^{I-1} \pi_{i,k-1} b_k a_k^{-2} \right)^{-1}, \ 0 \leq i \leq I, \]
	where $\pi_{i,i-1} = 1$ and $\pi_{i,j} = \prod_{k=i}^j \big((1 + b_k^2 M^2\varepsilon^{-1}) a_k^{-1} \big)$ for $i \leq j$.
\end{Lem}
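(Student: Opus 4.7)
The plan is to linearize the quadratic recursion by passing to the reciprocals $v_i := 1/w_i$. First I would record the elementary consequences of the hypotheses. The assumption $a_i^2 - a_i b_i M \geq \varepsilon > 0$ forces $a_i > 0$, and then $a_i - b_i M \geq \varepsilon/a_i > 0$; combined with $w_{i+1} \leq M$ this gives $a_i - b_i w_{i+1} > 0$ for every $i$. A backwards induction from $w_I > 0$ using $w_i \geq w_{i+1}(a_i - b_i w_{i+1}) \geq w_{i+1}\varepsilon/a_i$ then shows that every $w_i$ is strictly positive, so the $v_i$'s are well defined.

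The heart of the proof is the pointwise inequality
\[
\frac{1}{w_{i+1}(a_i - b_i w_{i+1})} \;\leq\; \frac{1 + b_i^2 M^2/\varepsilon}{a_i}\, v_{i+1} + \frac{b_i}{a_i^2}.
\]
Clearing denominators and simplifying, this reduces to
\[
\frac{M^2}{\varepsilon}\, a_i (a_i - b_i w_{i+1}) \;\geq\; w_{i+1}^2,
\]
which is immediate from $w_{i+1} \leq M$ and the hypothesis $a_i(a_i - b_i M) \geq \varepsilon$ (which itself implies $a_i(a_i-b_iw_{i+1})\geq \varepsilon$). Combining with $v_i \leq 1/[w_{i+1}(a_i-b_iw_{i+1})]$, I obtain the linear recursion
\[
v_i \;\leq\; c_i\, v_{i+1} + d_i, \qquad c_i := \frac{1 + b_i^2 M^2/\varepsilon}{a_i}, \quad d_i := \frac{b_i}{a_i^2}.
\]

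A straightforward backwards induction on $i$ then unfolds this recursion to
\[
v_i \;\leq\; \Bigl(\prod_{k=i}^{I-1} c_k\Bigr) v_I + \sum_{k=i}^{I-1} \Bigl(\prod_{j=i}^{k-1} c_j\Bigr) d_k \;=\; \frac{\pi_{i,I-1}}{w_I} + \sum_{k=i}^{I-1} \pi_{i,k-1}\, b_k a_k^{-2},
\]
and inverting (which is legitimate since all quantities are positive) yields the claimed lower bound on $w_i$.

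The main obstacle is identifying the correct linearization, i.e.\ guessing the coefficient $(1+b_i^2M^2/\varepsilon)/a_i$. One might first try the naive bound $(a_i-b_iw_{i+1})^{-1} \leq (a_i-b_iM)^{-1}$ and then a partial fraction decomposition $\tfrac{1}{w(a_i-b_iw)} = \tfrac{1}{a_iw} + \tfrac{b_i}{a_i(a_i-b_iw)}$, but this leads to $a_i(a_i-b_iM) \geq \varepsilon$ in the denominator rather than producing the right multiplicative factor on $v_{i+1}$. The trick is instead to redistribute: absorb all the $w_{i+1}$-dependence into the coefficient of $v_{i+1}$ (boosting it by the factor $1 + b_i^2M^2/\varepsilon$) so that the additive term $d_k$ is just $b_k/a_k^2$, matching the form of $\pi_{i,k-1} b_k a_k^{-2}$ in the statement. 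Once this linearization is found, the rest is bookkeeping.
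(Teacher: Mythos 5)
Your proof is correct and takes essentially the same route as the paper's. The paper's argument introduces $r_i(x) = b_i^2 x^2/(a_i^2 - a_i b_i x)$ and rewrites $a_i w - b_i w^2 = a_i\left(\frac{1+r_i(w)}{w} + \frac{b_i}{a_i}\right)^{-1}$ exactly, then bounds $r_i(w_{i+1}) \leq b_i^2 M^2/\varepsilon$; you instead state the resulting linear inequality for $v_i = 1/w_i$ directly and verify it by clearing denominators, but this is the same computation with the same boost factor $1 + b_i^2 M^2/\varepsilon$, and the subsequent unrolling of the linear recursion is identical.
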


\begin{proof}
In the rest of the proof let $\rho_k = b_k^2 M^2 / \varepsilon$ and
	\[ r_i(x) = b_i^2 \frac{x^2}{a_i^2-a_i b_i x}, \ x \leq M. \]

	Note that $a_i^2 - a_i b_i M \geq \varepsilon$ by assumption, so $a_i>0$ and $r_i$ is well-defined and increasing. In particular, $r_i(x) \leq r_i(M) \leq b_i^2 M^2/\varepsilon = \rho_i$ so that writing
	\[ a_i w_{i+1} - b_i w_{i+1}^2 = a_i \left(\frac{1+r_i(w_{i+1})}{w_{i+1}} + \frac{b_i}{a_i} \right)^{-1} \]
	we obtain, using $w_i \geq a_i w_{i+1} - b_i w_{i+1}^2$,
	\[ w_i \geq a_i \left(\frac{1+\rho_i}{w_{i+1}} + \frac{b_i}{a_i} \right)^{-1}. \]
	
	This last inequality can be rewritten as $\overline w_i \leq (1+\rho_i)a_i^{-1} \overline w_{i+1} + b_i a_i^{-2}$ with $\overline w_i = 1/w_i$. It follows by induction that $\overline w_i \leq \overline w_I \pi_{i, I-1} + \sum_{k=i}^{I-1} \pi_{i,k-1} b_k / a_k^2$ which is exactly the desired result.
\end{proof}

Let $C = 1/(2 \overline c^u_{t,1})$ and
\[ \delta = \frac{1}{2} \min \left( \liminf_{n \to +\infty} \inf_{0 \leq i \leq I_n} \E\left( \xi_{i,n} ; \xi_{i,n} \leq nC + 1 \right), \ 1 \right), \]
so that $\delta > 0$ by~\eqref{eq:no-bottleneck}. We want to apply the previous lemma for $n$ large enough to $\varepsilon = \delta^2/16$, $M = \overline c^u_{t,1}$, $w_{i,n} = u_n(t^n_i, t, 1)$, $I_n = \gamma_n(t)$,
\[ a_{i,n} = 1 + (1+\epsilon_{i,n}(w_{i+1,n})) \int_{[-1/n, C]} x \nu_{i,n}(dx) \]
and
\[ b_{i,n} = (1+\epsilon_{i,n}(w_{i+1,n})) \int_{[-1/n, C]} x^2 \nu_{i,n}(dx). \]

By Lemma~\ref{bounded}, $M$ is finite and by definition, $w_{I_n, n} = 1 > 0$ and $0 \leq w_{i,n} \leq M$ for all $0 \leq i \leq I_n$. Since $1-e^{-x} \geq x-x^2$ for $x \geq -1$ and their exists a finite $n_0$ such that $1 + \epsilon_{i,n}(w_{i+1,n}) \geq 0$ for all $n \geq n_0$ and $i < I_n$ (by Lemmas~\ref{aspsin} and~\ref{bounded}), we obtain for all $n \geq \max(M, n_0)$ and $i < I_n$
\[ (1 + \epsilon_{i,n}(w_{i+1,n})) \int \left( 1 - e^{-x w_{i+1,n}} \right) \nu_{i,n}(dx) \geq w_{i+1,n} (a_{i,n}-1) - w_{i+1,n}^2 b_{i,n}. \]

Note that the left-hand side is by~\eqref{eq:def-epsilon} equal to $\psi_{i,n}(w_{i+1,n})$, and that $\psi_{i,n}(w_{i+1,n})$ is equal to $w_{i,n} - w_{i+1,n}$ according to the first equality in~\eqref{eq:def-psi-in} and the composition rule~\eqref{eq:composition-rule}. Thus we obtain
\begin{equation} \label{eq:w}
	w_{i,n} \geq w_{i+1,n} a_{i,n} - w_{i+1,n}^2 b_{i,n}, \ n \geq \max(M, n_0), i < I_n.
\end{equation}

In order to apply Lemma~\ref{lemmesuiterec} it remains to control the sequences $(a_{i,n})$ and $(b_{i,n})$. By definition of $\delta$, 
there exists a finite $n_1$ such that for all $n \geq n_1$ and $i \leq I_n$
\[ \int_{[-1/n, C]} x \nu_{i,n}(dx) = \E(\xi_{i,n} ; \xi_{i,n} \leq nC + 1) - \P(\xi_{i,n} \leq nC + 1) \geq \delta - 1. \]

Let $\eta > 0$ such that $1 + (1-\eta)(\delta-1) \geq \delta / 2$, and, according to Lemma~\ref{aspsin}, there exists $n_2$ such that $\lvert \epsilon_{i,n}(w_{i+1,n}) \rvert \leq \eta$ for all $n \geq n_2$ and $i < I_n$. Then from the definition of $a_{i,n}$ (and since $\delta < 1$) it follows that
\begin{equation} \label{eq:lower-bound-a}
	a_{i,n} \geq 1 + (1-\eta)(\delta-1) \geq \delta / 2, \ n \geq \max(n_1, n_2), i \leq I_n.
\end{equation}

We now proceed to controlling $a_{i,n}^2 - a_{i,n} b_{i,n} M$. First, we note that $\nu_{i,n}(\{-1/n\}) \leq n$ and
\[ \int_{[-1/n, C]} x^2 \nu_{i,n}(dx) = \frac{\nu_{i,n}(\{-1/n\})}{n^2} + \int_{[0, C]} x^2 \nu_{i,n}(dx) \leq \frac{1}{n} + C \int_{[0, C]} x \nu_{i,n}(dx) \]
and so we get $\int_{[-1/n, C]} x^2 \nu_{i,n}(dx) \leq 1/n + C + C \int_{[-1/n, C]} x \nu_{i,n}(dx)$. Then
\[ b_{i,n} \leq \frac{2}{n} + C (1+\epsilon_{i,n}(w_{i+1,n})) + C (a_{i,n}-1) \leq \kappa_n + C a_{i,n} \]
where $\kappa_n = 2/n + C \overline c^\epsilon_{n,t}(M)$. In particular, there exists by Lemma~\ref{aspsin} a finite $n_3$ such that $\kappa_n \leq \delta / (8M)$ for $n \geq n_3$, so that 
\[ a_{i,n}^2 - a_{i,n} b_{i,n} M \geq a_{i,n}^2 - a_{i,n} (\delta/(8M) + C a_{i,n}) M = \frac{1}{2} a_{i,n}^2 - \frac{\delta}{8} a_{i,n} \geq \frac{\delta^2}{16} \]
for $n \geq \max(n_1, n_2, n_3)$ and $i \leq I_n$ thanks to \eqref{eq:lower-bound-a}.
Thus for any $n \geq \max(M, n_0, n_1, n_2, n_3)$, the assumptions of Lemma~\ref{lemmesuiterec} are satisfied and we obtain
\[ w_{i,n} \geq \left(\pi_{i, I_n-1, n} + \sum_{k=i}^{I_n-1} \pi_{i,k-1, n} b_{k,n} a_{k,n}^{-2} \right)^{-1}, \ 0 \leq i \leq I_n, \]
where $\pi_{i,i-1,n} = 1$ and $\pi_{i,j,n} = \prod_{k=i}^j \big((1 + b_{k,n}^2 M^2\varepsilon^{-1}) a_{k,n}^{-1} \big)$ for $i \leq j$. To end the proof it remains to show that
\begin{equation} \label{eq:goal-limsup}
	\limsup_{n \geq 1} \sup_{0 \leq i \leq I_n} \left( \pi_{i, I_n-1, n} + \sum_{k=i}^{I_n-1} \pi_{i,k-1, n} b_{k,n} a_{k,n}^{-2} \right) < +\infty.
\end{equation}

Fix some $n \geq \max(M, n_0, n_1, n_2, n_3)$ and $0 \leq i \leq j < I_n$: we derive an upper bound on $\pi_{i,j,n}$, which we write as
\begin{equation} \label{eq:decomposition-pi}
	\pi_{i,j,n} = \exp \left( - \sum_{k=i}^j \log a_{k,n} \right) \times \prod_{k=i}^j \left(1+b_{k,n}^2 M^2 \varepsilon^{-1} \right).
\end{equation}

Let in the sequel $C' =  \sup \beta_n(t) + \sup \beta_n(t)^3$ and  the suprema are taken over $n \geq 1$ and note that
\[ \sum_{k=0}^{I_n-1} \beta_{k,n}^2 \leq \sum_{k=0}^{I_n-1} \beta_{k,n}\indicator{\beta_{k,n} \leq 1} +\beta_{n}(t)^2 \sum_{k=0}^{n-1} \indicator{\beta_{k,n} > 1} \leq C'.\]
Using convexity and $0 \leq b_{k,n} \leq 2 (1+C^2) \beta_{k,n}$, we get
\begin{equation} \label{eq:bound-product}
	 \prod_{k=i}^j \left(1+b_{k,n}^2 M^2 \varepsilon^{-1}\right) \leq \exp \left( \sum_{k=i}^j b_{k,n}^2 M^2 \varepsilon^{-1} \right) \leq \exp \left( 4 (1+C^2)^2 M^2 \varepsilon^{-1} C' \right).
\end{equation}

We now control the sum of the right-hand side of~\eqref{eq:decomposition-pi}. Since $n \geq \max(M, n_0)$ we have by~\eqref{eq:lower-bound-a} that $a_{k,n} \geq \delta / 2$ for $k < I_n$. In particular, if $\ell = \inf (\log x/ (x-1))$ where the infimum is taken over $x \geq \delta / 2$, $\ell \in (0,\infty)$ and we have $\log a_{k,n} \geq \ell (a_{k,n}-1)^-$, with $x^- = \min(x,0)$. Further,
\begin{align*}
	\int_{[-1/n, C]} x \nu_{k,n}(dx) & \geq -\frac{1}{n} \nu_{k,n}(\{-1/n\}) + \int_{[0, C]} \frac{x}{1+x^2} \nu_{k,n}(dx)\\
	& = -\frac{1}{n (n^2+1)} \nu_{k,n}(\{-1/n\}) + \alpha_{k,n}
 - \int_{(C,\infty)} \frac{x}{1+x^2} \nu_{k,n}(dx)\\
	& \geq - \lvert \alpha_{k,n} \rvert - (2 C^{-1}+1) \beta_{k,n}.
\end{align*}

Thus $\log a_{k,n} \geq - 2\ell\lvert \alpha_{k,n} \rvert - 2\ell (2 C^{-1}+1) \beta_{k,n}$ and summing over $k = i, \ldots, j$, we obtain 
that $\sum_{k=i}^j \log a_{k,n}$ is bounded. Recalling \eqref{eq:bound-product} and \eqref{eq:decomposition-pi}, we get that $\pi_{i,j,n}$ is bounded. Adding that
\[\sum_{k=0}^{I_n-1} \frac{b_{k,n}}{a_{k,n}^2} \leq \frac{8(1+C^2)}{\delta^2} \sum_{k=0}^{I_n-1} \beta_{k,n} \leq \frac{8(1+C^2)}{\delta^2} C', \]
the proof of~\eqref{eq:goal-limsup} and thus of Proposition~\ref{prop:no-bottleneck} is finally complete.

\subsection{Proof of Proposition~\ref{prop:no-explosion}} Let in the rest of the proof $\widetilde \nu_n$ be the following increasing, c\`adl\`ag function
\[ \widetilde \nu_n(t) = \int_{\R \times (0,t]} \lvert x \rvert \nu_n(dx\,dy) = n \sum_{i=0}^{\gamma_n(t)-1} \E (\lvert \overline \xi_{i,n}\rvert). \]

Assume that~\eqref{eq:condition-first-moment} holds, i.e., $\sup_{n \geq 1} \widetilde \nu_n(t) < +\infty$: we first show that it implies the two other assumptions. That the sequences $(\tv{\alpha_n}(t), n \geq 1)$ and $(\beta_n(t), n \geq 1)$ are bounded comes from~\eqref{eq:condition-first-moment} by summing from $i = 0$ to $\gamma_n(t)-1$ the two following inequalities:
\[ \vert \alpha_{i,n} \vert \leq n \E \left( \frac{\vert \overline
\xi_{i,n} \vert }{1+\overline \xi_{i,n}^2} \right) \leq  n \E \left(
\vert \overline \xi_{i,n} \vert  \right) \ \text{ and } \ \beta_{i,n} = \frac{1}{2} n \E \left( \frac{\overline
\xi_{i,n}^2}{1+\overline \xi_{i,n}^2} \right) \leq \frac{n}{2} \E \left( \vert
\overline \xi_{i,n} \vert  \right). \]

We now show that~\eqref{eq:tightness} also holds. By Lemma~\ref{bounded} there exists a finite constant $C_t$ such that $u_n(s,y,\lambda)\leq C_t$ for all $y \in [s,t]$ and $\lambda \leq 1$. Further, by Lemma~\ref{aspsin} there exists $n_t$ such that $\vert \epsilon_{i,n} (v)\vert \leq 1$ for any $n \geq n_t$, $v \leq C_t$ and $i\leq \gamma_n(t)$. Finally, invoking Lemma~\ref{relun} and using $1-\exp(-\lambda x)\leq \lambda \vert x \vert$ for $x\in\R$ and $\lambda \geq 0$, we get 
\[ u_n(s,t,\lambda) \leq \lambda + 2 \sum_{i=\gamma_n(s)+1}^ {\gamma_n(t)} u_n(t_{i}^n,t,\lambda) \int \vert x \vert\nu_{i-1,n}(dx) = \lambda + \int_{(s,t]} u_n(y, t, \lambda) \widetilde \nu_n(dy). \]

Thus Lemma~\ref{lemma:gronwall} implies that $u_n(s,t,\lambda) \leq \lambda + \lambda \widetilde \nu_n(s,t] e^{\widetilde \nu_n(s,t]}$, and consequently
\begin{equation} \label{eq:ttmp}
	\sup_{n \geq 1, s \leq y \leq t} u_n(s,y,\lambda) \leq \lambda \left[ 1 + \sup_{n \geq 1} \widetilde \nu_n(t) \exp \left( \sup_{n \geq 1} \widetilde \nu_n(t) \right) \right].
\end{equation}

Since $\sup_{n \geq 1} \widetilde \nu_n(t)$ is finite, letting $\lambda \to 0$ in~\eqref{eq:ttmp} we see that $\sup u_n(s,y,\lambda) \to 0$ as $\lambda \to 0$, where the supremum is taken over $n \geq 1$ and $s \leq y \leq t$. To see that this implies~\eqref{eq:tightness}, we only have to write for any $A \geq 1$
\[ \P\left(X_n(y) \geq A \ \vert \ X_n(s)=1\right) = \P\left(1-e^{-X_n(y)/A} \geq 1-1/e \ \vert \ X_n(s)=1 \right)\leq \frac{1-e^{-u_n(s,y,1/A)}}{1-1/e}. \]

\medskip

We now assume that $(\tv{\alpha_n}(t), n \geq 1)$ and $(\beta_n(t), n \geq 1)$ are bounded and that~\eqref{eq:tightness} holds, and we show that there exists
$n(k)\rightarrow \infty$ such that $\lim_{n \to \infty} u_{n(k)}(s,t,\lambda) = 0$ for every $s < \bottleneck(t)$. First, note that
\begin{equation} \label{limitecomp}
	\lim_{\lambda \to 0} \left( \sup_{n \geq 1, \, 0\leq s \leq y \leq t}u_n(s,y,\lambda) \right) = 0.
\end{equation}

Indeed, for any $0\leq s \leq y \leq t$ and $A > 0$, we can write
\[ 1 - e^{-u_n(s,y,\lambda)} = \E\left[1-\exp(-\lambda X_n(y)) \ \vert \ X_n(s)=1 \right] \leq 1-e^{-\lambda A} + \P(X_n(y)\geq A \ \vert \ X_n(s)=1) \]
which gives
\[ \sup_{n \geq 1, 0\leq s \leq y \leq t} u_n(s, y, \lambda) \leq - \log \left( e^{-\lambda A} - \sup_{n \geq 1, 0 \leq s \leq y \leq t} \P(X_n(y) \geq A \ \vert \ X_n(s)=1) \right). \]

Letting first $\lambda \to 0$ and then $A \to +\infty$ and using~\eqref{eq:tightness} gives~\eqref{limitecomp}. Further, Lemma~\ref{lemma:inf} guarantees the existence of sequences $(n(k))$ and $(y_k)$ such that $y_k \rightarrow \bottleneck (t)$, $n(k) \to \infty$ and $u_{n(k)}(y_k, t, \lambda) \to 0$ as $k \to +\infty$. Then, the composition rule~\eqref{eq:composition-rule} shows that for every $k \geq 1$ and $s\leq y_k$,
\[ u_{n(k)} \left( s, t, \lambda \right) = u_{n(k)} \left( s, y_k, u_{n(k)} \left( y_k, t, \lambda \right) \right) \leq \sup_{n \geq 1, s \leq y \leq t} u_n \left( s, y, u_{n(k)} \left( y_k, t, \lambda \right) \right). \]

Since $u_{n(k)}(y_k, t, \lambda) \to 0$ as $k \to +\infty$, \eqref{limitecomp} implies that $\sup\{u_{n(k)} \left( s, t, \lambda \right) : s\leq y_k\}\to 0$ which achieves to prove that $ u_{n(k)}(s,t,\lambda) \rightarrow 0$ for every $s<\bottleneck(t)$.

\appendix

\section{Proof of Lemma~\ref{lemma:equivalence-GW}} \label{appendix:proof-GW}

Assume that ${\offdistr}_{i,n} = {\offdistr}_{0,n}$ and that $\gamma_n(t) = \lfloor \Gamma_n t \rfloor$ for some sequence $\Gamma_n \to +\infty$. Define ${\offdistr}_n = {\offdistr}_{0,n}$ and $\xi_n = \xi_{0,n}$, and for each $n \geq 1$ let $(\overline \xi_n(k), k, \geq 1)$ be i.i.d.\ random variables distributed as $\overline \xi_n$. Then according to Theorem~$3.1$ in Grimvall~\cite{Grimvall74:0}, $(X_n)$ converges in the sense of finite-dimensional distribution if and only if the sequence of random variables $(\sum_{i=1}^{n \Gamma_n} \overline \xi_n(k), n \geq 1)$ converges weakly.

Moreover, since $q_{i,n} = q_n$ and $\gamma_n(t) = \lfloor \Gamma_n t \rfloor$, we have
\[ \alpha_n(t) = n \lfloor \Gamma_n t \rfloor \E \left( \frac{\overline \xi_n}{1 + \overline \xi_n^2} \right), \ \beta_n(t) = \frac{1}{2} n \lfloor \Gamma_n t \rfloor \E \left( \frac{\overline \xi_n^2}{1 + \overline \xi_n^2} \right) \]
and $\nu_n([x,\infty) \times (0,t]) = n \lfloor \Gamma_n t \rfloor \P(\overline \xi_n \geq x)$. In this context Assumption~\ref{assumptions} is therefore equivalent to the following assumption.

\begin{assumption}
	There exist $a \in \R$, $b \geq 0$ and a positive, $\sigma$-finite measure $F$ with support in $(0,\infty)$ such that
	\begin{multline} \tag{B1} \label{eq:assumption-GW}
		n \Gamma_n \E \left( \frac{\overline \xi_n}{1 + \overline \xi_n^2} \right) \mathop{\longrightarrow}_{n \to +\infty} a, \ n \Gamma_n \E \left( \frac{\overline \xi_n^2}{1 + \overline \xi_n^2} \right) \mathop{\longrightarrow}_{n \to +\infty} b,\\
		\text{and } \ n \Gamma_n \P(\overline \xi_n \geq x) \mathop{\longrightarrow}_{n \to +\infty} F([x,\infty))
	\end{multline}
	where the last convergence holds for every $x > 0$ such that $F(\{x\}) = 0$.
\end{assumption}

Under this assumption we then have $\alpha(t) = at$, $\beta(t) = bt$ and $\nu(dx \, dt) = dt F(dx)$. Thus to prove Lemma~\ref{lemma:equivalence-GW} we have to prove that~\eqref{eq:assumption-GW} is equivalent to the weak convergence of the sum $\sum_{k=1}^{n \Gamma_n} \overline \xi_n(k)$.
\\

Assume that~\eqref{eq:assumption-GW} holds. Then by~\ref{property:fdd-convergence} of Theorem~\ref{thm:main}, the sequence $(X_n, n \geq 1)$ converges in the sense of finite-dimensional distributions. By Grimvall~\cite[Theorem~$3.1$]{Grimvall74:0}, this implies the weak convergence of $\sum_{k=1}^{n \Gamma_n} \overline \xi_n(k)$.
\\

Assume now that $\sum_{k=1}^{n \Gamma_n} \overline \xi_n(k)$ converges weakly. Then Theorem~$1$ of \S~$25$ in Gnedenko and Kolmogorov~\cite{Gnedenko68:0} immediately gives the existence of $F$ with $\int (1\wedge x^2) F(dx) < +\infty$ such that the last convergence in~\eqref{eq:assumption-GW} holds. Let us prove the two first convergences of \eqref{eq:assumption-GW}. In the rest of the proof let $G \subset (0,\infty)$ denote the set of continuity points of $F$, for $\kappa = 1$ or $2$ let $m_\kappa(x) = \lvert x \lvert ^\kappa/(1+x^2)$ and fix some $\kappa \in\{1,2\}$.

Since $n \Gamma_n \P(\overline \xi_n > x) \to F((x,\infty))$ for $x \in G$, it follows that
\[ n \Gamma_n \E\left(m_\kappa(\overline \xi_n); \lvert \overline \xi_n\lvert > \varepsilon \right) \mathop{\longrightarrow}_{n \to +\infty} \int_{x > \varepsilon} m_\kappa(x) F(dx) \]
for any $\varepsilon \in G$. This can for instance be seen by considering the weak convergence of the random variables $\overline \xi_n$ conditioned on $\lvert \overline \xi_n \lvert > \varepsilon$. Moreover, according to Corollary~$15.16$ in Kallenberg~\cite{Kallenberg02:0}, there exists a finite number $d_\kappa$ such that $n \Gamma_n \E(\xi_n^\kappa; \lvert \overline \xi_n \lvert \leq \varepsilon) \to d_\kappa + L_\kappa(\varepsilon)$ for any $\varepsilon \leq 1$ in $G$, and where $L_1(\varepsilon) = - \int_{\varepsilon < x \leq 1} x F(dx)$ and $L_2(\varepsilon) = \int_{x \leq \varepsilon} x^2 F(dx)$. Note in particular that
\[ \sup_{n \geq 1} n \Gamma_n \E\left(m_2(\overline \xi_n)\right) < +\infty, \]
since
\begin{align*}
	n \Gamma_n \E\left(m_2(\overline \xi_n) \right) & = n \Gamma_n \E\left(m_2(\overline \xi_n) ; \overline \xi_n \leq \varepsilon \right) + n \Gamma_n \E\left(m_2(\overline \xi_n) ; \overline \xi_n > \varepsilon \right)\\
	& \leq n \Gamma_n \E\left(\overline \xi_n^2 ; \overline \xi_n \leq \varepsilon \right) + n \Gamma_n \E\left(m_2(\overline \xi_n) ; \lvert \overline \xi_n \rvert > \varepsilon \right)
\end{align*}
(note that $\overline \xi_n > \varepsilon \Leftrightarrow \lvert \overline \xi_n \rvert > \varepsilon$ for $n$ large enough, since $\overline \xi_n \geq -1/n$). We now write
\begin{align*}
	\E \left( \frac{\overline \xi_n^\kappa}{1+\overline \xi_n^2} \right) & = \E \left( \frac{\overline \xi_n^\kappa}{1+\overline \xi_n^2} - \overline \xi_n^\kappa ; \lvert \overline \xi_n \rvert \leq \varepsilon \right) + \E \left( \frac{\overline \xi_n^\kappa}{1+\overline \xi_n^2} ; \lvert \overline \xi_n \rvert > \varepsilon \right) + \E \left( \overline \xi_n^\kappa ; \lvert \overline \xi_n \rvert \leq \varepsilon \right)\\
	& = -\E \left( \frac{\overline \xi_n^{\kappa+2}}{1+\overline \xi_n^2} ; \lvert \overline \xi_n \rvert \leq \varepsilon \right) + \E \left( m_\kappa(\overline \xi_n) ; \lvert \overline \xi_n \rvert > \varepsilon \right) + \E \left( \overline \xi_n^\kappa ; \lvert \overline \xi_n \rvert \leq \varepsilon \right)
\end{align*}
to obtain
\begin{multline*}
	\left\lvert n \Gamma_n \E \left( \frac{\overline \xi_n^\kappa}{1+\overline \xi_n^2} \right) - n \Gamma_n \E \left( m_\kappa(\overline \xi_n) ; \lvert\overline \xi_n\rvert > \varepsilon \right) - n \Gamma_n \E \left( \overline \xi_n^\kappa ; \lvert \overline \xi_n \rvert \leq \varepsilon \right) \right\rvert\\
	\leq \varepsilon^\kappa n \Gamma_n \E\left(m_2(\overline \xi_n)\right) \leq \varepsilon^\kappa \sup_{n \geq 1} n \Gamma_n \E\left(m_2(\overline \xi_n)\right).
\end{multline*}

Letting first $n \to +\infty$ with $\varepsilon \in G$ and then $\varepsilon \to 0$, we thus obtain
\[ \lim_{\varepsilon \to 0} \limsup_{n \to +\infty} \left\lvert n \Gamma_n \E \left( \frac{\overline \xi_n^\kappa}{1+\overline \xi_n^2} \right) - \int_{x > \varepsilon} m_\kappa(x) F(dx) - d_\kappa - L_\kappa(\varepsilon) \right\rvert = 0. \]

For $\kappa = 2$ we have
\[ \int_{x > \varepsilon} m_2(x) F(dx) - d_2 - L_2(\varepsilon) \mathop{\longrightarrow}_{\varepsilon \to 0} \int m_2(x) F(dx) \]
which is finite, while for $\kappa = 1$ we have by definition of $L_1$
\[ \int_{x > \varepsilon} m_1(x) F(dx) - d_1 - L_1(\varepsilon) = \int_{\varepsilon < x \leq 1}(m_1(x) - x) F(dx) - d_1 + \int_{x > 1} m_1(x) F(dx). \]

Since $m_1(x) - x \sim -x^2$ as $x \to 0$, $\int_{\varepsilon < x \leq 1}(m_1(x) - x) F(dx)$ converges by the dominated convergence theorem to $\int_{x \leq 1}(m_1(x) - x) F(dx)$ as $\varepsilon$, which is finite since $\int (1 \wedge x^2) F(dx) < +\infty$. This completes the proof.

\section{Proof that the key constants are finite} \label{appendix:constants}

\begin{Lem}[Control of $c_1$, $c_2$ and $c_3$] \label{lemma:c_1+c_2+c_2}
	For any $C \geq 0$ and $0 < \eta < T$, the constants $c_1(C)$, $c_2(\eta, T)$ and $c_3(\eta, T)$ are finite.
\end{Lem}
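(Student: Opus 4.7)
The plan is to exploit the representations \eqref{eq:g-h-Phi} of $g$ and $h$ in terms of the auxiliary functions $\Phi_1$ and $\Phi_2$ from \eqref{eq:def-Phi}, which reduces the problem to checking boundedness of a few explicit smooth functions on simple regions. Both $\Phi_1$ and $\Phi_2$ are smooth on $\R$ with well-understood asymptotics: $\Phi_1(y) \sim 1/y$ and $\Phi_2(y) \to 1/2$ as $y \to +\infty$, so $\Phi_1$ is bounded on every set of the form $[-C,\infty)$ and $\Phi_2$ is bounded on $[0,\infty)$.

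For $c_1(C)$, one uses \eqref{eq:g-h-Phi} to rewrite
\[ \frac{2 \lvert g(x,\lambda) \rvert (1+x^2)}{x^2} = 2 \bigl\lvert 1 - e^{-\lambda x} - \lambda^2 \Phi_1(\lambda x) \bigr\rvert. \]
For $x \ge -1$ and $\lambda \in [0,C]$ we have $\lambda x \in [-C,\infty)$, so $|1-e^{-\lambda x}| \le e^C$ and $\lambda^2 |\Phi_1(\lambda x)| \le C^2 \sup_{[-C,\infty)} |\Phi_1| < \infty$. This gives $c'_1(C) < \infty$, and hence $c_1(C) < \infty$.

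For $c_2(\eta,T)$, I set $F(x,\lambda) := (1+x^2) h(x,\lambda)/x^2 = 1 - e^{-\lambda x} + \lambda^2 \Phi_2(\lambda x)$ for $x>0$ (extended by $F(0,\lambda) := 0$ through continuity), so that the ratio in \eqref{eq:c2+c3} equals $(F(x,y) - F(x,y'))/(y-y')$. By the mean value theorem this is bounded by $\sup\{|\partial_\lambda F(x,\lambda)| : x \ge 0, \, \lambda \in [\eta, T]\}$, where
\[ \partial_\lambda F(x,\lambda) = x e^{-\lambda x} + 2\lambda \Phi_2(\lambda x) + \lambda^2 x \Phi_2'(\lambda x). \]
The first term is bounded by $xe^{-\eta x} \le (e\eta)^{-1}$; the second by $2T \sup_{[0,\infty)} |\Phi_2|$; and the third, rewritten as $\lambda \cdot (\lambda x)\Phi_2'(\lambda x)$, is bounded by $T \sup_{y \ge 0} |y\Phi_2'(y)|$. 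A direct computation gives
\[ \Phi_2'(y) = \frac{(y+2)e^{-y} + y - 2}{y^3}, \]
from which a Taylor expansion at $0$ yields $\Phi_2'(0^+) = 1/6$ (bounded near $0$), while $\Phi_2'(y) \sim y^{-2}$ as $y \to +\infty$, so indeed $y\Phi_2'(y)$ is bounded on $[0,\infty)$. Hence $c_2(\eta,T) < \infty$, and $c_3(\eta,T) = 1 + T + c_2(\eta,T) < \infty$ is then trivial.

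The only mildly delicate point is controlling the third term in $\partial_\lambda F$, which requires the non-obvious boundedness of $y\Phi_2'(y)$ on $[0,\infty)$; this is handled by the explicit formula for $\Phi_2'$ together with Taylor and asymptotic analyses, both elementary. Everything else is a straightforward application of the representations \eqref{eq:g-h-Phi} and of the mean value theorem.
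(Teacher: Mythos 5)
Your proof is correct and follows essentially the same strategy as the paper: for $c_1(C)$, use the representation~\eqref{eq:g-h-Phi} together with the boundedness of $\Phi_1$ on $[-C,\infty)$; for $c_2(\eta,T)$, differentiate $H_x(\lambda) := h(x,\lambda)(1+x^2)/x^2$ in $\lambda$ and apply the mean value theorem. The only real difference is the algebraic form you obtain for the $\lambda$-derivative. You apply the product and chain rules to the $\Phi_2$-representation, getting $\partial_\lambda F(x,\lambda) = x e^{-\lambda x} + 2\lambda\Phi_2(\lambda x) + \lambda^2 x\Phi_2'(\lambda x)$, and then have to verify boundedness of $y\Phi_2'(y)$ on $[0,\infty)$, which you do correctly via the explicit formula and the asymptotics at $0$ and $+\infty$. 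The paper instead observes that $\tfrac{d}{dy}\bigl[y^2\Phi_2(y)\bigr] = e^{-y}-1+y = y^2\Phi_1(y)$, so the last two terms collapse into a single $\Phi_1$-term, and the extra computation of $\Phi_2'$ is avoided. (Incidentally, the paper's displayed formula $H_x'(y) = x e^{-yx} + y\Phi_1(yx)$ contains a typo: the correct simplification is $H_x'(y) = xe^{-yx} + y^2 x\,\Phi_1(yx)$, which is bounded by $\tfrac1\eta\sup_{v\geq 0}(ve^{-v}) + T\sup_{w\geq 0}\lvert w\Phi_1(w)\rvert$ rather than by the displayed quantity; your two extra terms are exactly the split form of $\lambda^2 x\Phi_1(\lambda x)$, so both routes land on a finite bound.)
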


\begin{proof}
	Since $\lim_{x\to+\infty} \Phi_1(x) = 0$, $\Phi_1$ is bounded on $[-C,+\infty)$ for any $C > 0$ and so the constant $c'_1(C)$ of~\eqref{eq:c_1} is finite in view of~\eqref{eq:g-h-Phi}. In particular $c_1(C)$ is finite for every $C \geq 0$. Let $0 < \eta < T$ and $\eta \leq y, y' \leq T$, and fix $x \geq 0$: $c_2(\eta, T)$ is finite because
	\[ \left\lvert \frac{h(x, y) - h(x, y')}{(y - y') x^2 / (1+x^2)} \right\rvert \leq \sup_{\eta \leq v \leq T} \left\lvert H_x'(v) \right\rvert, \]
	with $H_x(y) = h(x, y) (1+x^2) / x^2$. One can compute $H_x'(y) = x e^{-y x} + y \Phi_1(y x)$ and so
	\[ \sup_{\eta \leq v \leq T} \left\lvert H_x'(v) \right\rvert \leq \frac{1}{\eta} \sup_{v \geq 0} (v e^{-v}) + T \sup_{v \geq 0} \lvert\Phi_1(v)\rvert. \]
	
	This upper bound being independent of $x$, we get the finiteness of $c_2(\eta, T)$, and hence of $c_3(\eta, T)$.
\end{proof}

\begin{Lem}[Control of $\overline c_{n,t}^\epsilon(C)$] \label{aspsin}
	Fix $t \geq 0$ and assume that the sequences $(\tv{\alpha_n}(t), n \geq 1)$ and $(\beta_n(t), n \geq 1)$ are bounded. Then for any $C \geq 0$, we have $\overline c_{n,t}^\epsilon(C) \to 0$ as $n$ goes to infinity.
\end{Lem}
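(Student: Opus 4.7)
The plan is to reduce the problem to a Taylor expansion of $-\log(1-y)$ near $y=0$, using Lemma~\ref{lemma:inequality-c1} to show that the relevant argument $y$ is uniformly small.

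First, I would unpack the definitions. Setting $I_{i,n}(\lambda) = \int(1 - e^{-\lambda x}) \nu_{i,n}(dx)$ and $y_{i,n}(\lambda) = I_{i,n}(\lambda)/n$, the second equality in~\eqref{eq:def-psi-in} gives $\psi_{i,n}(\lambda) = -n\log(1 - y_{i,n}(\lambda))$. Combined with~\eqref{eq:def-epsilon}, this yields, whenever $I_{i,n}(\lambda) \ne 0$,
\[
1 + \epsilon_{i,n}(\lambda) \;=\; \frac{\psi_{i,n}(\lambda)}{I_{i,n}(\lambda)} \;=\; \frac{-\log(1 - y_{i,n}(\lambda))}{y_{i,n}(\lambda)},
\]
so $\epsilon_{i,n}(\lambda) = \sum_{k\ge 1} y_{i,n}(\lambda)^k/(k+1)$ on the range where $|y_{i,n}(\lambda)| < 1$. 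In particular, there exists a universal constant $K$ such that $|\epsilon_{i,n}(\lambda)| \le K |y_{i,n}(\lambda)|$ as soon as $|y_{i,n}(\lambda)| \le 1/2$.

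Next, I would bound $y_{i,n}(\lambda)$ uniformly in $i$ and $\lambda \in [0,C]$. By Lemma~\ref{lemma:inequality-c1},
\[
\sup_{0 \le \lambda \le C} \bigl| I_{i,n}(\lambda) \bigr| \;\le\; c_1(C)\, \mu_n(t_i^n, t_{i+1}^n] \;\le\; c_1(C)\, \mu_n(t),
\]
where the last inequality uses $(t_i^n, t_{i+1}^n] \subset (0,t]$ for all $0 \le i < \gamma_n(t)$ (and the positivity of the measure $\mu_n$). Since $\mu_n(t) = \tv{\alpha_n}(t) + \beta_n(t)$ and both sequences are bounded by hypothesis, there exists a constant $M$ (depending on $t$ but not on $n$ or $i$) with $\mu_n(t) \le M$ for all $n$. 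Dividing by $n$, we get
\[
\sup_{0 \le i < \gamma_n(t)} \; \sup_{0 \le \lambda \le C} \bigl| y_{i,n}(\lambda) \bigr| \;\le\; \frac{c_1(C)\, M}{n} \;\xrightarrow[n\to\infty]{}\; 0.
\]

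Finally, I would combine both pieces. For $n$ large enough, $c_1(C) M / n \le 1/2$, so the bound from the first step applies uniformly in $i \le \gamma_n(t)$ and $\lambda \in [0,C]$ (and trivially when $I_{i,n}(\lambda) = 0$, since then $\epsilon_{i,n}(\lambda) = 0$). This gives
\[
\overline c_{n,t}^\epsilon(C) \;\le\; K \cdot \frac{c_1(C)\, M}{n} \;\xrightarrow[n\to\infty]{}\; 0,
\]
which is the desired conclusion. There is no real obstacle here beyond ensuring that the remainder term in the Taylor expansion is controlled uniformly, which is automatic once $y_{i,n}(\lambda)$ is driven into a fixed compact neighborhood of the origin; Lemma~\ref{lemma:inequality-c1} plus the boundedness hypothesis deliver exactly this.
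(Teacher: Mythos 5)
Your proposal is correct and follows essentially the same route as the paper: apply Lemma~\ref{lemma:inequality-c1} together with the boundedness of $\mu_n(t)=\tv{\alpha_n}(t)+\beta_n(t)$ to show that $\frac{1}{n}\int(1-e^{-\lambda x})\,\nu_{i,n}(dx)$ is $O(1/n)$ uniformly in $i<\gamma_n(t)$ and $\lambda\in[0,C]$, then use the elementary fact that $\bigl(-\log(1-y)-y\bigr)/y\to 0$ as $y\to 0$ (which you package as a power-series estimate, and the paper as a supremum over a shrinking interval). The two arguments are the same in substance.
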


\begin{proof}
	Fix $t$ and $C \geq 0$ and define
	\[ I_{t, C} = \sup \left\{ \left\lvert\int (1 - e^{-\lambda x}) \nu_{i,n}(d x) \right\rvert : 1 \leq n, 0 \leq i < \gamma_n(t), 0 \leq \lambda \leq C \right\}. \]
	Then~\eqref{eq:ineq-g-2} entails
	\begin{multline*}
		I_{t,C} \leq c_1(C) \sup \left\{ \mu_n(t_{i}^n, t_{i+1}^n]: n \geq 1, 0 \leq i < \gamma_n(t) \right\}\\
		\leq c_1(C) \sup_{n \geq 1} \left( \sum_{i=0}^{\gamma_n(t)-1} \mu_n(t_{i}^n, t_{i+1}^n] \right) = c_1(C) \sup_{n \geq 1} \mu_n(t).
	\end{multline*}

	Since $\mu_n(t) = \tv{\alpha_n}(t) + \beta_n(t)$ the sequence $(\mu_n(t))$ is bounded by assumption, showing that $I_{t,C}$ is finite. It follows from the definitions~\eqref{eq:def-psi-in} and~\eqref{eq:def-epsilon} of $\psi_{i,n}$ and $\epsilon_{i,n}$ that for any $i \geq 0$
\[ \epsilon_{i,n}(\lambda) = \frac{-\log\left(1-\frac{1}{n} \int (1 - e^{-\lambda x}) \nu_{i,n}(d x) \right) - \frac{1}{n} \int (1 - e^{-\lambda x}) \nu_{i,n}(d x)}{\frac{1}{n}\int (1 - e^{-\lambda x}) \nu_{i,n}(d x)} \]
and so
\[ \overline c_{n, t}^\epsilon(C) \leq \sup_{\lvert x\rvert \leq I_{t,C} / n} \left\lvert \frac{-\log(1-x) - x}{x} \right\rvert. \]

Letting $n \to +\infty$ achieves the proof of the result.
\end{proof}

\begin{Lem}[Control of $\overline c_{t, \lambda}^u$] \label{bounded}
	Fix $t \geq 0$ and assume that the two sequences $(\tv{\alpha_n}(t), n \geq 1)$ and $(\beta_n(t), n \geq 1)$ are bounded. Then for any $\lambda \geq 0$ the constant $\overline c_{t, \lambda}^u$ is finite and moreover
\[ \sup \left\{ \overline c_{s, \lambda}^u: 0 \leq s \leq t, \lambda \leq 1 \right\} < + \infty. \]
\end{Lem}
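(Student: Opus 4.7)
The plan is to obtain the bound on $\overline c_{t,\lambda}^u$ by a stopping-time bootstrap driven by the backward recursion of Lemma~\ref{relun}, combined with the one-step estimate obtained by substituting~\eqref{eq:ineq-g-2} into the definition~\eqref{eq:def-epsilon}:
\[
	\lvert \psi_{i,n}(\mu)\rvert \leq \bigl(1+\lvert\epsilon_{i,n}(\mu)\rvert\bigr)\,c_1(C)\,\mu_n(t_i^n,t_{i+1}^n] \quad\text{for } 0\leq\mu\leq C.
\]
Since $\mu_n(t)=\tv{\alpha_n}(t)+\beta_n(t)$ is bounded in $n$ by some $B<\infty$ by hypothesis, Lemma~\ref{aspsin} gives $\overline c_{n,t}^\epsilon(C)\to 0$ for every $C\geq 0$, so the prefactor will be at most $2$ for $n$ large. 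Summing the recursion of Lemma~\ref{relun} then yields, for such $n$ and whenever $u_n(y,t,\lambda)\leq C$ on $[r,t]$,
\[
	u_n(r,t,\lambda) \;\leq\; \lambda + 2\,c_1(C)\,\mu_n(r,t] \;\leq\; \lambda + 2\,c_1(C)\,B.
\]

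To turn this self-referential bound into a genuine uniform estimate, I would introduce the stopping time $\sigma_n=\inf\{r\in[0,t]:u_n(y,t,\lambda)\leq C\text{ for all }y\in[r,t]\}$. Right-continuity of $u_n(\,\cdot\,,t,\lambda)$ at $t$, where it takes the value $\lambda<C$, forces $\sigma_n<t$, and applying the displayed bound at $r=\sigma_n$ gives $u_n(\sigma_n,t,\lambda)\leq\lambda+2c_1(C)B$. If $C$ can be chosen so that $\lambda+2c_1(C)B<C$ strictly, continuity lets us extend the interval past $\sigma_n$, forcing $\sigma_n=0$ and yielding $u_n(\,\cdot\,,t,\lambda)\leq C$ on $[0,t]$ for every $n$ large enough; the finitely many remaining $n$ contribute only bounded extra values since for each fixed $n$ the function $u_n(\,\cdot\,,t,\lambda)$ is finite.

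The hard part will be that $c_1(C)$ is a priori exponential in $C$ (driven by the extreme point $x=-1$ in its defining supremum), so the scalar condition $\lambda+2c_1(C)B<C$ cannot be solved globally when $B$ is large. The remedy will be to exploit that $\nu_{i,n}$ is actually supported on $[-1/n,\infty)$: writing $g(x,\mu)=h(x,\mu)-(\mu x)^2/(2(1+x^2))$, using the explicit formulas~\eqref{eq:g-h-Phi} together with the boundedness of $\Phi_1,\Phi_2$ on $[-1/n,\infty)$ for $n$ large, I would obtain a polynomial-in-$\mu$ refinement of the form $\lvert A_{i,n}(\mu)\rvert\leq K(1+\mu^2)\mu_n(t_i^n,t_{i+1}^n]$. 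Partitioning $[0,t]$ into subintervals of $\mu_n$-mass at most a small $\eta>0$---feasible because the limit measure of the bounded sequence $\mu_n$ has only countably many atoms, which can be absorbed as subdivision points---I would then iterate the stopping-time bootstrap on each subinterval and propagate an upper bound $\Lambda_{j+1}$ for $u_n(s_{j+1},t,\lambda)$ to a bound $\Lambda_j$ for $u_n(s_j,t,\lambda)$ satisfying $\Lambda_j\leq\Lambda_{j+1}+O((1+\Lambda_j^2)\eta)$. A discrete Gronwall-type argument, with $\eta$ chosen small enough, then controls the final $\Lambda_0$ in terms of $\lambda$ and $B$, giving $\overline c_{t,\lambda}^u<\infty$.

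For the uniform statement over $\lambda\in[0,1]$ and $s\in[0,t]$, I would simply note that $\lambda\mapsto u_n(s,t,\lambda)$ is nondecreasing as a Laplace exponent of the nonnegative variable $X_n(t)$, so $\overline c_{s,\lambda}^u\leq\overline c_{s,1}^u$ for every $\lambda\leq 1$; the bound from the first assertion applied at $s$ instead of $t$ depends only on $\mu_n(s)\leq\mu_n(t)\leq B$, hence is uniform in $s\in[0,t]$.
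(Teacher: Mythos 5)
Your opening analysis is on target: you recognize that the two-sided estimate~\eqref{eq:ineq-g-2}, applied with $C$ an a priori bound on $u_n$, produces a circular condition $\lambda+2c_1(C)B<C$ that cannot be solved when $B$ is large, because $c_1(C)$ grows with $C$. The monotonicity step for the uniform claim over $\lambda\leq 1$, $s\leq t$ is also fine and matches the paper.

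The gap is in your proposed remedy. Replacing $c_1(C)$ by a polynomial refinement $\lvert\psi_{i,n}(\mu)\rvert\leq K(1+\mu^2)\mu_n(t_i^n,t_{i+1}^n]$ and iterating over a partition of $\mu_n$-mass $\leq\eta$ per piece gives a recursion of Riccati type, $\Lambda_j\lesssim\Lambda_{j+1}+K(1+\Lambda_j^2)\eta$. Because the number of pieces scales like $B/\eta$, refining $\eta$ does not help: the recursion is a discretization of the backward Riccati flow $-\dot\Lambda=K(1+\Lambda^2)$ over total mass $B$, whose solution $\tan\bigl(K(B-\cdot)+\arctan\lambda\bigr)$ already explodes once $KB+\arctan\lambda\geq\pi/2$. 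So there is no choice of $C$ or $\eta$ closing the bootstrap for a general bounded sequence $(\mu_n(t))$, and a genuinely different idea is needed to handle the quadratic term. (The "boundedness of $\Phi_1,\Phi_2$ on $[-1/n,\infty)$" is also the wrong domain: their argument is $\mu x$, i.e. $[-\mu/n,\infty)$, so the relevant bound is again circular in $\mu$.)

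The idea you are missing is that the relevant estimate is \emph{one-sided and uniform in the Laplace argument}. From~\eqref{eq:g-h-Phi}, since $1-e^{-\mu x}\leq 1$ for all $x$ and $\Phi_1\geq 0$, one has $g(x,\mu)\leq x^2/(1+x^2)$ for every $\mu\geq 0$, with no dependence on $\mu$. Feeding this into the decomposition $\psi_{k-1,n}(a_k)=(1+\epsilon_{k-1,n}(a_k))\bigl(a_k\alpha_{k-1,n}+\int g(x,a_k)\nu_{k-1,n}(dx)\bigr)$, together with the control $1+\epsilon\in[0,2]$ from Lemma~\ref{aspsin}, yields $\psi_{k-1,n}(a_k)\leq 2(a_k+2)\mu_n(t_{k-1}^n,t_k^n]$: an \emph{affine} upper bound with constants independent of any a priori cap $C$ on $a_k$. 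The backward recursion of Lemma~\ref{relun} is then a linear discrete Gronwall inequality, which closes without any smallness condition and produces the explicit bound $\overline c_{t,\lambda}^u\leq(\lambda+2)(1+B_t)e^{B_t}$. Discarding the sign information (by taking a modulus of $g$ or of $\psi$) is exactly what turns the linear problem into the unclosable Riccati one.
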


\begin{proof}
	In the rest of the proof fix $t$ and $\lambda \geq 0$, define $B_t = 2\sup_{n \geq 1} \mu_n(t)$, which is finite by assumption, and $C_{t,\lambda} = (\lambda + 2)(1 + B_t) e^{B_t}$. Following Lemma~\ref{aspsin} choose $n_{t,\lambda} \geq 1$ such that $\overline c_{n,t}^\epsilon(C_{t,\lambda}) \leq 1$ for all $n \geq n_{t, \lambda}$. Since $Z_{i,n}$ is finite for each $i \geq 0$ and $n \geq 1$, it follows that $\sup_{0 \leq s \leq t} u_n(s,t,\lambda)$ is finite for each $n \geq 1$. Thus to prove the result, it is enough to prove that
	\[ \sup \left \{ u_n(s,t,\lambda): n \geq n_{t,\lambda}, 0 \leq s \leq t \right \} = \sup \left \{ u_n(t_i^n,t_{\gamma_n(t)}^n,\lambda): n \geq n_{t,\lambda}, 0 \leq i \leq \gamma_n(t) \right \} \]
	is finite. In the rest of the proof fix $n \geq n_{t,\lambda}$ and define $a_i = u_n(t_i^n,t_{\gamma_n(t)}^n,\lambda)$. We prove by backwards induction on $i$ that $a_i \leq C_{t,\lambda}$ for all $0 \leq i \leq \gamma_n(t)$, and since the bound does not depend on $n$ or $i$ this will show the result. We have $a_{\gamma_n(t)} = \lambda \leq C_{t,\lambda}$ so the initialization is satisfied. Now consider some $1 \leq i < \gamma_n(t)$ and assume that $a_k \leq C_{t,\lambda}$ for all $i \leq k \leq \gamma_n(t)$: we prove that $a_{i-1} \leq C_{t, \lambda}$.
	
	Fix some $i < k \leq \gamma_n(t)$. By definition, we have
	\[ \psi_{k-1,n}(a_k) = (1+\epsilon_{k-1,n}(a_k)) \left( a_k \alpha_{k-1,n} + \int g(x, a_k) \nu_{k-1,n} (dx) \right). \]
	
	By induction hypothesis, it holds that $a_k \leq C_{t,\lambda}$. Combined with $\overline c_{n,t}^\epsilon(C_{t,\lambda}) \leq 1$, this gives $0 \leq 1 + \epsilon_{k-1,n}(a_k) \leq 2$. Together with the inequality $g(x,y) \leq x^2 / (1+x^2)$ (note that $\Phi_1 \geq 0$), we finally get
	\[
		\psi_{k-1,n}(a_k) \leq (1+\epsilon_{k-1,n}(a_k)) \left( a_k \lvert\alpha_{k-1,n}\rvert + 2 \beta_{k-1,n} \right) \leq 2 (a_k+2) \mu_n(t_{k-1}^n, t_{k}^n].
	\]
	
	Hence for any $i-1 \leq j \leq \gamma_n(t)$, this gives together with Lemma~\ref{relun} for the first equality
	\[
		a_{j} = \lambda + \sum_{k=j+1}^{\gamma_n(t)} \psi_{k-1,n}(a_k) \leq \lambda + \sum_{k=j+1}^{\gamma_n(t)} 2 (a_k+2) \mu_n(t_{k-1}^n, t_{k}^n].
	\]
	
	This can be rewritten $a'_j \leq A + S_{j+1}$ if $a_k' = a_k + 2$, $A = \lambda + 2$, $d_k = 2 \mu_n(t_{k-1}^n, t_{k}^n]$ and $S_k = d_k a_k' + \cdots + d_{\gamma_n(t)} a_{\gamma_n(t)}'$. This gives for $j = i-1$
	\[ a'_{i-1} \leq A + S_i = A + d_{i} a'_{i} + S_{i+1} \leq A + d_{i} (A + S_{i+1}) + S_{i+1} = (1+d_{i})(A + S_{i+1}). \]
	
	Then by induction one gets
	\[ a'_{i-1} \leq (1+d_{i}) \cdots (1+d_{\gamma_n(t)-1}) (A + S_{\gamma_n(t)}) \leq \exp \left(d_{1} + \cdots + d_{\gamma_n(t)} \right) (A + d_{\gamma_n(t)} a_{\gamma_n(t)}'). \]
	
	Since $a'_{\gamma_n(t)} = A = \lambda + 2$ and $d_{\gamma_n(t)} \leq d_1 + \cdots + d_{\gamma_n(t)} = 2 \mu_n(t) \leq B_t$, this shows that $a_{i-1} \leq C_{t, \lambda}$ which achieves the proof of the induction and shows that $\overline c^u_{t,\lambda} \leq C_{t, \lambda}$. This gives the finiteness of $\overline c^u_{t,\lambda}$, and since $C_{t, \lambda}$ is increasing in both $t$ and $\lambda$, for any $s \leq t$ and $\lambda \leq 1$ we obtain $\overline c^u_{s, \lambda} \leq C_{t, 1}$ which gives the second part of the lemma.
\end{proof}

\section{Proof of Lemma~\ref{reste}} \label{appendix:proof-lemma}

This appendix is devoted to the proof of Lemma~\ref{reste}. Recall the function $\mu = \tv{\alpha} + \beta$ defined at the beginning of Section~\ref{proofThm1}. We will use the following simple result.

\begin{Lem} \label{decompmesure}
	For any $\varepsilon > 0$ and $0 \leq s < t$, there exists a partition of the interval $(s,t]$ as
	\[ (s,t] = \left( \bigcup_{j = 1}^J (a_j,b_j] \right) \cup \left( \bigcup_{k = 1}^K (a'_k,b'_{k}] \right) \]
	such that $\{ b'_k, 1 \leq k \leq K \} = (s,t] \cap \{ v \geq 0: \Delta \mu(v) \geq \varepsilon \}$, $\mu(a_j, b_j] \leq \varepsilon$ for each $1 \leq j \leq J$ and $\mu(a'_k, b_k') \leq \varepsilon / K$ for each $1 \leq k \leq K$.
\end{Lem}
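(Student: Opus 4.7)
The plan is elementary and uses only that $\mu=\tv{\alpha}+\beta$ is a non-decreasing, locally finite, c\`adl\`ag function on $[0,\infty)$. I build the partition in three steps.

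First, since $\mu(t)<+\infty$, the set $(s,t]\cap\{v:\Delta\mu(v)\geq\varepsilon\}$ contains at most $\mu(t)/\varepsilon$ points; enumerate them in increasing order as $v_1<\cdots<v_K$ and set $b'_k=v_k$ (if $K=0$, the second family is empty and one proceeds directly to the third step below). Second, since $\mu$ is right-continuous, $\mu(a,v_k)=\mu(v_k-)-\mu(a)\to 0$ as $a\uparrow v_k$, so I can choose $a'_k\in(s\vee v_{k-1},v_k)$ (with the convention $v_0:=s$) such that $\mu(a'_k,b'_k)\leq\varepsilon/K$. Picking the $a'_k$ one after the other yields $K$ pairwise disjoint intervals $(a'_k,b'_k]\subset(s,t]$. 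The remainder $(s,t]\setminus\bigcup_k(a'_k,b'_k]$ is then a finite disjoint union of half-open intervals of the form $(c,d]$ whose endpoints belong to $\{s,t\}\cup\{a'_k,b'_k\}$, and on each such $(c,d]$ every jump of $\mu$ has size strictly less than $\varepsilon$ by construction.

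Third, I partition each such $(c,d]$ into pieces $(a_j,b_j]$ of $\mu$-mass at most $\varepsilon$. The key observation is that $(c,d)$ contains at most $2\mu(c,d)/\varepsilon$ points $w$ with $\Delta\mu(w)\geq\varepsilon/2$; I use these "intermediate" jump points as additional cut points, so that between consecutive cut points every jump of $\mu$ has size strictly less than $\varepsilon/2$. On such a sub-sub-interval $(p,q]$, the greedy rule $c_0=p$, $c_{i+1}=\inf\{y\in(c_i,q]:\mu(c_i,y]>\varepsilon/2\}\wedge q$ produces pieces of mass in $[\varepsilon/2,\varepsilon)$, so it terminates after finitely many steps; at the right endpoint, a small final adjustment (again based on right-continuity of $\mu$ applied to $\mu(\,\cdot\,,q-)$) ensures that the piece absorbing the possible intermediate-size jump at $q$ still has total mass $\leq\varepsilon$.

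The main obstacle is this third step: a single jump of size between $\varepsilon/2$ and $\varepsilon$ inside $(c,d]$ would make the naive greedy procedure with threshold $\varepsilon$ produce a piece of mass up to $2\varepsilon$. Isolating these intermediate jumps as cut points, and only then running a greedy procedure with threshold $\varepsilon/2$ on the remaining small-jump pieces, is the device that brings the bound down to $\varepsilon$ on every piece.
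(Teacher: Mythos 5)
The paper does not give a proof of this lemma (it is introduced only as a ``simple result''), so there is no author argument to compare against; the review below concerns the proposal on its own terms. Your overall strategy is the natural one and it is correct. Steps 1 and 2 are fine: since $\mu=\tv{\alpha}+\beta$ is increasing and finite on $[0,t]$, there are at most $\mu(s,t]/\varepsilon$ points in $(s,t]$ with $\Delta\mu\geq\varepsilon$, and the existence of left limits (not right-continuity, as you write, though both are part of the c\`adl\`ag hypothesis) lets you choose $a'_k\in(s\vee v_{k-1},v_k)$ with $\mu(a'_k,v_k)\leq\varepsilon/K$; all jumps of $\mu$ on each remainder piece $(c,d]$ are then $<\varepsilon$, since those pieces avoid all of $v_1,\ldots,v_K$.

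For step 3 you correctly identify the one real difficulty: a na\"ive left-to-right greedy with threshold $\varepsilon$ can return a piece of mass close to $2\varepsilon$, because the cut point $c_{i+1}$ may carry a jump of size close to $\varepsilon$ on top of the mass $\mu(c_i,c_{i+1})\leq\varepsilon$. Your two-pass device (use the finitely many points of $(c,d)$ with $\Delta\mu\geq\varepsilon/2$ as mandatory cut points, run the $\varepsilon/2$-greedy in between, and finally insert one more cut just to the left of the right endpoint $q$ of each sub-piece, needed in case $\Delta\mu(q)\in[\varepsilon/2,\varepsilon)$) does yield $\mu(a_j,b_j]\leq\varepsilon$ on every resulting piece; I checked the bookkeeping in all cases. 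One small imprecision: the greedy does not produce ``pieces of mass in $[\varepsilon/2,\varepsilon)$'' uniformly. The terminal piece $(c_I,q]$ of a sub-sub-interval can have mass below $\varepsilon/2$, or, before your adjustment, as large as $\varepsilon/2+\Delta\mu(q)$. Only the lower bound $\varepsilon/2$ on the \emph{non-terminal} pieces is needed for finite termination, and the adjustment controls the upper bound on the last two pieces, so the conclusion stands. As in step 2, that final adjustment again rests on the existence of the left limit $\mu(q^-)$ rather than on right-continuity.
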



In the rest of the proof fix $t, \lambda > 0$, $\bottleneck(t) < s \leq t$, $(\ell_n)$ a sequence converging to $\lambda$ and let $u_n(y) = u_n(y,t,\ell_n)$. With this notation, we have
\[ R_n(y) = \left\vert \Psi_{n}(u_n) ((y,t]) - \Psi(u_n)((y,t]) \right\vert, \ 0 \leq y \leq t. \]

Let $\ell = \inf_{n \geq 1} \ell_n$ and $L = \sup_{n \geq 1} \ell_n$ and assume without loss of generality, since $\ell_n \to \lambda > 0$, that $\ell > 0$. We first show that $R_n(s) \to 0$, the fact that $\sup \{ R_n(y): s \leq y \leq t, n \geq 1 \}$ is finite is proved in Section~\ref{subsub:last}. From the definitions of $\Psi$ and $\Psi_n$ one can write
\[ \left\vert \Psi_{n}(u_n) ((s,t]) - \Psi(u_n)((s,t]) \right\vert \leq B_n^{\alpha} + B_n^{\beta} + B_n^{\nu} + B_n^{\epsilon} \]
with
\[ B_n^{\alpha} = \left\vert \int_{(s,t]} u_n d \alpha_n - \int_{(s,t]} u_n d \alpha \right\vert, \quad B_n^{\beta} = \left\vert \int_{(s,t]} u_n^2 d \beta_n - \int_{(s,t]} u_n^2 d \beta \right\vert, \]
\[ B_n^{\nu} = \left\vert \int_{[-1/n,\infty) \times (s,t]} h(x,u_n(y)) \nu_n(dx \, dy) - \int_{(0,\infty) \times (s,t]} h(x,u_n(y)) \nu(dx \, dy)\right\vert \]
and
\[ B_n^{\epsilon} = \sum_{i = \gamma_n(s)+1}^{\gamma_n(t)} \left \lvert \epsilon_{i-1,n}(u_n(t_i^n)) \right \rvert \left \lvert \int \left( 1-e^{-x u_n(t_i^n)} \right) \nu_{i-1,n}(dx) \right \rvert. \]

We will show that each sequence $(B_n^\alpha)$, $(B_n^\beta)$, $(B_n^\nu)$ and $(B_n^\epsilon)$ goes to $0$ as $n$ goes to infinity. By~\eqref{eq:ineq-g-2} and by definition of the constants $\overline c_{n,t}^\epsilon$, $\overline c_{t,L}^u$ and $c_1$, one can derive similarly as in the proof of~\eqref{eq:variation}
\[ B_n^\epsilon \leq \overline c_{n,t}^\epsilon ( \overline c_{t, \ell_n}^u ) c_1(\overline c_{t,\ell_n}^u) \mu_n(t) \leq \overline c_{n,t}^\epsilon ( \overline c_{t, L}^u ) c_1(\overline c_{t,L}^u) \mu_n(t) \]
where the last inequality follows from the fact that $\ell_n \leq L$ and that the functions $\overline c_{n,t}^\epsilon(C)$ and $\overline c_{t,y}^u$ are increasing in $C$ and $y$, respectively. From now on, we will use such monotonicity properties without further comment. This last upper bound is seen to go $0$, invoking~\eqref{eq:conv-mu} and Lemmas~\ref{aspsin} and~\ref{bounded}. Thus the sequence $(B_n^\epsilon)$ goes to $0$ and we have to control the three other sequences $(B_n^\alpha)$, $(B_n^\beta)$ and $(B_n^\nu)$. We control the two first sequences in Section~\ref{subsub:alpha+beta} and the last one in Section~\ref{subsub:nu}

\subsection{Control of the sequences $(B_n^\alpha)$ and $(B_n^\beta)$} \label{subsub:alpha+beta}

We treat in detail the convergence of $(B_n^\alpha)$ to $0$. For $(B_n^\beta)$, one essentially needs to replace $\alpha$ by $\beta$ and $u_n$ by $u_n^2$, we mention along the way what modifications need to be done.

Fix $\varepsilon > 0$ and consider the partition $((a_j, b_j], 1 \leq j \leq J)$ and $((a_k', b_k'], 1 \leq k \leq K)$ of $(s,t]$ provided by Lemma~\ref{decompmesure}. Note that the partition depends on $s,t$ and $\varepsilon$ but not on $n$. We can write $B_n^{\alpha} \leq \sum_{j=1}^{J} B_{n,j}^{\alpha,1} + \sum_{k=1}^{K} ( B_{n,k}^{\alpha,2} + B_{n,k}^{\alpha,3})$ with
\[
	B_{n,j}^{\alpha,1} = \left\vert \int_{(a_{j},b_{j}]} u_n d \alpha_n - \int_{(a_{j},b_{j}]} u_n d \alpha \right\vert, \quad B_{n,k}^{\alpha,2} = \int_{(a'_{k},b'_{k})} u_n d \tv{\alpha_n} + \int_{(a'_{k},b'_{k})} u_n d \tv{\alpha}
\]
and $B_{n,k}^{\alpha,3} = u_n(b'_{k}) \big\lvert \alpha_{\gamma_n(b'_{k}),n} - \Delta \alpha(b'_k)\big \rvert $. For $B_{n,j}^{\alpha,1}$ we have
\begin{multline*}
	B_{n,j}^{\alpha,1} \leq \int_{(a_{j},b_{j}]} \left \lvert u_n(y) - u_n(b_{j}) \right \rvert \tv{\alpha_n}(dy) + \int_{(a_{j},b_{j}]} \left \lvert u_n(y) - u_n(b_{j}) \right \rvert \tv{\alpha}(dy)\\
	+ u_n(b_{j}) \left \lvert \alpha_n(a_{j}, b_{j}] - \alpha(a_{j}, b_{j}] \right \rvert.
\end{multline*}

By~\eqref{eq:variation}, $\left \lvert u_n(y) - u_n(b_{j}) \right \rvert \leq \Delta_{t, L}^u \mu_n(a_j,b_j]$ for all $y \in (a_j, b_j]$ and so, using also $u_n(b_{j}) \leq \overline c^u_{t, L}$, we get
\[
	B_{n,j}^{\alpha,1} \leq \Delta_{t, L}^u \mu_n(a_j,b_j] \big( \tv{\alpha_n} (a_j,b_j] + \tv{\alpha} (a_j,b_j] \big) + \overline c^u_{t, L} \left \lvert \alpha_n(a_{j}, b_{j}] - \alpha(a_{j}, b_{j}] \right \rvert.
\]

For $B_n^{\beta,1}$ one needs to use
\[ \left \lvert u_n(y)^2 - u_n(b_{j})^2 \right \rvert = \left \lvert u_n(y) - u_n(b_{j}) \right \rvert \left( u_n(y) + u_n(b_{j}) \right) \leq 2 \overline c_{t,L}^u \Delta_{t, L}^u \mu_n(a_j,b_j], \]
which leads to a similar upper bound. Since the partition does not depend on $n$, we have $\alpha_n(a_j, b_j]\to \alpha(a_j, b_j]$ and $\mu_n(a_j,b_j] \to \mu(a_j,b_j]$ by~\refAone, so that summing over $j = 1, \ldots, J$, letting $n$ go to infinity and using $\tv{\alpha}(A) \leq \mu(A)$ gives
\begin{equation} \label{eq:bound-1-1}
	\limsup_{n \to +\infty} \sum_{j=1}^J B_{n,j}^{\alpha,1} \leq 2 \Delta_{t, L}^u \sum_{j=1}^J \big( \mu(a_{j}, b_{j}] \big)^2 \leq 2 \varepsilon \Delta_{t, L}^u \mu(s,t],
\end{equation}
using also $\mu(a_j, b_j] \leq \varepsilon$, which holds by choice of the partition, to derive the second inequality. To upper bound $B_{n,k}^{\alpha,2}$ we write $B_{n,k}^{\alpha,2} \leq \overline c_{t, L}^u \left(\tv{\alpha_n}(a_k',b_k') + \tv{\alpha}(a_k',b_k')\right)$ which leads, using $\mu(a'_k, b'_k) \leq \varepsilon / K$, to
\begin{equation} \label{eq:bound-1-2}
	\limsup_{n \to +\infty} \sum_{k=1}^K B_{n,k}^{\alpha,2} \leq 2 \overline c_{t, L}^u \sum_{k=1}^K \mu(a'_k, b'_k) \leq 2 \varepsilon \overline c_{t, L}^u.
\end{equation}

For $B_{n,k}^{\beta,2}$ one can use $B_{n,k}^{\beta,2} \leq (\overline c_{t, L}^u)^2 \left(\beta_n(a_k',b_k') + \beta(a_k',b_k')\right)$ to obtain a similar upper bound. Finally, for $B_{n,k}^{\alpha,3}$ one has $B_{n,k}^{\alpha,3} \leq \overline c_{t, L}^u \vert \alpha_{\gamma_n(b'_k),n} - \Delta \alpha(b'_k) \vert$ which goes to $0$ by~\refAtwo. One can similarly write $B_{n,k}^{\beta,3} \leq (\overline c_{t, L}^u)^2 \vert \beta_{\gamma_n(b'_k),n} - \Delta \beta(b'_k) \vert$ for $B_{n,k}^{\beta, 3}$. Since $K$ does not depend on $n$ this gives $\sum_{k=1}^K B_{n,k}^{\alpha,3} \to 0$ and so~\eqref{eq:bound-1-1} and~\eqref{eq:bound-1-2} give
\[ \limsup_{n \to +\infty} B_n^{\alpha} \leq 2 \varepsilon \left( \Delta_{t, L}^u \mu(s,t] + \overline c_{t, L}^u \right). \]

Since $\varepsilon$ was arbitrary, letting $\varepsilon \to 0$ gives the result.

\subsection{Control of the sequence $(B_n^\nu)$} \label{subsub:nu}

For $T \geq 0$ we define the constant
\begin{equation}
\label{nvlconst}
c_4(T) = \sup \left\{ \left \lvert \frac{h(x,y)}{x^3 / (1+x^2)} \right \rvert : x \geq -1, 0 \leq y \leq T \right\} 
\end{equation}
which, starting from~\eqref{eq:g-h-Phi}, can be seen to be finite. For $d > 0$ we write
\begin{equation} \label{eq:bound-3}
	B_n^{\nu} \leq \widetilde B_n^{\nu} + \hat{B}_n^{\nu} + \check B_n^{\nu}
\end{equation}
with
\[ \widetilde B_n^{\nu} = \left\vert \int_{[d,\infty) \times (s,t]} h(x,u_n(y)) \nu_n (dx \, dy) - \int_{[d,\infty) \times (s,t]} h(x,u_n(y)) \nu (dx \, dy)\right\vert. \]
$$ \hat B_n^{\nu}= \int_{[-1/n,d) \times (s,t]} \lvert h(x,u_n(y)) \rvert \nu_n (dx \, dy), \quad \check B_n^{\nu}= \int_{(0,d) \times (s,t]} \lvert h(x,u_n(y)) \rvert \nu (dx \, dy). $$	

Note that $\widetilde B_n^{\nu}$ depends on $d$ but, similarly as $t$ or $\lambda$, we do not reflect this in the notation because $d$ will be fixed once and for all shortly. Bounding the two last terms thanks to~\eqref{nvlconst}, we have
\[ B_n^{\nu} \leq \widetilde B_n^{\nu} + c_4(\overline c_{t,L}^u) \left( \int_{(0,d) \times (0,t]} \frac{x^3}{1+x^2} \nu (dx \, dy) + \int_{[-1/n, d) \times (0,t]} \frac{\lvert x \rvert ^3}{1+x^2} \nu_{n}(dx \, dy)\right). \]

Since $\int_{(0,\infty) \times (0,t]} (1 \wedge x^2) \nu(dx \, dy)$ is finite, we have $\int_{(0,d) \times (0,t]} \frac{x^3}{1+x^2} \nu (dx \, dy) \to 0$ as $d \to 0$. Moreover, proceeding similarly as for the proof of~\eqref{eq:d}, we can show that
\[ \lim_{d \to 0} \limsup_{ n \to +\infty} \int_{[-1/n, d) \times (0,t]} \frac{\lvert x \rvert ^3}{1+x^2} \nu_n(dx \, dy) = 0. \]

Thus letting first $n \to +\infty$ and then $d \to 0$, we obtain
\[ \limsup_{n \to +\infty} B_n^{\nu} \leq \lim_{d \to 0} \limsup_{n \to +\infty} \widetilde B_n^{\nu}. \]

Hence to prove $B_n^{\nu} \to 0$ we only have to show that $\widetilde B_n^{\nu} \to 0$ for every $d > 0$. So in the rest of this step we fix an arbitrary $d > 0$ and show that $\widetilde B_n^\nu \to 0$. Fix $\varepsilon > 0$ and consider the partition $((a_j, b_j], 1 \leq j \leq J)$ and $((a_k', b_k'], 1 \leq k \leq K)$ of $(s,t]$ given by Lemma~\ref{decompmesure}, which does not depend on $n$. Then we can write $\widetilde B_n^{\nu} \leq \sum_{j=1}^J \widetilde B_{n,j}^{\nu,1} + \sum_{k=1}^K ( \widetilde B_{n,k}^{\nu,2} + \widetilde B_{n,k}^{\nu,3})$ with
\begin{align*}
	\widetilde B_{n,j}^{\nu,1} & = \left\vert \int_{[d,\infty) \times (a_j, b_j]} h(x,u_n(y)) \nu_{n}(dx \, dy) - \int_{[d,\infty) \times (a_j, b_j]} h(x,u_n(y)) \nu (dx \, dy) \right\vert, \\
\widetilde B_{n,k}^{\nu,2} & = \int_{[d,\infty) \times (a_k, b_k')} \lvert h(x,u_n(y)) \rvert \nu_{n}(dx \, dy) + \int_{[d,\infty) \times (a_k, b_k')} \lvert h(x,u_n(y)) \rvert \nu (dx \, dy)
\end{align*}
and
\[
	\widetilde B_{n,k}^{\nu,3} = \left \lvert \int_{[d,\infty)} h(x,u_n(b'_k)) \nu_{\gamma_n(b'_k),n}(dx) - \int_{[d,\infty) \times \{b'_k\}} h(x,u_n(b'_k)) \nu (dx \, dy) \right \rvert.
\]

Further we write $\widetilde B_{n,j}^{\nu,1} \leq \widetilde B_{n,j}^{\nu,4} + \widetilde B_{n,j}^{\nu,5}$ with
\begin{multline*}
	\widetilde B_{n,j}^{\nu,4} = \int_{[d,\infty) \times (a_j, b_j]} \left \lvert h(x,u_n(y)) - h(x,u_n(b_j)) \right \rvert \nu_{n}(dx \, dy)\\
	+ \int_{[d,\infty) \times (a_j, b_j]} \left \lvert h(x,u_n(y)) - h(x,u_n(b_j)) \right \rvert \nu (dx \, dy)
\end{multline*}
and
\[ \widetilde B_{n,j}^{\nu,5} = \left\vert \int_{[d,\infty) \times (a_j, b_j]} h(x,u_n(b_j)) \nu_{n}(dx \, dy) - \int_{[d,\infty) \times (a_j, b_j]} h(x,u_n(b_j)) \nu (dx \, dy) \right\vert. \]
We derive, in order, upper bounds on $\widetilde B_{n,k}^{\nu,2}$, $\widetilde B_{n,j}^{\nu,4}$, $\widetilde B_{n,k}^{\nu,3}$ and finally on $\widetilde B_{n,j}^{\nu,5}$.
\\

To control $\widetilde B_{n,k}^{\nu,2}$ we introduce the constant
\[
	c_5(T) = \sup \left\{ \frac{\lvert h(x, y) \rvert }{x^2 / (1+x^2)}: 0 \leq y \leq T, x \geq 0 \right\}
\]
which can be seen to be finite, starting from instance from~\eqref{eq:g-h-Phi}. Thus
\begin{align*}
	\widetilde B_{n,k}^{\nu,2} & \leq c_5(\overline c_{t,L}^u) \left( \int_{[d,\infty) \times (a'_k, b'_k)} \frac{x^2}{1+x^2} \nu_{n}(dx \, dy) + \int_{[d,\infty) \times (a_k, b_k')} \frac{x^2}{1+x^2} \nu (dx \, dy) \right)\\
	& \leq 2 c_5(\overline c_{t,L}^u) \left( \beta_{n}(a'_k, b'_k) + \beta(a'_k, b'_k) \right)
\end{align*}
using~\eqref{eq:bound-proof-thm} for the last inequality. Using $\beta_n(a'_k, b'_k) \to \beta(a'_k, b'_k) \leq \mu(a'_k, b'_k)\leq \varepsilon / K$ (the convergence $\beta_n(a'_k, b'_k) \to \beta(a'_k, b'_k)$ comes from Assumptions~\refAone\ and~\refAtwo\ by writing $\beta_n(a'_k, b'_k) = \beta_n(a'_k, b'_k] - \Delta \beta_n(b'_k)$ and observing that $b'_k$ is by construction an atom of $\mu$), this leads to
\begin{equation} \label{eq:bound-nu-2}
	\limsup_{n \to +\infty} \sum_{k=1}^K \widetilde B_{n,k}^{\nu,2} \leq 4 \varepsilon c_5(\overline c_{t,L}^u).
\end{equation}

To derive an upper bound on $\widetilde B_{n,j}^{\nu,4}$, we use the constant $c_2(\eta, T)$ defined in~\eqref{eq:c2+c3}. Since $0 < \underline c_{s, t, \ell}^u \leq u_n(y) \leq \overline c_{t, L}^u$ for $n \geq N_{s,t,\ell}$ and $a_j < y \leq b_j$, we have for such $n$
\begin{multline*}
	\int_{[d,\infty) \times (a_j, b_j]} \left \lvert h(x,u_n(y)) - h(x,u_n(b_j)) \right \rvert \nu_{n}(dx \, dy)\\
	\leq c_2 \left(\underline c_{s, t, \ell}^u, \overline c_{t, L}^u \right) \int_{[d,\infty) \times (a_j, b_j]} \left \lvert u_n(y) - u_n(b_j) \right \rvert \frac{x^2}{1+x^2} \nu_{n}(dx \, dy).
\end{multline*}

Since $\lvert u_n(y) - u_n(b_j) \rvert \leq \Delta_{t,L}^u \, \mu_n(a_j, b_j]$ for $a_j < y \leq b_j$ by~\eqref{eq:variation}, we obtain
\begin{multline*}
	\int_{[d,\infty) \times (a_j, b_j]} \left \lvert h(x,u_n(y)) - h(x,u_n(b_j)) \right \rvert \nu_{n}(dx \, dy)\\
	\leq c_2 \left(\underline c_{s, t, \ell}^u, \overline c_{t, L}^u \right) \Delta_{t,L}^u \, \mu_n(a_j, b_j] \int_{[d,\infty) \times (a_j, b_j]} \frac{x^2}{1+x^2} \nu_{n}(dx \, dy).
\end{multline*}

Since
\begin{equation} \label{eq:aa}
	\int_{[d,\infty) \times (a_j, b_j]} \frac{x^2}{1+x^2} \nu_{n}(dx \, dy) \leq 2 \beta_n(a_j, b_j] \leq 2 \mu_n(a_j, b_j],
\end{equation}
we finally get
\[
	\int_{[d,\infty) \times (a_j, b_j]} \left \lvert h(x,u_n(y)) - h(x,u_n(b_j)) \right \rvert \nu_{n}(dx \, dy) \leq C_{s,t,\ell,L} (\mu_n(a_j, b_j])^2
\]
with $C_{s,t,\ell,L} = 2 c_2(\underline c_{s, t, \ell}^u, \overline c_{t, L}^u) \Delta_{t,L}^u$. The exact same reasoning with $\nu$ instead of $\nu_n$, using the inequality~\eqref{eq:bound-proof-thm} instead of~\eqref{eq:aa}, leads to
\[ \widetilde B_{n,j}^{\nu,4} \leq C_{s,t,\ell,L} \left[ (\mu_n(a_j,b_j])^2 + (\mu(a_j,b_j])^2 \right]. \]

Hence~\eqref{eq:conv-mu} gives
\begin{equation} \label{eq:bound-nu-1}
	\limsup_{n \to +\infty} \sum_{j=1}^J \widetilde B_{n,j}^{\nu,4} \leq 2 C_{s,t,\ell,L} \sum_{j=1}^J (\mu(a_j,b_j])^2 \leq 2 \varepsilon C_{s,t,\ell,L} \mu(s,t]
\end{equation}
using $\mu(a_j,b_j] \leq \varepsilon$ to get the second inequality.
\\

The arguments to control $\widetilde B_{n,k}^{\nu,3}$ and $\widetilde B_{n,j}^{\nu,5}$ are very similar: we treat the case $\widetilde B_{n,j}^{\nu,5}$ in detail and mention necessary changes needed for $\widetilde B_{n,k}^{\nu,3}$. We need the constant $c_6$
\begin{equation} \label{eq:c5}
	c_6(T) = \sup_{\substack{0 \leq y \leq T \\ 0 \leq x, x'}} \left \lvert \frac{h(x,y) - h(x',y)}{x-x'} \right \rvert
\end{equation}
which is finite because
\[ \frac{\partial h}{\partial x}(x,y) = y e^{-xy} + y \frac{x^2 + xy - 1}{(1+x^2)^2} \]
and so for $x, x' \geq 0$ and $0 \leq y \leq T$,
\[ \left \lvert \frac{\partial h}{\partial x}(x,y) \right \rvert \leq T + T \sup_{v \geq 0} \left( \frac{v^2 + Tv + 1}{(1+v^2)^2} \right). \]

Let $\pi_{n,j}$ be the signed measure defined for $A \in \Bcal$ by
\[ \pi_{n,j}(A) = \nu_{n}(A \times (a_j,b_j]) - \nu (A \times (a_j,b_j]). \]

For $\widetilde B_{n,k}^{\nu,3}$ one needs to consider the measure $\pi_{n,k}$ defined similarly but with $A \times \{b'_k\}$ instead of $A \times (a_j, b_j]$. With this notation we have
\[
	\widetilde B_{n,j}^{\nu,5} \leq \sup_{0 \leq y \leq \overline c_{t, L}^u} \left\vert \int_{[d,\infty)} h(x,y) \pi_{n,j}(dx) \right\vert.
\]

Fix $Y, \eta > 0$ and consider a subdivision $d = \tau_1 < \cdots < \tau_N < \tau_{N+1} = \infty$ with the following three properties: $(1)$ $\tau_{\ell+1} - \tau_\ell \leq \eta$ for all $1 \leq \ell < N$; $(2)$ $\tau_N = Y$; and $(3)$ $\nu(\{\tau_\ell\} \times (a_j, b_j]) = 0$ for all $1 \leq \ell \leq N$. For $\widetilde B_{n,k}^{\nu,3}$ the third condition should be $\nu(\{\tau_\ell\} \times \{b'_\ell\}) = 0$ for all $1 \leq \ell \leq N$. Then for any $y \geq 0$,
\begin{multline*}
	\left\vert \int_{[d,\infty)} h(x,y) \pi_{n,j}(dx) \right\vert \leq \sum_{\ell=1}^{N-1} \int_{[\tau_\ell,\tau_{\ell+1})} \lvert h(x,y) - h(\tau_\ell,y) \rvert \tv{\pi_{n,j}}(dx)\\
	+ \int_{[Y,\infty)} \lvert h(x,y) - h(Y,y) \rvert \tv{\pi_{n,j}} (dx) + \sum_{\ell=1}^N \lvert h(\tau_\ell,y) \rvert \left\vert \pi_{n,j}([\tau_\ell, \tau_{\ell+1})) \right\vert.
\end{multline*}

By choice of the partition $(\tau_\ell)$ and by definition~\eqref{eq:c5} of $c_6$, we have for any $y \leq \overline c^u_{t, L}$
\begin{align*}
	\sum_{\ell=1}^{N-1} \int_{[\tau_\ell, \tau_{\ell+1})} \lvert h(x,y) - h(\tau_\ell,y) \rvert \tv{\pi_{n,j}}(dx) & \leq c_6(\overline c^u_{t, L}) \sum_{\ell=1}^{N-1} \int_{[\tau_\ell, \tau_{\ell+1})} \lvert x - \tau_\ell \rvert \tv{\pi_{n,j}}(dx)\\
	& \leq \eta c_6(\overline c_{t, L}^u) \tv{\pi_{n,j}}([d,\infty)).
\end{align*}

Thus introducing the constant
\[ \overline c_{t,L}^h = \sup \left\{ \lvert h(x,y) \rvert : x \geq 0, 0 \leq y \leq \overline c_{t,L}^u \right\} \]
which in view of~\eqref{eq:g-h-Phi} can be seen to be finite, one gets for any $y \leq \overline c^u_{t, L}$,
\begin{multline*}
	\left\vert \int_{[d,\infty)} h(x,y) \pi_{n,j}(dx) \right\vert \leq \eta c_6(\overline c_{t, L}^u) \tv{\pi_{n,j}}([d,\infty)) + 2 \overline c_{t,L}^h \tv{\pi_{n,j}}([Y,\infty))\\
	+ \overline c_{t,L}^h \sum_{\ell=1}^N\left\vert \pi_{n,j}([\tau_\ell, \tau_{\ell+1})) \right\vert.
\end{multline*}

Since no $(\tau_\ell)$ is an atom of the measure $\int_{\, \cdot \, \times (a_j, b_j]} \nu(dx \, dy)$, it follows from~\refAone\ that $\pi_{n,j}([\tau_\ell, \tau_{\ell+1})) \to 0$ as $n$ goes to infinity for each $\ell$. Moreover, one has
\[ \lvert \pi_{n,j}(A) \rvert \leq \nu_{n}(A \times (a_j,b_j]) + \nu (A \times (a_j,b_j]) \]
and finally, for any $\eta > 0$ we have, using also the fact that $\limsup_{n\rightarrow\infty}\tv{\pi_{n,j}}([c,\infty)) \leq 2 \nu([c,\infty) \times (a_j, b_j])$ for any $c \geq 0$,
\begin{multline*}
	\limsup_{n \to +\infty} \sup_{0 \leq y \leq \overline c_{t, L}^u} \left\vert \int_{[d,\infty)} h(x,y) \pi_{n,j}(dx) \right\vert \leq 2 \eta c_6 (\overline c_{t,L}^u) \nu ([d,\infty) \times (a_j,b_j])\\
	+ 4 \overline c_{t,L}^h \nu([Y, \infty) \times (a_j, b_j]).
\end{multline*}

Thus letting $\eta \to 0$ and $Y \to +\infty$ finally shows that $\widetilde B_{n,j}^{\nu,5} \to 0$ for each $1 \leq j \leq J$ and also $\widetilde B_{n,k}^{\nu,3} \to 0$ for each $1 \leq k \leq K$. Hence combining~\eqref{eq:bound-nu-2} and~\eqref{eq:bound-nu-1} finally gives
\[ \limsup_{n \to +\infty} \widetilde B_n^\nu \leq \varepsilon \left[ c_5(\overline c_{t,L}^u) + 2 C_{s,t,\lambda} \mu(s,t] \right] \]
and since $\varepsilon$ is arbitrary, letting $\varepsilon \to 0$ achieves to prove that $R_n(s) \to 0$.

\subsection{Boundedness of $(R_n(y))$} \label{subsub:last}

We now complete the proof of the lemma by showing that $\sup \{ R_n(y): 0 \leq y \leq t, n \geq 1\}$ is finite. We have $R_n(y) \leq \left \lvert \Psi_n(u_n)((y,t]) \right \rvert + \left \lvert \Psi(u_n)((y,t]) \right \rvert$, so that it is enough to prove that
\begin{equation} \label{eq:sup-Psi-n}
	\sup \left\{ \left \lvert \Psi_n(u_n)((y,t]) \right \rvert: 0 \leq y \leq t, n \geq 1 \right\} < +\infty
\end{equation}
and similarly with $\Psi$ instead of $\Psi_n$. Using~\eqref{eq:dynamics-u-n} for the first equality and~\eqref{eq:variation} for the second inequality, we get for any $0 \leq y \leq t$
\[ \left \lvert \Psi_n(u_n)((y,t]) \right \rvert = \left \lvert u_n(y) - u_n(t) \right \rvert \leq \Delta_{t,L}^u \, \mu_n (y,t] \leq \Delta_{t,L}^u \, \sup_{n \geq 1} \mu_n (t) \]
so that~\eqref{eq:sup-Psi-n} holds. On the other hand, starting from the definition of $\Psi$ we get
\begin{align*}
	\left \lvert \Psi(u_n)((y,t]) \right \rvert & \leq \int_{(s,t]} \lvert u_n \rvert d\tv{\alpha} + \int_{(s,t]} u_n^2 d\beta + \int_{(0,\infty) \times (s,t]} \lvert h(x, u_n(y)) \rvert \nu(dx \, dy)\\
	& \leq \overline c_{t,L}^u \tv{\alpha} (t) + (\overline c_{t,L}^u)^2 \beta (t) + c_5(\overline c_{t, L}^u) \int_{(0,\infty) \times (0,t]} \frac{x^2}{1+x^2} \nu(dx \, dy)
\end{align*}
which ends the proof of the lemma, since this upper bound is finite (invoking~\eqref{eq:bound-proof-thm} for the finiteness of the integral term).

\bibliographystyle{plain}

\end{document}